\theoremstyle{plain}
\newtheorem{theorem}{Theorem}
\newtheorem{corollary}{Corollary}
\newtheorem{lemma}{Lemma}
\newtheorem{proposition}{Proposition}
\theoremstyle{definition}
\newtheorem{definition}{Definition}
\theoremstyle{remark}
\newtheorem{example}{Example}
\newtheorem{remark}{Remark}
\newcommand{\rav}{\stackrel{\triangle}{=}}
\newcommand{\ravref}[1]{\stackrel{(\ref{#1})}{=}}
\newcommand{\epsi}{\varepsilon}
\newcommand{\rtw}{\mathrm{w}}
\newcommand{\rty}{\mathrm{y}}
\newcommand{\hjk}[1]{{\hat{#1}}}
\newcommand{\newcorr}[1]{{#1}}
\newcommand{\corr}[1]{{#1}}
\newcommand{\doc}{{\em{Proof\ }}}
\newcommand{\bo}{\hfill {$\Box$}}
\newcommand{\lscinf}[1]{\mathop{{#1}\textrm{-inf}}}
\newcommand{\lscsup}[1]{\mathop{{#1}\textrm{-sup}}}
\newcommand{\lscextr}[1]{\mathop{{#1}\textrm{-extr}}}
\DeclareMathOperator*{\meas}{meas}
\DeclareMathOperator*{\dist}{dist}
\DeclareMathOperator*{\as}{\textrm{home}}
\DeclareMathOperator*{\Limsup}{Limsup}
\DeclareMathOperator*{\Liminf}{Liminf}
\DeclareMathOperator*{\elim}{e-lim}
\DeclareMathOperator*{\elimsup}{e-limsup}
\DeclareMathOperator*{\eliminf}{e-liminf}
\DeclareMathOperator*{\epi}{epi}
\DeclareMathOperator*{\co}{co}
\DeclareMathOperator*{\cl}{cl}
\DeclareMathOperator*{\lsc}{lsc}
\DeclareMathOperator*{\brd}{brd}
\DeclareMathOperator*{\internary}{int}
\DeclareMathOperator*{\sign}{sign}
\DeclareMathOperator*{\Gr}{gph}
\begin{document}
	
	\title{Necessary Conditions\\ In Infinite-Horizon Control Problems\\ That Need No Asymptotic Assumptions\thanks{Krasovskii Institute of Mathematics and Mechanics, 16 S.Kovalevskaja St., Yekaterinburg, Russia, {\it khlopin@imm.uran.ru}}
	}

	
	\author{Dmitry Khlopin
	}\maketitle
\begin{abstract}
		We consider an infinite-horizon optimal control problem with an asymptotic terminal constraint. 
		For the the weakly overtaking criterion and  the overtaking criterion,  necessary boundary conditions on co-state arcs are deduced, these  conditions need no assumptions about the asymptotic behavior of the motion, co-state arc, cost functional, and its derivatives. In the absence of an asymptotic terminal constraint,  these boundary  conditions with the Pontryagin Maximum Principle allow raising the  co-state arcs, corresponding to some asymptotic  subdifferentials of the cost functional (fixing the optimal control) at infinity. If this set is a singleton, 
		these conditions coincide with the co-state arc representation proposed by  Aseev and Kryazhimskii.
		These results are illustrated by several examples.

{\bf Keywords:}
Infinite-horizon control problem, Pontryagin maximum principle, overtaking optimal control, transversality condition at infinity, convergence of subdifferentials, optimal growth

 {\bf MSC2010} 49K15,  49J53, 93C15

	\end{abstract}

	\section{Introduction}

	We will consider   an infinite-horizon optimal control problem,
	\begin{align*}
	\textrm{minimize\ }	&l(y(0))+\int_{0}^\infty f_0(\tau,y(\tau),u(\tau))\, d\tau\\ 
	\textrm{subject to\ }	&\frac{d{y}(t)}{dt}=f(t,y(t),u(t))\quad \textrm{a.e.},  \\
	&y(t)\in\mathbb{R}^m,\ u(t)\in U,\ y(0)\in \mathcal{C}_0,\ \Limsup_{\theta\uparrow\infty} \{\Lambda(\theta,y(\theta))\}\subset \mathcal{C}_\infty
	\end{align*}
	with      the relations of the Pontryagin Maximum Principle   corresponding to this problem:
	\begin{subequations}
		\begin{eqnarray}
		\label{sys_x}
		\frac{d{y}(t)}{dt}&=& f\big(t,y(t),{u}(t)\big),\\
		\label{sys_psi}
		-\frac{d{\psi}(t)}{dt}&=&\frac{\partial
			{H}}{\partial x}\big(y(t),\psi(t),{u}(t),\lambda,t\big),\\
		\label{maxH}
		\sup_{\upsilon\in
	U}H\big(y(t),\psi(t),\upsilon,\lambda,t\big)
		&=&
		H\big(y(t),\psi(t),{u}(t),\lambda,t\big).
		\end{eqnarray}
	\end{subequations}
	Here,
	${\mathbb{R}}_{+}\rav [0;\infty)$
	is the time interval of the control system, the sets $\mathcal{C}_0$ and $\mathcal{C}_\infty$ are nonempty subsets of $\mathbb{X}\rav\mathbb{R}^m$, $l$ and $\Lambda$ is a scalar function on $\mathbb{X}$, the set $U$ of  control parameters  is a nonempty subset of a certain finite-dimensional real vector space, and
	the Hamilton--Pontryagin function
	$H:{\mathbb{X}}\times{\mathbb{X}}^*\times {U}\times{\mathbb{R}_+}\times{\mathbb{R}_+}\to{\mathbb{R}}$
	is given by
	\[
	H(x,\psi,u,\lambda,t)\rav\psi f\big(t,x,u\big)-\lambda
	f_0\big(t,x,u\big)\quad
	\forall (x,\psi,u,\lambda,t)\in{\mathbb{X}}\times{\mathbb{X}}^*\times {U}\times{\mathbb{R}_+}\times{\mathbb{R}_+}.
	\]
	
	It is well known that the relations
	\eqref{sys_x}--\eqref{maxH} are  necessary conditions for finite-horizon control problems \cite{ppp}. 
	For infinite-horizon control problems, the Pontryagin Maximum Principle was proved in 
	the pioneering paper~\cite{Halkin}   in the case of the 
	finite optimality criterion  (the optimality on each finite interval of the corresponding problem with fixed ends). 
	In this sense, each optimal  process  as an extension of optimal  processes for the finite horizon problems admits the extension of \eqref{sys_x}-\eqref{maxH}  from each finite interval to $\mathbb{R}_+$. 
	By passing to the limit, one obtains  the necessary conditions on the whole  $\mathbb{R}_+$, system \eqref{sys_x}--\eqref{maxH} and the usual transversality condition at zero.

	Naturally, on the one hand, this proof does not require any supplementary information on system~\eqref{sys_x}--\eqref{maxH} at infinity; on the other hand, nor is any such knowledge gained.  In
	particular,
	this system of necessary relations  lacks one more boundary condition on the co-state arc, which corresponds to the transversality condition at the right end.
	In fact, without such a condition, relations \eqref{sys_x}--\eqref{maxH} only serve to point towards the variety of their solutions without offering a tool to choose one from among them.    
	
	To limit the search, various supplementary conditions are used; otherwise, in each specific problem, 
	one could exhaustively search through all solutions~\cite{sorger}.
	The limit value at infinity for the motion itself
	can be specified~\cite[Subsect. 4.24]{ppp}.
	The solution and/or control lie in a certain class of functions 
	(see~\cite{aucl,brodskii,Tauchnitz,Tauchnitz2020}).  
	It is also possible to connect some condition with  the value function \cite{sagara,khlopin2015lipschitz,cannarsa2018value}.
	One can try to apply some relations due to economic  reasons \cite{shell} and ones of convexity \cite{ssbook} and stability \cite{tan_rugh,smirn}.    
	Nevertheless, some supplementary information (in the form of boundary conditions) 
	can be also tried to reclaim 
	from a certain optimality criterion.
	In this paper, we obtain such conditions for rather mild optimality criteria such as the weakly overtaking criterion and the overtaking criterion (the optimality for  the upper and lower pointwise limits of the cost functional, respectively).
	
	In \cite{norv}, the necessary conditions, including a complete set of transversality conditions
	for the co-state arc at infinity, were considered.  In the case of a final state dependent cost
	functional, as well as of an asymptotic terminal constraint of 
	a linear structure, these conditions were proved under very strong asymptotic assumptions on motions and costs. In \cite{Pereira,Tauchnitz2020},  a similar result was shown under some asymptotic assumptions, guaranteeing the existence of the limit of motions. 
	In~\cite{KhlopinIMM2018}, 
	the necessary transversality condition was obtained as a consequence of the stability of the limiting subdifferentials with respect to the uniform convergence. 
	In this paper, this approach is applied to the derivation of  necessary conditions based on the stability of subdifferentials. It makes it possible, while continuing to follow Halkin's method, to reformulate  the overtaking criterion in these terms, and pass to the limit within the necessary conditions for specially selected problems for the increasing sequence of time intervals.
	
	Adhering to this approach, we apply the classical results \cite{slope} with respect to the Fr\'echet subdifferential for the epigraphical convergence and refine  the upper-estimates \cite[Theorem~6.2]{trieman} of the  limiting subdifferential of the pointwise lower and upper limit (see Lemmata~\ref{cap} and \ref{co}). Further, we apply these estimates	directly to the transversality condition of
	the classical Pontryagin Maximum Principle  \cite{ClarkeNew,vinter} for some Bolza 
	problems  for the increasing sequence of time intervals. Thus, we
	establish the necessary conditions for the weakly overtaking and overtaking criteria (see Theorems~\ref{9} and~\ref{8}), which notably require no asymptotic assumptions on the motion,  cost functional, adjoint variable,  and its gradients.

	A special focus of this paper is the question of the accuracy of the considered transversality conditions at infinity for the problem without asymptotic constraint. 
	If 
	the cost functional  gradients  have a limit for large time,  the proposed transversality  condition \eqref{WAKKa}  is explicit, i.e. it points to a unique solution to the adjoint  system of \eqref{sys_psi}; moreover, this solution coincides with the  co-state arc representation \cite{kr_as2004} proposed by Kryazhimskii and Aseev.
	On the other hand,  in the case  of the periodic functional,
	even for an infinite-horizon control problem linear in $x$ (see Example~\ref{1}), there is no hope to construct any explicit 
	boundary condition on  $\psi$   necessary for the weakly overtaking optimal criterion;  each
	such condition is going to contain a ball.
	At the same time, in this example, the proposed condition 
	\eqref{WAKKa} 
	is 
	the strongest  of all consistent with \eqref{maxH} boundary conditions on the co-state arc.
	
	The  rest  of  the  paper  is  organised  as  follows. 
	First, we introduce the statement of the infinite-horizon control problem, the dynamics and cost functional, and formulate the basic hypotheses on them; we also define all needed optimality criteria.
	In Section~\ref{s2}, we recall the concepts and notions of set-valued and variational analyses. Section~\ref{s4} exhibits  formulations of all theorems and corollaries and   the  discussion of their assumptions and conditions.
	The subsequent section is devoted to examples. 
	Subdifferentials of lower and upper limits of functions defined on finite-dimensional  sets are considered in Section~\ref{e}.
	The last part of the paper (Section~\ref{ff}) contains the proofs of all theorems.

	\section{The statement of the infinite-horizon control problem}
	
	Let
	$
	{\mathbb{R}}_{+}= [0;\infty)$
	be the time interval of  an investigated  control system, and let ${\mathbb{X}}={\mathbb{R}}^m$ be its state space. 
	Let    functions $l:\mathbb{X}\to\mathbb{R}$, $\Lambda:\mathbb{R}_+\times\mathbb{X}\to\mathbb{X}$ and nonempty sets  $\mathcal{C}_0\subset {\mathbb{X}}$, $\mathcal{C}_\infty\subset {\mathbb{X}}$ be given.
	Let a non-empty subset~$U$ of a finite-dimensional  real vector space also be given;
	denote by $\mathcal{U}$ the set of all admissible controls, all Lebesgue measurable functions $u:\mathbb{R}_+\to U$.
	
	Consider the following infinite-horizon optimal control problem:
	\begin{subequations}
		\begin{align}
		\textrm{minimize\ }	&l(y(0))+\int_{0}^\infty f_0(\tau,y(\tau),u(\tau))\, d\tau  \label{sys0_}\\
		\textrm{subject to\ }	&\frac{d{y}(t)}{dt}=f(t,y(t),u(t))\ \quad\textrm{a.e.}, \label{sys_}\\
		&y(t)\in\mathbb{X}=\mathbb{R}^m,\  y(0)\in\mathcal{C}_0,\ \Limsup_{\theta\uparrow\infty}\{\Lambda(\theta,y(\theta))\}\subset\mathcal{C}_\infty,\  u\in\mathcal{U},   \label{sysK_}
		\end{align}
	\end{subequations}   
	 here the symbol $\Limsup$ is defined in Section~\ref{s2}, see \eqref{limsup}.

	Hereinafter, we assume the following conditions to hold:
	\begin{list}{}{}
		\item[(H0)]    $l:\mathbb{X}\to\mathbb{R}$ is a locally Lipschitz continuous function; 
		\item[(H1)]   $f:\mathbb{R}_+\times\mathbb{X}\times U\to\mathbb{X}$ and
		$f_0:\mathbb{R}_+\times\mathbb{X}\times U\to\mathbb{R}$	
		are LB measurable in~$(t,u)$.
	\end{list}

	For every compact interval $I\subset \mathbb{R}_+$ 
	a pair
	$(y,u)\in (AC)(I,\mathbb{X})\times {\mathcal{U}}$ is called a control process
	if the map $I\ni\tau\mapsto  f_0\big(\tau,y(\tau),u(\tau)\big)$
	 is summable and $y$ is a solution to  equation
	 \eqref{sys_} on $I$, i.e., \eqref{sys_} holds for almost all $t\in I$. A pair $({y},{u})\in(AC)(\mathbb{R}_+,\mathbb{X})\times {\mathcal{U}}$ is called admissible control process 
	  if  $y$ satisfies \eqref{sysK_} and  the pair $({y}|_{[0;T]},{u})$ is a control process for every $T>0$.
	
	We will use the following optimality criteria:      
	\begin{subequations}
		\begin{definition}
			Call an admissible control process $(\hjk{y},\hjk{u})$ overtaking optimal \cite{tan_rugh,car1} for problem \eqref{sys0_}--\eqref{sysK_} if for every admissible control process $(y,u)$ it holds that
			\begin{align}
		\label{tauover}
			l(y(0))-l(\hjk{y}(0))+\liminf_{\theta\uparrow\infty}
			\int_{0}^\theta\big[f_0(\tau,y(\tau),u(\tau))-f_0(\tau,\hjk{y}(\tau),\hjk{u}(\tau))\big]\,d\tau
			\geq 0.
			\end{align}
		\end{definition}
		\begin{definition}
			Call an admissible control process $(\hjk{y},\hjk{u})$ weakly overtaking optimal \cite{tan_rugh,car1} for  problem \eqref{sys0_}--\eqref{sysK_} if for every admissible control  process $(y,u)$ it holds that
			\begin{align}
			\label{tauover_}
			l(y(0))-l(\hjk{y}(0))+\limsup_{\theta\uparrow\infty}
			\int_{0}^\theta\big[f_0(\tau,y(\tau),u(\tau))-f_0(\tau,\hjk{y}(\tau),\hjk{u}(\tau))\big]\,d\tau
			\geq 0.	
			\end{align}
		\end{definition}
		\end{subequations}      
		Clearly, an overtaking optimal process is weakly overtaking optimal, but we will relax the both criteria, considering merely its local variants \cite{av_new,Shvartsman}:
		\begin{definition}
			Call an admissible control process $(\hjk{y},\hjk{u})$ locally overtaking (locally weakly overtaking) optimal 
			for  problem \eqref{sys0_}--\eqref{sysK_} if  
			for every natural $n$  there exists a positive $\beta(n)$  such that, for each admissible control process $(y,u)$, 
			from 
			\begin{align}\label{local}
			\max_{t\in[0;n]}||y(t)-\hjk{y}(t)||+\meas \{t\in[0;n]\mid u(t)\neq\hjk{u}(t)\}<\beta(n)
			\end{align}
			and ${u|_{[n;\infty)}=\hjk{u}|_{[n;\infty)}}$
			there follows  inequality \eqref{tauover} (inequality \eqref{tauover_}).
		\end{definition}
	
	Some conditions for  the existence of weakly overtaking optimal
	and overtaking optimal processes are given in \cite{bald,dm,besov}. In this paper, the existence theory is not directly concerned.
	We also do not concern ourselves with sufficient optimality conditions (see \cite{ssbook,tan_rugh,kr_as,sagara,belyakov2020}).
	
	We assume that a certain admissible control process $(\hjk{y},\hjk{u})$ is locally
	weakly overtaking optimal for problem   \eqref{sys0_}--\eqref{sysK_}.  
	We will also assume several local assumptions on  $(\hjk{y},\hjk{u})$, hypotheses $(H2)$--$(H4)$.  
	Note that 
	from \cite[Hypothesis 22.25]{ClarkeNew} for each interval $[0;T]$ it follows  hypotheses $(H2)$--$(H3)$. Besides, the hypothesis $(H4)$ requires  merely the well-posedness of the right side of \eqref{sys_psi} on the graph of optimal process $(\hjk{y},\hjk{u})$.
	
	Hereinafter, we assume that
	\begin{list}{}{}
	\item[(H2)]   
		for each control parameter $\upsilon\in U$, there exists  a  neighbourhood $\mathbb{G}_\upsilon\subset \mathbb{R}_+\times\mathbb{X}$ of the graph of  $\hjk{y}$ and a measurable function $L_\upsilon:\mathbb{R}_+\to\mathbb{R}_+$ such that for all $(t,x')$, $(t,x'')\in \mathbb{G}_\upsilon$ one has
		\begin{equation}\label{301}
		\big\|f(t,x',\upsilon)-{f}(t,x'',\upsilon)\big\|+
		\big\|{f_0}(t,x',\upsilon)-{f_0}(t,x'',\upsilon)\big\|\leq L_\upsilon(t)\|x'-x''\|;
		\end{equation}
	\item[(H3)]   
		there exists a neighbourhood $\hjk{\mathbb{G}}\subset \mathbb{R}_+\times\mathbb{X}$ of the graph of  $\hjk{y}$ such that, for all $(t,x')$, $(t,x'')\in\hjk{\mathbb{G}}$, inequality \eqref{301} holds with $L_{\hjk{u}(t)}(t)$ and $\hjk{u}(t)$ instead of $L_\upsilon(t)$ and $\upsilon$; furthermore, 
		the map $\mathbb{R}_+\ni t\mapsto L_{\hjk{u}(t)}(t)$ is locally summable    (summable on each compact interval);
	\item[(H4)]   
for almost all nonnegative $t$
 the maps  $x\mapsto f(t,x,\hjk{u}(t))$ and $x\mapsto f_0(t,x,\hjk{u}(t))$   are  strictly differentiable (in $x$) at $\hjk{y}(t)$. 
 	 	\end{list}

     For each admissible control~$u\in {\mathcal{U}}$ and $(t,x)\in\mathbb{R}_+\times\mathbb{X}$ denote by $\rty(x,t,u;\cdot)$ 
     a solution $y$ to \eqref{sys_} with the initial condition $y(t)=x$ on an interval. We put that this interval is 
     the maximum existing interval of $y$ (in particular, this interval may be $\{t\}$).
       Regardless additional assumptions (see $(H5)$ and $(H6)$ below), this process may be non-unique; in this case  fix such a process for every $(x,t,u)$.
       Now,
      let us introduce  the cost functional~$J$ defined as follows:
     \[      	J(x,t,u;\theta)\rav\left\{ \begin{array}{ll}
	\int_{t}^{\theta} f_0\big(\tau,\rty(x,t,u;\tau),u(\tau)\big)\,d\tau, & \textrm{\ if\ }\theta\geq t\textrm{\ and\ }  (\rty(x,t,u;\cdot)|_{[t;\theta]},u)  \\
	& \textrm{\ is a control process on $[t;\theta]$};\\
	+\infty, & \textrm{\ otherwise} 
	\end{array}
	\right.\]
     For brevity, let us also introduce
     \[\hjk{J}(x;\theta)\rav \lsc J(x,0,\hjk{u};\theta)
     \qquad\forall \theta\geq 0,x\in\mathbb{X},\]
	 here the symbol $\lsc$ is defined in Section~\ref{s2}, see \eqref{lsc}. 
	 By $(H3)$, for every nonnegative $\theta$, $J(x,0,\hjk{u};\theta)=\hjk{J}(x;\theta)$
	 for all $x$ close enough to $\hjk{y}(0)$.

Later, for greater convenience, we will also consider the much stronger hypotheses:
	\begin{list}{}{}
\item[(H5)]   there exists a neighbourhood $\hjk{\mathbb{G}}_\exists\subset \mathbb{X}$ of the point  $\hjk{y}(0)$ such that for every initial condition $x\in\hjk{\mathbb{G}}_\exists$  one finds  a solution $y=\rty(x,0,\hjk{u};\cdot)$ to \eqref{sys_x}  on $\mathbb{R}$  such that the graph of $y$  belongs to $\hjk{\mathbb{G}}$;
	\item[(H6)] 
	there exists a neighbourhood $\hjk{\mathbb{G}}_\partial\subset \hjk{\mathbb{G}}_\exists$  of the point  $\hjk{y}(0)$ such that
for almost all positive $t$
 the maps  $\hjk{\mathbb{G}}_\partial\ni  x\mapsto f(t,x,\hjk{u}(t))$ and $\hjk{\mathbb{G}}_\partial\ni x\mapsto f_0(t,x,\hjk{u}(t))$   are  Fr\'{e}chet differentiable in $x$ at $x=\rty(x',0,\hjk{u};t)$ for all $x'\in\hjk{\mathbb{G}}_\partial$.	 
	\end{list}
	These hypotheses guarantee that  for every point $(t,x)$ close enough to the graph of $\hjk{y}$  the motion $\rty(x,t,\hjk{u};\cdot)\in (AC)(\mathbb{R}_+;\mathbb{X})$ is unique. Furthermore, the maps 
	${\mathbb{G}}_\exists\ni x\mapsto \rty(x,0,\hjk{u};\theta)$ and ${\mathbb{G}}_\exists\ni x\mapsto J(x,0,\hjk{u};\theta)=\hjk{J}(x;\theta)$ are finite, continuous, and  Fr\'{e}chet differentiable for every positive $\theta$.

	\section{Some definitions from set-valued and variational analyses}
	\label{s2}
	
	We will use elementary notions from the set-valued and variational analyses \cite{RW,Mordukh1,mord5}.
	
	Consider a nonempty set ${\mathcal{X}}$ of  real Euclidean space~$\mathbb{X}$.
	Let $\co {{\mathcal{X}}}$, $\cl {{\mathcal{X}}}$, $\brd \mathcal{X}$, and $\internary {{\mathcal{X}}}$ denote  the convex hull, closure, boundary, and interior of ${{\mathcal{X}}}$.
	The symbol ${\imath}_{\mathcal{X}}$ denotes the indicator function of the set ${\mathcal{X}}$; this function has value $0$ on ${\mathcal{X}}$, but $+\infty$ elsewhere. 
	Recall also that the sequential Painlev\'e--Kuratowski upper and lower limits of a set-valued map  $F:\mathbb{X}\rightrightarrows \mathbb{X}$ at a point $x\in\mathbb{X}$
	is
	\begin{multline}\label{limsup}
	\Limsup_{z\to x}F(z)\rav\bigcap_{\varkappa>0}\cl\bigcup_{||z-x||<\varkappa} F(z)\\=\{\zeta\in \mathbb{X}\mid  \exists\ \textrm{sequences of}\ z_k\to x,\ \zeta_k\to \zeta\
	\textrm{with}\ \zeta_k\in F(z_k)\  
	\textrm{for all} \ 
	k\in\mathbb{N}
	\},
	\end{multline}
	\begin{multline}\label{liminf}
	\Liminf_{z\to x}F(z)\rav\cl\bigcup_{\varkappa>0}\bigcap_{||z-x||<\varkappa} F(z)\\=\{\zeta\in \mathbb{X}\mid  \forall\ \textrm{sequence of}\ z_k\to x,\  \exists\zeta_k\in F(z_k)\  
	\textrm{for all\ } 
	k\in\mathbb{N}\
	\textrm{with}\ \zeta_k\to \zeta 
	\}.
	\end{multline}
	
	For  a point $x\in \mathbb{X}$,
	we say that  $\zeta\in \mathbb{X}^*$ is  a   Fr\'{e}chet (regular) normal  to ${{\mathcal{X}}}$ at $x$ if one has $x\in{{\mathcal{X}}}$ and
	\[\limsup_{z_n\to x} \frac{\zeta(z_n-x)}{||z_n-x||}\leq 0\]
	for all sequences of $z_n\in{{\mathcal{X}}}$  converging to $x$.
	Denote by $\hjk{N}(x;{{\mathcal{X}}})$ the set of all Fr\'{e}chet normals to ${{\mathcal{X}}}$ at $x$; 
	put
	$\hjk{N}(x;{{\mathcal{X}}})\rav\varnothing$ if $x\in \mathbb{X}\setminus{{\mathcal{X}}}$. 
	The sequential Painlev\'e--Kuratowski upper limit of $\hjk{N}(z;{{\mathcal{X}}})$ as $z\to x$ is the set
	\[{N}(x;{{\mathcal{X}}})\rav\Limsup_{z\to x} \hjk{N}(z;{{\mathcal{X}}}),\]
	which is called the limiting (basic, Mordukhovich) normal cone to ${{\mathcal{X}}}$ at $x$.

	Consider an extended-real-valued  function $g:\mathbb{X}\to\mathbb{R}\cup\{-\infty,+\infty\}.$
	Define its epigraph $\epi g \rav\{(x,a)\in \mathbb{X}\times\mathbb{R}\mid a\geq g(x)\}$
	and its graph $\Gr g \rav\{(x,g(x))\in \mathbb{X}\times\mathbb{R}\mid x\in\mathbb{X}\}$.
	Also, recall that
	the lower semicontinuous envelope of the function $g$ is defined as follows:
	\begin{equation}
	    \label{lsc}\lsc g(x)\rav\lim_{\varkappa\downarrow 0}\inf_{||x-z||<\varkappa} g(z)\qquad\forall x\in\mathbb{X}.
	\end{equation}
	Note that this function is  lower semicontinuous.

		For a point $x\in \mathbb{X}$ with $|g(x)|<+\infty$,
		define the limiting (basic, Mordukhovich) subdifferential \cite[Definition 1.77(i)]{Mordukh1} of $g$ at $x$ as
		\begin{align*}
		\partial g(x)\rav\{\zeta\in \mathbb{X}^*\mid (\zeta,-1)\in N(x,g(x); \epi g) \},
		\end{align*}
		the singular limiting subdifferential \cite[Definition 1.77(ii)]{Mordukh1} of $g$ at $x$ as
		\begin{align*}
		\partial^\infty g(x)\rav\{\zeta\in \mathbb{X}^*\mid (\zeta,0)\in N(x,g(x); \epi g) \},
		\end{align*}
		and the Fr\'{e}chet (regular) subdifferential  \cite[(1.36)]{mord5} of $g$ at $x$ as
		\begin{align*}
		\hjk{\partial} g(x)\rav\{\zeta\in \mathbb{X}^*\mid (\zeta,-1)\in \hjk{N}(x,g(x) ; \epi g) \}.
		\end{align*}   
		Put $\partial g(x)=\partial^\infty g(x)=\hjk{\partial} g(x)=\varnothing$  if $|g(x)|=\infty$.

		Note that, since $\mathbb{X}$ is finite-dimensional, for a lower semicontinuous around $x$ function $g$, according to \cite[(1.37) and (1.38)]{mord5},  
		a point $\zeta$ in $\mathbb{X}^*$ lies in $\partial g(x)$   iff
		\begin{multline*}
		\textrm{one finds sequences of }  x_n\in\mathbb{X},\zeta_n\in \hjk{\partial} g(x_n)\ 
		\textrm{ satisfying }x_n\to x,
		\zeta_n\to\zeta, g(x_n)\to g(x);
		\end{multline*}
		furthermore,
		a point $\zeta$ in $\mathbb{X}^*$ lies  in
		$\partial^\infty g(x)$  iff
		\begin{multline*}
		\textrm{one finds sequences of }  \lambda_n>0,x_n\in\mathbb{X},\zeta_n\in \hjk{\partial} g(x_n)\\ \textrm{ satisfying } \lambda_n\downarrow 0,x_n\to x,
		\lambda_n\zeta_n\to\zeta, g(x_n)\to g(x).
		\end{multline*}
	Also, in the case of 
	Lipschitz continuous function $g$,  one has $\partial^\infty g(x)=\{0\}$, although $\partial g(x)$ is not empty and is bounded \cite[Corollary~1.81]{Mordukh1}; in addition,
	$\hjk{N}(x,g(x);\epi g)=\cup_{r\geq 0}r(\hjk{\partial}g(x) \times\{-1\})$ and  $\co\partial g(x)=\co(-\partial (-g))(x)$ 
	hold by \cite[Theorem~8.9]{RW} and
	by \cite[(1.75) and (1.83)]{mord5}), respectively.  Furthermore,  
	$\co\partial g(x)=\{\frac{d g}{dx}(x)\}$ iff $g$ is 
	strictly differentiable at $x$ (see \cite[Ex. 5.2.4]{BorZhu}). 
	At last, notice that,
	 for every  set $A$ and point $x\in A$, due to \cite[(1.43)]{mord5}, one has 
\begin{equation}\label{133}
\begin{array}{rr}
\hjk\partial \dist(x;A)=\hjk{N}(x;A)\cap\{\zeta\in\mathbb{X}^*\,|\,\|\zeta\|\leq 1\},\\  
\partial \dist(x;A)=N(x;A)\cap\{\zeta\in\mathbb{X}^*\,|\,\|\zeta\|\leq 1\}.
	\end{array}
\end{equation}

	\section{The main results}
	\label{s4}
	
	In this section, we will formulate  and  discuss the main results of the article. 
	
	First,
	consider the homeward set for all generated by $\hjk{u}$ motions that passed the asymptotic constraint
	$\mathcal{C}_{\infty}$, which is the set
	\[\mathcal{C}_{\as}\rav\big\{x\in\mathbb{X}\,\big|\,
	\Limsup_{\theta\uparrow \infty} \{\Lambda(\theta,\rty(x,0,\hjk{u};\theta))\}\subset \mathcal{C}_{\infty}
	\textrm{\;and\;}{J}(x,0,\hjk{u};t)<\infty\textrm{\;}\forall t\geq 0
	\big\}.\]
	This set  will be used below instead of $\mathcal{C}_{\infty}$, because the  transversality conditions at infinity will also transfer at zero.
	
\newcorr{
We also consider the following assumptions:}
	\begin{list}{}{}
	\item[\newcorr{$(E_{\sup})$}] \newcorr{For this function}
$$\newcorr{W_{\sup}(x)\rav\displaystyle \limsup_{\substack{\theta\uparrow\infty\\ \ }} \big[\hjk{J}(x;\theta)-\hjk{J}(\hjk{y}(0);\theta)\big]}
$$ 
 \newcorr{at least one of the following conditions is satisfied:}
  \begin{list}{}{}
  \item[\newcorr{$(E'_{\sup})$}] \newcorr{$W_{\sup}$ is lower semicontinuous at $\hjk{y}(0)$;}
  \item[\newcorr{$(E''_{\sup})$}]  \newcorr{$\hjk{y}(0)$ lies in the interior of $\mathcal{C}_0$;}
  \item[\newcorr{$(E'''_{\sup})$}]
     \newcorr{$W_{\sup}+i_{\mathcal{C}_{\as}}$ is lower semicontinuous at $\hjk{y}(0)$.}
 \end{list}
 \item[\newcorr{$(E_{\inf})$}] \newcorr{For this function}
$$\newcorr{W_{\inf}(x)\rav\displaystyle \liminf_{\substack{\theta\uparrow\infty\\ \ }} \big[\hjk{J}(x;\theta)-\hjk{J}(\hjk{y}(0);\theta)\big]}
$$ 
 \newcorr{at least one of the following conditions is satisfied:}
  \begin{list}{}{}
  \item[\newcorr{$(E'_{\inf})$}] \newcorr{$W_{\inf}$ is lower semicontinuous at $\hjk{y}(0)$;}
  \item[\newcorr{$(E''_{\inf})$}]  \newcorr{$\hjk{y}(0)$ lies in the interior of $\mathcal{C}_0$;}
  \item[\newcorr{$(E'''_{\inf})$}]
     \newcorr{$W_{\inf}+i_{\mathcal{C}_{\as}}$ is lower semicontinuous at $\hjk{y}(0)$.}
 \end{list}
\end{list}
\newcorr{The asymptotics in these assumptions can be quite difficult to verify; for instance, see \cite[Theorem~3.2]{bald}. However, the verification of condition $(E''_{\inf})=(E''_{\sup})$ is trivial and really devoid of any asymptotic assumptions.}

	\begin{theorem}\label{9}
		Under conditions $(H0)$--$(H4)$ and $(E_{\sup})$
		let  an admissible control process $(\hjk{y},\hjk{u})$ be locally  weakly overtaking  optimal for problem \eqref{sys0_}--\eqref{sysK_}.
		
		Then,
		there exists a   nonzero solution $(\hjk{\psi},\hjk{\lambda})\in C(\mathbb{R}_+,\mathbb{X}^*)\times\{0,1\}$ of the corresponding to $(\hjk{y},\hjk{u})$ system \eqref{sys_psi}--\eqref{maxH} 
		with transversality conditions \eqref{400} and \eqref{WAKKa}:
		\begin{align}
		\label{400}
		\hjk{\psi}(0)&\in \hjk{\lambda}\partial l(\hjk{y}(0))+N(\hjk{y}(0);\cl{\mathcal{C}_0}),
		\end{align}
	\begin{subequations}
		\begin{multline}
		-(\hjk{\psi}(0),\hjk{\lambda})\in \co N(\hjk{y}(0);\cl \mathcal{C}_{\as})\times\{0\}+\co\Limsup_{\substack{\theta\uparrow\infty,\ x\to \hjk{y}(0),\ \\\hjk{J}(x;\theta)-\hjk{J}(\hjk{y}(0);\theta)\to 0}}
		N(\hjk{y}(0),\hjk{J}(\hjk{y}(0);\theta);\epi \hjk{J}(\cdot;\theta)).
		\label{WAKKa}    
		\end{multline}
        Furthermore in that case, if $(H5)$ is fulfilled, one has
		\begin{align}
-\hjk{\psi}(0)&\in \co N(\hjk{y}(0);\cl \mathcal{C}_{\as})+\co\Limsup_{\substack{\theta\uparrow\infty,\ x\to \hjk{y}(0),\ 0<\lambda\to \hjk{\lambda},\\\hjk{J}(x;\theta)-\hjk{J}(\hjk{y}(0);\theta)\to 0}}
\lambda\hjk{\partial}_x \hjk{J}(x;\theta_n).
\label{WAKKc}
\end{align}
     If in addition   $(H5)$ and $(H6)$ are fulfilled and for a given constant $R$  all the maps $\hjk{J}(\cdot;\theta)$, $\theta>0$, are $R$-Lipschitz continuous on a given  neighborhood of $\hjk{y}(0)$,
     one has 
		\begin{align}
-\hjk{\psi}(0)\in N(\hjk{y}(0);\cl \mathcal{C}_{\as})+\hjk{\lambda}\co\Limsup_{\substack{\theta\uparrow\infty,\ x\to \hjk{y}(0),\\\hjk{J}(x;\theta)-\hjk{J}(\hjk{y}(0);\theta)\to 0}}
\Big\{\frac{\partial \hjk{J}}{\partial x}(x;\theta)\Big\}.
\label{WAKKd}
\end{align}
	\end{subequations}	
	\end{theorem}
	\begin{theorem}\label{8}
		Under conditions $(H0)$--$(H4)$ and $(E_{\inf})$
		let  an admissible control process $(\hjk{y},\hjk{u})$ be locally overtaking  optimal for problem \eqref{sys0_}--\eqref{sysK_}.
		
		Then,   there exists a  nonzero solution $(\hjk{\psi},\hjk{\lambda})\in C(\mathbb{R}_+,\mathbb{X}^*)\times\{0,1\}$ of the corresponding to $(\hjk{y},\hjk{u})$ system
\eqref{sys_psi}--\eqref{maxH} 
with transversality conditions \eqref{400} and \eqref{AKKa}:
\begin{subequations}
		\begin{multline}
-(\hjk{\psi}(0),\hjk{\lambda})\in N(\hjk{y}(0);\cl \mathcal{C}_{\as})\times\{0\}\\+\bigcap_{\substack{(\theta_n)_{n\in\mathbb{N}}\in\mathbb{R}_+^\mathbb{N},\\ \theta_n\uparrow\infty}}
\Limsup_{\substack{n\uparrow\infty,\;x\to\hjk{y}(0),\\ \hjk{J}(x;\theta_n)-\hjk{J}(\hjk{y}(0);\theta_n)\to 0}
}
\check{N}(x,\hjk{J}(x;\theta_n);\epi \hjk{J}(x;\theta_n)).
\label{AKKa}
\end{multline}
        Furthermore, if in addition $(H5)$ is fulfilled, one has
\begin{align}
-\hjk{\psi}(0)&\in N(\hjk{y}(0);\cl \mathcal{C}_{\as})+
\bigcap_{\substack{(\theta_n)_{n\in\mathbb{N}}\in\mathbb{R}_+^\mathbb{N},\\ \theta_n\uparrow\infty}}
\Limsup_{\substack{n\uparrow\infty,\;x\to\hjk{y}(0),\;0<\lambda\to \hjk{\lambda}\\ \hjk{J}(x;\theta_n)-\hjk{J}(\hjk{y}(0);\theta_n)\to 0}
}
\lambda\hjk{\partial}_x \hjk{J}(x;\theta_n).
\label{AKKc}
\end{align}
\end{subequations}

	\end{theorem}
	The proofs of these theorems 
	are  presented in Section~\ref{ff}.
	\begin{remark}
		Conditions \eqref{WAKKa}--\eqref{WAKKd} can possess the continuum of solutions to \eqref{sys_psi}, but it is inescapable.
		In Example~\ref{1}, 
		the cost functional oscillates  at infinity. 
		It was enough that  
		in the corresponding
		infinite-horizon control problem (with a fixed initial state) the dimension of the family of optimal processes reaches $\dim \mathbb{X}$. Since the dynamics and integrand in this example  are also linear in $x$,
		the same dimension is inevitable for the inclusion of 
		{a} boundary necessary
		condition on co-state arcs. Furthermore,   in this example inclusion \eqref{WAKKa}  is the tightest of all boundary conditions on co-state arcs for the weakly overtaking criterion.	
		This example demonstrates that,  for the weakly overtaking  criterion without additional asymptotic assumptions, there is no hope to construct an explicit
		boundary condition on co-state arcs consistent with \eqref{maxH}.	
	\end{remark}	      
	  \begin{remark}
	  The conditions of  Theorems~\ref{9} and~\ref{8} are  assumed that a function $l$ is Lipschitz continuous (see $(H0)$). In the case of a lower semicontinuous at $\hjk{y}(0)$ function $\check{l}:\mathbb{X}\to\mathbb{R}$ one can introduce new dynamics $\check{f}(t,x,a,u)\equiv{f}(t,x,u)$ and integrand $\check{f}_0(t,x,a,u)\equiv{f}_0(t,x,u)$ on $\mathbb{R}_+\times\mathbb{X}\times\mathbb{R}\times\mathbb{U}$ with a new initial condition $\check{\mathcal{C}}_0\rav \cl\epi(\check{l}+\imath_{\mathcal{C}_0})$ and new endpoint cost $(x,a)\mapsto a$.  Then,    together with the corresponding transversality condition at infinity (either \eqref{WAKKa}, or \eqref{AKKa}), each of Theorems~\ref{9} and~\ref{8} yields the transversality condition at zero:
	  $$ (\hjk{\psi}(0),q_a)\in \hjk{\lambda}(0,1)+N(\hjk{y}(0),\check{l}(\hjk{y}(0));\cl\epi(\check{l}+\imath_{\check{\mathcal{C}}_0})),$$
	  here $q_a$ is constant, since the Hamiltonian is independent of $a$; furthermore, this constant is zero, since $\hjk{J}$ of this problem is independent of $a$.
	  So, we obtain the classic transversality condition
	  $$(\hjk{\psi}(0),-\hjk{\lambda})\in N(\hjk{y}(0),\check{l}(\hjk{y}(0));\cl\epi(\check{l}+\imath_{\check{\mathcal{C}}_0})) $$
	  for a control problem with a lower semicontinuous endpoint cost $\check{l}$.
\end{remark}
	  
	\begin{remark}
		In the definitions of the overtaking criterion and the weakly overtaking criterion,
		the time parameter $\theta$    tends to infinity  arbitrarily.
		We could fix an unbounded set $\mathbb{T}$ and  consider these definitions with the additional restriction $\theta_n\in\mathbb{T}$.
		We could apply Theorem~\ref{9} and Theorem~\ref{8} to such definitions, but
		this restriction should have been added in
		transversality conditions. In particular, this idea could be very useful in the case of boundedness of the family of
		$\frac{\partial\hjk{J}}{\partial x}(\cdot;\theta_n)$ for a given sequence of $\theta_n$.
	\end{remark}

	Consider now in more detail the 
	infinite-horizon control problems with free right endpoint, i.e.,  the case of the absence of  asymptotic constraints
	($\mathcal{C}_{\infty}=\mathbb{X}$).
In this case, under the uniform bounded gradients
	$\frac{\partial \hjk{J}}{\partial x}(x;\theta)$, assuming for $f$ and $f_0$ the smoothness in $x$ and the continuity in $u$, necessary condition \eqref{WAKKc} is deduced for the overtaking criterion in \cite{KhlopinIMM2018,KhlopinIFAC}. Now, we may be show more.
	\begin{corollary}\label{72}
		Under conditions $(H0)$--$(H4)$ and $(E_{\sup})$
		let  a process $(\hjk{y},\hjk{u})$ be locally weakly overtaking  optimal for problem \eqref{sys0_}--\eqref{sysK_}.
		Let also	$\hjk{y}(0)\in\internary\mathcal{C}_{\as}$; this holds in particular when $\mathcal{C}_{\infty}=\mathbb{X}$.
		
		Then the conclusion of Theorem~\ref{9} holds.
		
		Furthermore, 
		for all 
		$\varkappa>0$, for all natural  $i\in[1\!:\!\dim \mathbb{X}+1]$, 
		there exists a time instant $\theta_i>1/\varkappa$, a point $x_i\in\mathbb{X}$, a gradient $\zeta_i\in\hjk{\partial}\hjk{J}(x_i;\theta_i)$,
		and nonnegative numbers $\lambda_i$ and $\alpha_i$  such that one has $\sum_{k=1}^{\dim \mathbb{X}+1}\alpha_k=1$,
		$\big\|\hjk{\psi}(0)+\sum_{k=1}^{\dim \mathbb{X}+1}\alpha_k\lambda_k\zeta_k\big\|< \varkappa$, and
		$|\lambda_i-\hjk{\lambda}|+ \|x_i-\hjk{y}(0)\|+|\hjk{J}({x}_i;\theta_i)-\hjk{J}(\hjk{y}(0);\theta_i)|<\varkappa$ for all $i\in[1\!:\!\dim \mathbb{X}+1]$.
		
		If in addition $(H5)$--$(H6)$ are fulfilled and the maps 
		$x\mapsto\frac{\partial \hjk{J}}{\partial x}(x;\theta)$, $\theta>0$,
		are well-defined and bounded on a given neighbourhood of  $\hjk{y}(0)$, one can put ${\lambda}_i=\hjk{\lambda}=1$ and $\zeta_i=\frac{\partial \hjk{J}}{\partial x}({x}_i;\theta_i)$  for all $i\in[1\!:\!\dim \mathbb{X}+1]$. 
	\end{corollary}
	\begin{corollary}\label{727}
		Under conditions $(H0)$--$(H4)$ and $(E_{\inf})$
		let  a process $(\hjk{y},\hjk{u})$ be locally  overtaking  optimal for problem \eqref{sys0_}--\eqref{sysK_}.
		Let also	$\hjk{y}(0)\in\internary\mathcal{C}_{\as}$; this holds in particular when $\mathcal{C}_{\infty}=\mathbb{X}$.
		
		Then the conclusion of Theorem~\ref{8} holds.
		
		Furthermore, 
		for every positive
		$\varkappa$ and unbounded increasing sequence of $\theta_n$,  
		there exists a natural $n>1/\varkappa$, a point $x\in\mathbb{X}$,   a gradient $\zeta\in\hjk{\partial}\hjk{J}(x;\theta_n)$,
		and nonnegative $\lambda$ such that one has 
		$|\lambda-\hjk{\lambda}|+\|x-\hjk{y}(0)\|+|\hjk{J}({x};\theta_n)-\hjk{J}(\hjk{y}(0);\theta_n)|<\varkappa$ and 
		$\big\|\hjk{\psi}(0)+\lambda\zeta\big\|< \varkappa$.
		
		If in addition $(H5)$--$(H6)$ are fulfilled and there exists an unbounded increasing sequence of $\theta_n$ such that the maps 
		$x\mapsto\frac{\partial \hjk{J}}{\partial x}(x;\theta_n)$, $n\in\mathbb{N}$,
		are well-defined and bounded on a given neighbourhood of  $\hjk{y}(0)$, one can put $\hjk{\lambda}={\lambda}=1$ and $\zeta=\frac{\partial \hjk{J}}{\partial x}({x};\theta_n)$. 
	\end{corollary}

	\medskip
	
	Corollaries~\ref{72} and~\ref{727}  make it possible to apply \eqref{WAKKa} and \eqref{AKKa} without any asymptotic constraints, but
	this condition  may be satisfied by a continuum of the co-state arcs.
	Consider  another approach: let us start by searching for an explicit transversality condition, i.e., an asymptotic condition that would select exactly one co-state arc for each optimal process.  
	
	For this purpose, \cite[Theorem~8.1]{norv} proposed to find $\hjk{\psi}$
	such that it is the pointwise
	limit of a sequence of the co-state arcs that equal zero on a certain unbounded sequence  of time instants $\theta_n$. 
	The corresponding necessary condition  was proved 
	for the infinite-horizon control problem under some  strong assumptions on the asymptotics of $\rty$, $J$, and their gradients.
	Under these assumptions, the proposed condition  is equivalent to \eqref{WAKKa}.
	Later, for the same purpose, in 
	\cite{kr_as2004} and then in \cite{kr_as,JDCS,optim,av_new,Tauchnitz,belyakov2018,aseev2019new}, many assumptions on the asymptotic behavior of $f,f_0,J$, and their gradients were considered. Under these assumptions, the solution $(\hjk{\psi},\hjk{\lambda})$ to \eqref{sys_psi}--\eqref{maxH}  is determined by  the following formula: 
	\begin{subequations}
		\begin{equation} \label{AK}
		-\hjk{\psi}(0)
		=\lim_{\theta\uparrow \infty}\int_0^\theta
		\frac{\partial f_0}{\partial x}
		\big(\tau,\hjk{y}(\tau),\hjk{u}(\tau)\big)
		\, \hjk{A}(\tau)
		\,d\tau,\  \hjk{\lambda}=1.
		\end{equation}
		Here,      $\hjk{A}\in C({\mathbb{R}_+}, {\mathbb{R}^{\dim \mathbb{X}\times \dim \mathbb{X}}})$ is the solution to the Cauchy problem
		\begin{equation*}
		\frac{d{\hjk{A}}(t)}{dt} =\frac{\partial f }{\partial x}
		\big(t,\hjk{y}(t),\hjk{u}(t)\big)
		\hjk{A}(t),\quad \hjk{A}(0)=\textrm{Id}. 
		\end{equation*}
		Let us also note two equivalent representations of this formula.  The first one, obtained in \cite{JDCS}, is expressed as
		\begin{equation} \label{PsiA}
		\lim_{\theta\uparrow\infty}\hjk{\psi}(\theta) \hjk{A}(\theta)=0,
		\  \hjk{\lambda}=1
		\end{equation}
		and closely echoes the famous Shell's condition 
		\cite{shell,sagara} and 
		the Arrow-like condition  \cite{sagara,ssbook,belyakov2020}.
		The second  equivalent to \eqref{AK} expression
		\begin{equation} \label{AKd}
		-\hjk{\psi}(0)=\lim_{\theta\uparrow\infty} \frac{\partial \hjk{J}}{\partial x}(\hjk{y}(0);\theta),
		\  \hjk{\lambda}=1
		\end{equation}
		is useful in light of conditions \eqref{WAKKa} and \cite[(38b)]{norv}.
	\end{subequations}      
	
	As shown below in Example~\ref{aa}, condition \eqref{AK} may  be inconsistent with system \eqref{sys_psi}, \eqref{maxH} corresponding to an overtaking optimal process when
	the gradient at the initial state of the limit of $\hjk{J}$ does not coincide with the limit of
	gradients of $\hjk{J}$ at this state.
	This commutativity as a basic hypothesis for deducing some transversality condition was considered, in particular, in \cite[(3.4)]{kami}.
	Under similar assumptions,  the corresponding results in \cite{kr_as,JDCS,optim,belyakov2018,KhlopinIMM2018,aseev2019new,Tauchnitz2020}
	do not imply
	the following result.  
	\begin{corollary}\label{55}
		Under conditions $(H0)$--$(H6)$
		let  a process $(\hjk{y},\hjk{u})$ be locally weakly overtaking  optimal for problem \eqref{sys0_}--\eqref{sysK_}. Assume also that 
		$\hjk{y}(0)\in\internary\mathcal{C}_{\as}$; this holds in particular when $\mathcal{C}_{\infty}=\mathbb{X}$. Let there also exists a finite limit
		\begin{equation}
		\label{4004}
		\lim_{\theta\uparrow\infty,\ x\to \hjk{y}(0)
		}\frac{\partial \hjk{J}}{\partial x}(x;\theta).
		\end{equation}

		Then, 
		the system of relations \eqref{sys_psi}--\eqref{maxH}, \eqref{AK} has exactly one solution $(\hjk{\psi},\hjk{\lambda})$. Furthermore, this solution also satisfies conditions \eqref{400}, \eqref{PsiA}, \eqref{AKd},
		\begin{equation}      \label{4E}
		\lim_{\theta\uparrow\infty}\frac{\partial J}{\partial x}(x,t,\hjk{u};\theta)\Big|_{x=\hjk{y}(t)}
		=-\hjk{\psi}(t) \qquad \forall t\geq 0,
		\end{equation}
		\begin{multline}
		\inf_{u\in\mathcal{U}}\liminf_{\theta\uparrow\infty}\Big[{H}\Big(\hjk{y}(t),-\frac{\partial J}{\partial x}(x,t,\hjk{u};\theta),\hjk{u}(t),1,t\Big)\Big|_{x=\hjk{y}(t)}\\ 
		-
		{H}\Big(\hjk{y}(t),-\frac{\partial J}{\partial x}(x,t,\hjk{u};\theta)\Big|_{x=\hjk{y}(t)},u(t),1,t\Big)\Big]\geq 0 \quad\ {a.e.\ }
		\label{Anton}
		\end{multline}
	\end{corollary}
	\doc 
		Note that $(E'_{\sup})$ and $(E_{\sup})$ as well as
		the existence and the finiteness of the limit in~\eqref{AK}  is an immediate consequence of~\eqref{4004} and equalities
		\begin{equation}      \label{4C}
		\frac{\partial \rty}{\partial x}(\hjk{y}(0),0,\hjk{u};\theta)=\hjk{A}(\theta),\ 
		\frac{\partial \hjk{J}}{\partial x}(\hjk{y}(0);\theta)
		=\int_0^\theta
		\frac{\partial f_0}{\partial x}
		\big(\tau,\hjk{y}(\tau),\hjk{u}(\tau)\big)
		\, \hjk{A}(\tau)
		\,d\tau
		\quad  \forall \theta\in{\mathbb{R}_+}.
		\end{equation}
		
		By Corollary~\ref{72},  one can find a  solution $(\hjk{\psi},1)$ of the corresponding to $(\hjk{y},\hjk{u})$ system \eqref{sys_psi}--\eqref{maxH} such that  $-\hjk{\psi}(0)$ is a  convex combination of partial limits of $\frac{\partial \hjk{J}}{\partial x}(x_n;\theta_n)$
		for certain sequences $x_n\to\hjk{y}(0), \theta_n\uparrow\infty$.
		Then, by \eqref{4004}, this is the limit of $\frac{\partial \hjk{J}}{\partial x}(\hjk{y}(0);\theta)$ as $\theta\uparrow\infty$. So, we have proved \eqref{AKd}.        
		Now, from
		\eqref{4C}, we see that \eqref{AK} holds for $\hjk{\psi}$;
		moreover, condition~\eqref{AK} makes it possible to reconstruct $\hjk{\psi}$ uniquely. 
		At the same time, \eqref{maxH} holds for all $t\geq 0$ except a possibly empty subset $\mathcal{N}\subset\mathbb{R}_+$ of  zero measure. Fix this set.
		
		To prove that \eqref{PsiA} and \eqref{4E}, note that, since 
		$\hjk{\psi}$ as a solution to  \eqref{sys_psi} satisfies  the Cauchy formula
		\begin{equation}      \label{800}
		\hjk{\psi}(\theta)\hjk{A}(\theta)-\hjk{\psi}(t)\hjk{A}(t)=\int_t^\theta
		\frac{\partial f_0}{\partial x}
		\big(\tau,\hjk{y}(\tau),\hjk{u}(\tau)\big)
		\, \hjk{A}(\tau)
		\,d\tau\qquad\forall\theta>t, 
		\end{equation}
		the passage to the limit  as $\theta\uparrow\infty$ with $t=0$ leads to \eqref{PsiA}. Further, for a nonnegative $t$ and $\theta>t$,
		one has  the equality $J\big(\rty(x,0,\hjk{u};t),t,\hjk{u};\theta\big)=\hjk{J}(x;\theta)-\hjk{J}(x;t)$.
		Differentiating it in $x$ at $\hjk{y}(0)$,  we have
		\begin{align*}      
		\int_t^\theta
		\frac{\partial f_0}{\partial x}
		\big(\tau,\hjk{y}(\tau),\hjk{u}(\tau)\big)
		\, \hjk{A}(\tau)
		\nonumber
		\,d\tau\ravref{4C}\frac{\partial \hjk{J}}{\partial x}(x;\theta)\Big|_{x=\hjk{y}(0)}-\frac{\partial \hjk{J}}{\partial x}(x;t)\Big|_{x=\hjk{y}(0)}\\
		\nonumber
		\;=\;\frac{\partial J(\rty(z,0,\hjk{u};t),t,\hjk{u};\theta)}{\partial z}\Big|_{z=\hjk{y}(0)}
		\ravref{4C}\frac{\partial J}{\partial x}(x,t,\hjk{u};\theta)\Big|_{x=\hjk{y}(t)}
		\hjk{A}(t).
		\end{align*}
		Combining it  with  \eqref{800} leads to
		\(\frac{\partial J}{\partial x}(\hjk{y}(t),t,\hjk{u};\theta)
		=\big(\hjk{\psi}(\theta)\hjk{A}(\theta)-\hjk{\psi}(t)\hjk{A}(t)\big)\hjk{A}^{-1}(t).\)
		Passing to the limit as $\theta\uparrow\infty$, by \eqref{PsiA}, we obtain \eqref{4E} for all nonnegative $t$.
		
		Let us prove condition~\eqref{Anton}. Suppose it is false. Then, there could exists a  $\tau\in \mathbb{R}_+\setminus \mathcal{N}$, a  $u\in \mathcal{U}$, an~$\epsi>0$, and  an unboundedly increasing sequence of $\theta_n>0$ satisfying
		\begin{multline*}
		{H}\Big(\hjk{y}(\tau),-\frac{\partial J}{\partial x}(\hjk{y}(\tau),\tau,\hjk{u};\theta_n),\hjk{u}(\tau),1,\tau\Big)
		\leq
		{H}\Big(\hjk{y}(\tau),-\frac{\partial J}{\partial x}(\hjk{y}(\tau),\tau,\hjk{u};\theta_n),u(\tau),         1,\tau\Big)-\epsi.    
		\end{multline*}
		By \eqref{4E}, $\hjk{\psi}(\tau)$ would be the pointwise 
		limit of $-\frac{\partial J}{\partial x}(\hjk{y}(\tau),\tau,\hjk{u};\theta_n)$ as $n\uparrow\infty$; therefore,  one could have
		\[{H}\big(\hjk{y}(\tau),\hjk{\psi}(\tau),\hjk{u}(\tau),1,\tau\big)
		\leq
		{H}\big(\hjk{y}(\tau),\hjk{\psi}(\tau),u(\tau),1,\tau\big)-\epsi.\]
		This would contradict  condition~\eqref{maxH} for $\tau\in\mathbb{R}_+\setminus \mathcal{N}$. Thus, condition \eqref{Anton} has been proved.
	\bo       
	\section{Examples}
	
	The first example will show the direct calculation of the co-state arc and optimal control by  Corollary~\ref{72}. Earlier this example was considered in \cite{GBG,GBG2022} in the case when $r$ is positive and  ${\delta}$ is positive $1$-periodic stepwise function with two regimes.
	
	\begin{example}\label{ggg} 
			\begin{align*}        
		\textrm{Minimize\ }            
		&\int_{0}^{\infty} e^{-\varrho \tau}\Big[\frac{u^2(\tau)}{2}+y(\tau)-u(\tau)\Big]\, d\tau\\ 
		\textrm{subject to\ }&\frac{d{y}(t)}{dt}=\beta u(t)-{\delta}(t)y(t)\ \textrm{a.e.},\ y(0)=x_*> 0,\ u(t)\in U\rav [0;1].
		\end{align*}
		Here, $\varrho$ and $\beta$ are  constants and a function $\delta:\mathbb{R}\to\mathbb{R}$ is Lebesgue measurable,  locally summable, and $1$-periodic. To simplify, we will focus on the case $\beta\neq 0$.
		Let $(\hjk{y},\hjk{u})$ be a locally weakly overtaking optimal in this problem. 
		
		The Hamilton-Pontryagin function in this example is
		$$\psi\big(\beta u-\delta(t) x\big)-\lambda e^{-{{\varrho}} t}\big(\frac{u^2}{2}+x-u\big)$$
		with the adjoint equation  
		$\frac{d\psi(t)}{dt}=\lambda e^{-\varrho t}+\psi(t)\delta(t);$
		so,  its solutions are of the form 
		$$\psi(t)=e^{\int_{0}^t\delta(\tau)\,d\tau}(\psi(0)+\lambda \int_0^t e^{-\int_{0}^\tau(\varrho+\delta(s))\,ds}\,d\tau)\qquad\forall t\geq 0.$$
		
	  	{Define $R\rav \varrho + \int_{0}^1\delta(\tau)\,d\tau$.} 
   \newcorr{In the case $R>0$ the function  $\psi$ is bounded, $\hjk{J}$ is Lipschitz continuous and assumptions $(E'_{\sup})$ and $(E_{\sup})$ as well as  $(E'_{\inf})$ and $(E_{\inf})$ are fulfilled.} 
   \corr{It is easy to see that this problem satisfies $(H0)$--$(H6)$.  It follows that Corollary~\ref{72} is applied.}
		By  Corollary~\ref{72}, we must find all partial limits  of   $(\lambda_n {\psi}_n,{\lambda}_n)$ (as $\theta_n\uparrow\infty$) satisfying  $\psi_n(\theta_n)=0$ with $\lambda\downarrow\hjk{\lambda}$.
		Dividing this equality by $e^{\int_{0}^t\delta(\tau)\,d\tau}$, we obtain
	 \begin{align*}
	 \psi_n(0)+\lambda_n \int_0^{\theta_n} e^{-\int_{0}^\tau(\varrho+\delta(s))\,ds}\,d\tau=0.
	 \end{align*}	 
      Thus, 
       the pair $(\hjk{\psi}(0),\hjk{\lambda})$ is a convex combination of   partial limits of the sequence of
		$$ (\lambda_n \psi_n(0),\lambda_n)=(- \lambda^2_n\int_0^{\theta_n}  e^{-\int_{0}^\tau(\varrho+\delta(s))\,ds}\,d\tau,\lambda_n).$$ In particular, $\hjk{\psi}(0)$ is nonpositive.
    	
		In the case $R>0$, due to  $\int_0^{\theta_n}  e^{-\int_{0}^\tau(\varrho+\delta(s))\,ds}\,d\tau \sim e^{-R\theta_n}$, we obtain that the sequence of $\psi_n(0)$ is bounded. By $\hjk{\lambda}\in\{0,1\}$, we get  $\hjk{\lambda}=1$. This entails
	\begin{align*}
		\hjk{\psi}(t)e^{\varrho t}&=
		-e^{\int_{0}^t(\varrho+\delta(\tau))\,d\tau}\int_t^\infty e^{-\int_{0}^\tau(\varrho+\delta(s))\,ds}\,d\tau=
		-\int_0^\infty e^{-\int_{0}^\tau(\varrho+\delta(t+s))\,ds}\,d\tau\\
	\textrm{and\ }
	    \hjk{u}(t)&=\frac{1}{2}\max\big(0,\min(1,1+\hjk{\psi}(t)\beta e^{\varrho t}\big)\big)\\&
	    =\max\Big(0,\min\Big(1,\frac{1}{2}-\frac{\beta}{2}\int_0^\infty e^{-\int_{0}^\tau(\varrho+\delta(t+s))\,ds}\,d\tau\Big)\Big)
	\end{align*}	
		for almost all positive $t$.
		

		 Thus,  in this example  the condition \eqref{WAKKa} points to the unique solution of relations  \eqref{sys_x}--\eqref{maxH}; therefore  there can be at most
one locally weakly overtaking optimal process; in addition, this process must be generated by  $1$-periodic function.

    \end{example}
	
	The following two examples were inspired by the optimal growth theory:
	the 
	 Ramsey-like problem and
	the  Beltratti--Chichilnisky--Heal problem of  sustainable growth \cite{bch} with logistic renewal function.
	In addition,  these examples make it possible entirely and directly to apply Theorem~\ref{9}. 
	Along the road, we will consider a   set of tools and methods, different from the direct calculation of any motions,  costs, and their  gradients.  We will see that, on the one hand,  this road is not simple and user-friendly, and so these tools and methods should be improved. On the other hand, after proper preparation, its application is quick and comfortable.

	 To this aim, we prepare several  facts for control systems corresponding to some economic applications.
\begin{proposition}\label{done}
Let 
		$U\subset\mathbb{R}_+$ be interval with $\internary U\neq \varnothing$.
   		Let  continuous functions $g:\mathbb{R}\to\mathbb{R}$ and $g_0:U\to\mathbb{R}$  satisfy
   		\begin{align}\label{plus}
		g(0)=0,\ g''(x)<0,\ g'_0(\upsilon)<0, \textrm{and}\ 
		g''_0(0)>0\quad\forall x\in(0;\infty),\upsilon\in U.
		\end{align}
		Let  a number $\varrho\in\mathbb{R}$  and initial position  $x_*>0$ be given. 
   		
   		Assume that a pair $(\hjk{y},\hjk{u})$ is a  locally weakly overtaking optimal process to the following problem:
		\begin{align*}        
		\textrm{minimize\ }            
		&\int_{0}^{\infty} e^{-\varrho \tau} g_0(u(\tau))\, d\tau\\ 
		\textrm{subject to\ }&\frac{d{y}(t)}{dt}=g(y(t))-u(t)\ \textrm{a.e.},\\
		& y(0)=x_*>0,\ y(t)\in\mathbb{R},\ u(t)\in U,\ \Limsup_{\theta\uparrow\infty} \{\sign y(\theta)\}\subset 1.
		\end{align*}
      Assume also that   $\hjk{y}$ is positive.
   
   Then,
    \begin{enumerate}
        \item hypotheses $(H0)$--$(H4)$ are satisfied for $(\hjk{y},\hjk{u})$;
        \item  there exists a  solution $(\hjk{\psi},\hjk{\lambda}=1)$ to relations \eqref{sys_psi}, \eqref{maxH}, and \eqref{WAKKa} with $(\hjk{y},\hjk{u})$.  
	 \item   one has
         \begin{align}\label{1040}
         \hjk{u}(t)=\eta(e^{\varrho t}\hjk{\psi}(t)) \qquad \textrm{a.e.},
         \end{align} 
         here $\eta(q)=\arg\max_{u\in U}[-qu-g_0(u)]$ for all $q\in\mathbb{R}$;
        \item the pair $(\hjk{y},\hjk{p}\rav\hjk{\psi}e^{\varrho\cdot})$ solves
        the Hamiltonian system
         \begin{align}\label{ds}        
		\frac{dy(t)}{dt}=g(y(t))-\eta(p(t)),\ 
		\frac{d{p}(t)}{dt}=p(t)\big(\varrho-g'\big({y}(t)\big)\big),
		\end{align}
         	\item  either $\hjk{\psi}\equiv 0$ 
	       or $\hjk{\psi}$ is positive.
	 \end{enumerate}
     Furthermore, in the case of positive $\hjk{\psi}$ 
     \begin{enumerate}
         \item   the sign  of the arc  ${y}$ in unstable at the solution  $\hjk{y}$ to this system $\frac{dy(t)}{dt}=g(y(t))-\hjk{u}(t)$: 
         there exists a converging to $\hjk{y}$ sequence of  solutions $y_n$ to this system such that  inequalities $y_n(\theta_n)\leq 0$ hold for  a sequence of  positive $\theta_n$;
         \item the sign   of the arc $\hjk{y}$  in  unstable at the solution $(\hjk{y},\hjk{p})$ to \eqref{ds}.
     \end{enumerate}
   \end{proposition}
	\doc 
	    It is easy to see that $(H0)$--$(H1)$ are  fulfilled.

	    Put $R\rav1+\sup_{t\in\mathbb{R}_+} \hjk{y}(t)\in[-\infty;+\infty]$, $r\rav \inf_{t\in\mathbb{R}_+} \hjk{y}(t)\in[-\infty;+\infty]$. By condition on $\hjk{y}$, the function $g$ is continuously differentiable on $(0;R+1)$  and the map $t\mapsto\frac{dg}{dx}(\hjk{y}(t))$ is locally summable.  
	    
	    We claim that the hypotheses $(H2)$--$(H4)$ as well  $(E'_{\sup})$ and $(E_{\sup})$ are fulfilled. Indeed, in the case where 
	     there exists a positive  $t_0$ with $\hjk{y}(t_0)=r$, the function $g$ is bounded and differentiable on $[r-\epsi;+\infty)$ for a positive $\epsi$; therefore, the hypotheses $(H2)$--$(H4)$ are also satisfied with $\mathbb{G}\rav\mathbb{R}_+\times [r-\epsi;R+1]$.
	      In the case $\hjk{y}>r$, we put $\epsi\rav0$,
	    $\mathbb{G}\rav\{(t,x)\in\mathbb{R}_+\times\mathbb{R}\mid {\hjk{y}(t)+r} \leq 2x\leq 2R+2\}$. So, $(H2)$--$(H4)$ are verified.

	     Further,  if $f$ changes outside $G$, the pair $(\hjk{y},\hjk{u})$ will remain weakly overtaking optimal.
	      Redefining  $g$ on $(-\infty;0)$, we get that $g$ is bounded and continuous on $(-\infty;0]$. Since $\hjk{y}$ is positive, we have that $\int_{0}^t|\hjk{u}(t)|\,d\tau\leq x_*+\int_{0}^t{g}(\hjk{y}(\tau))\,d\tau$ for all positive $t$. At the same time, the concavity of $g$ leads to  $0\leq g(x)\leq |g(x_*)|+|g'(x_*)|\cdot|x-x_*|$  for all positive $x$. Therefore, one finds a locally summable function $M:\mathbb{R}_+\to\mathbb{R}_+$ such that $|g(x)|+|\hjk{u}(t)|\leq M(t)(1+|x|)$ for all $(t,x)\in\mathbb{R}_+\times\mathbb{R}$.  Therefore every solution $y(\cdot)\rav\rty(x,0,\hjk{u};\cdot)$ is defined on $\mathbb{R}_+$. Since
	      $\int_{0}^\theta f_0(\hjk{u}(\tau))\,d\tau$ is finite for all positive $\theta$,  the corresponding control process $(y,\hjk{u})$ is admissible; it follows that $\hjk{J}(x;\theta)=J(x,0,\hjk{u};\theta)=J(x_*,0,\hjk{u};\theta)$. So,  ${J}(x;\theta)$ is independent of $x$.
	      Since	      
 $\hjk{J}(x;\theta)$ is independent of $x$, the assumptions $(E'_{\sup})$ and $(E_{\sup})$ hold true.
		
		By Theorem~\ref{9},
		there exists a nonzero solution $(\hjk{\psi},\hjk{\lambda})$ to relations \eqref{sys_psi}, \eqref{maxH}, and \eqref{WAKKa}.

		Note that the set $\mathcal{C}_{\as}$ is  an interval unbounded above. Further, since
		$\hjk{J}(x;\theta)$ is independent of $x$,  for all $(x,\theta)\in\mathbb{R}\times\mathbb{R}_+$ one has
		$\hjk{\partial}^\infty_x \hjk{J}(x;\theta)=\varnothing$,
		$\hjk{\partial}_x \hjk{J}(x;\theta)=\{0\}$, and 
		$N(x,\hjk{J}(x;\theta);\epi \hjk{J}(\cdot;\theta))=\{0\}\times (-\mathbb{R})$. Then,
		from  \eqref{WAKKa} it follows that
		$$ -(\hjk{\psi}(0),\hjk{\lambda})\in N(\hjk{y}(0);\cl\mathcal{C}_{as})\times(-\mathbb{R}_+).$$
		 In particular, we have been proved that $\hjk{\psi}$ is nonnegative.
		
		Let us prove $\hjk{\lambda}>0$. Suppose it is false, $\hjk{\lambda}=0$. Then, $\hjk{\psi}$ is positive, Since  the value of $\hjk{\psi}(t)\upsilon$ would attain a minimum for each $t$, the control   $\hjk{u}(t)$  would also be minimal. Then, another admissible  control process $(y,u)$ satisfies $J(x_*,0,u;t)>J(x_*,0,0;t)$ for all large $t$. Therefore, the process  $(x_*,0)$ would be  a unique admissible control process. It means that $x_*$ would be zero, in contradiction to $x_*>0$. Thus, we have $\hjk{\lambda}>0$. 
		
		Due to $\hjk{\lambda}\in\{0,1\}$, we obtain $\hjk{\lambda}=1$.

		  Set $P\rav\{-\frac{dg_0(\upsilon)}{d\upsilon}\,|\,\upsilon\in U\}$. Since $g_0$ is continuously differentiable on interval $U$, $P$ is an interval too. Furthermore, $g_0$ is strongly convex, the map $U\ni \upsilon\mapsto -\frac{dg_0(\upsilon)}{d\upsilon}$ is continuous and increasing. Then,
		  the inverse map  is also continuous and increasing, this map can be extended by continuity to the nondecreasing map $\mathbb{R}_+\ni p\mapsto \eta(p)\in U.$  Consider the Legendre transform of
		 the convex function $g_0+{\imath}_{U}$, the map
		 $-\mathbb{R}_+\ni p\mapsto
		 \max_{\upsilon\in   U}(p\upsilon-g_0(\upsilon))$.
		 This function is  smooth and strongly convex, therefore, for all $p_0\in \mathbb{R}_+$ and $\upsilon_0\in U$, one has  $\eta(p_0)=-{g'_0}(\upsilon_0)$ iff $-p_0\upsilon_0-g_0(\upsilon_0)=\max_{\upsilon\in  U}(-p_0\upsilon-g_0(\upsilon))$ holds.
		 
		  Hence, on the one hand, from \eqref{maxH} and $\hjk{\lambda}=1$ it follows that  \eqref{1040}; on the other hand, from
		 \eqref{sys_psi} and $\hjk{\lambda}=1$ it follows that $\hjk{\psi}$ satisfies
		$
\frac{d{\psi}(t)}{dt}=-\frac{\partial \mathcal{H}}{\partial x}(\hjk{y}(t),\hjk{\psi}(t),t)$ with 
$\mathcal{H}(x,\psi,t)\rav \psi g(x)-\psi\eta(e^{\varrho t}\psi)-\lambda e^{-\varrho t} g_0(e^{\varrho t}\psi)$. Now, the following function 
$\hjk{p}(t)=e^{\varrho t}\psi(t)$
solves on $\mathbb{R}_+$ the equation
$\frac{d{p}(t)}{dt}=\varrho p(t)-\frac{\partial \mathcal{H}}{\partial x}(\hjk{y}(t),p(t),t)$. By \eqref{1040},  $\hjk{y}$ solves 
$\frac{dy(t)}{dt}=g(y(t))-\eta(p(t))$ and the pair $(\hjk{y},\hjk{p})$ solves \eqref{ds}.

		Assume that on finds  a nonnegative $t$ with $\hjk{\psi}(t)=0$.  Then, by \eqref{sys_psi}, the co-state arc $\hjk{\psi}$ as a solution to $\frac{d\psi(t)}{dt}=-\psi(t)\frac{\partial f(\hjk{y}(t))}{\partial x}$ is zero. So, $\hjk{\psi}\equiv 0$ and, by \eqref{maxH},  control $\upsilon=\hjk{u}(t)$ should minimize the strongly decreasing on $\mathbb{R}_+$ function $\upsilon\mapsto g_0(\upsilon)$; therefore,
		$\hjk{u}\equiv\max_{\upsilon\in U} \upsilon$.
		
		Consider the case where $\hjk{\psi}$ is positive. 
		
		Since
		$-\hjk{\psi}(0)$ lies in $N(x_*;\mathcal{C}_{\as}),$ the point  $x_*=\hjk{y}(0)$ lies on the boundary of $\mathcal{C}_{\as}.$
		Then,   it  is the minimal  of all positive  motions generated by $\hjk{u}$; it means that for  each  positive $\epsi$ small enough, the motion $\rty(x_*-\epsi,0,\hjk{u};\cdot)$ doesn't save its sign. Fix a such $\epsi$  with its motion $y_\epsi(\cdot)\rav\rty(x_*-\epsi,0,\hjk{u};\cdot)$.	
		
	 Further,  let $T_0$ be minimal time instance satisfying
		$y_\epsi(T_0)=0$.
		Together with $y_\epsi$ consider the solution $({\tilde{z}_\epsi}(\cdot),\tilde{p}(\cdot))$ to \eqref{ds} satisfying the initial conditions ${\tilde{z}_\epsi}(0)=x_*-2\epsi$, $\tilde{p}(0)=\hjk{\psi}(0)-\epsi.$ Define also $\tilde{q}(\cdot)\rav \tilde{p}(\cdot)e^{-\varrho \cdot}$.
		Consider 
		the sequence of  
		\(T_n\rav \sup\{t\geq 0\,|\,\tilde{q}(\tau)<\hjk{\psi}(\tau),\ 1/n<{\tilde{z}_\epsi}(\tau)<y_\epsi(\tau)\quad\forall \tau\in[0;t]\}\); this sequence is nondecreasing and bounded by $T_0$.
		On the one hand, for all positive $t<T_n$,  according to $\hjk{\psi}(t)>\tilde{q}(t)$, $1/n<{\tilde{z}_\epsi}(t)<y_\epsi(t)<\hjk{y}(t)$, and 
		$g'(\hjk{y}(t))\leq g'({\tilde{z}_\epsi}(t))\leq g'(1/n)$, we obtain
		\begin{align*}
		\frac{d(\tilde{q}(t)-\hjk{\psi}(t))}{dt}&=
		\hjk{\psi}(t)g'(\hjk{y}(t))-
		\tilde{q}(t)g'({\tilde{z}_\epsi}(t))\\
		&\leq (\hjk{\psi}(t)-\tilde{q}(t))g'({\tilde{z}_\epsi}(t))\leq (\hjk{\psi}(t)-\tilde{q}(t))g'(1/n).
		\end{align*}
		Hence one has $\hjk{\psi}(t)-\tilde{q}(t)\geq\epsi e^{-g'(1/n)t}$
		and 
		$\eta(\hjk{\psi}(t)e^{\varrho t})\geq
		\eta(\tilde{q}(t)e^{\varrho t})=\eta(\tilde{p}(t))$ for all positive $t\leq T_n$.
		On the other hand, for a such $t$, we have 
		\begin{align*}
		\frac{d(y_\epsi(t)-{\tilde{z}_\epsi}(t))}{dt}&=g({\tilde{z}_\epsi}(t))-g(y_\epsi(t))+\eta(\tilde{q}(t)e^{\varrho t})-\eta(\hjk{\psi}(t)e^{\varrho t})\\
		&\leq (y_\epsi(t)-{\tilde{z}_\epsi}(t))g'(1/n).
		\end{align*}
		Then, we obtain $y_\epsi(t)-{\tilde{z}_\epsi}(t)\geq\epsi e^{-g'(1/n)t}$ and  $y_\epsi(t)>{\tilde{z}_\epsi}(t)$ for all positive $t\leq T_n$. 
		According to $\tilde{q}(T_n)<\hjk{\psi}(T_n)$
		and  ${\tilde{z}_\epsi}(T_n)<{y}_\epsi(T_n)$, we have  ${\tilde{z}_\epsi}(T_n)=1/n$. Since the sequence of $T_n$  is nondecreasing and bounded,  ${\tilde{z}_\epsi}(T')$ is zero  for a positive  $T'$. Thus, ${\tilde{z}_\epsi}$ also doesn't save its sign for  positive $\epsi$ small enough.

		Accordingly, since $||\hjk{y}(0)-{\tilde{z}_\epsi}(0)||+||\hjk{\psi}(0)-\tilde{q}(0)||=||\hjk{y}(0)-{\tilde{z}_\epsi}(0)||+||\hjk{p}(0)-\tilde{p}(0)||$ can be chosen arbitrary small, the sign  of  ${y}$  is also  unstable at the solution $(\hjk{y},\hjk{p})$ to   \eqref{ds}. 
		
		Proposition~\ref{done} has been proved.
	\bo 	
		
    
 	Note that condition \eqref{plus}  is only part of the Inada conditions, typical conditions on the renewal (production) 
		and utility functions 
		in optimal growth models; see  \cite[(3.19a)-(3.19c)]{feichtinger} 
		and \cite[Assumptions 3]{bch}.
   In the following example, Ramsey-like problem, $g$ and $g_0$ satisfy all Inada conditions, including $g'(0+)=+\infty$. It  follows that the Pontryagin Maximum Principle is useless for $x=0$. In particular,   in this example hypotheses $(H5)$ and $(H6)$ are difficult to verify.

	\begin{example}\label{aaalike} 
	Fix  a number $\varrho\in\mathbb{R}$  and a  positive number $x_*$.  Take	\[g(x)\rav \sqrt{\max(x,0)}-x,\  g_0(\upsilon)\rav-\sqrt{\upsilon} \qquad \forall x\in \mathbb{R},\upsilon\in \mathbb{R}_+.\]
		Consider the following problem:
	\begin{align*}
	\textrm{maximize\ }            
	&\int_{0}^{\infty} e^{-{{\varrho}} \tau} \sqrt{{u(\tau)}}\, d\tau\\ 
	\textrm{subject to\ }&\frac{d{y}(t)}{dt}=\sqrt{{y(t)}}-y(t)-u(t)\ \textrm{a.e.},\\
	& y(0)=x_*>0,\ y(t)\in\mathbb{R},\ {u(t)\in U\rav\mathbb{R}_+},\ {\Limsup_
		{\theta\uparrow\infty} \{\sign y(\theta)\}\subset\{1\}}.
	\end{align*}
	Let us consider a locally weakly overtaking optimal process $(\hjk{y},\hjk{u})$ with a positive $\hjk{y}$.
	Now, all  assumptions of Proposition~\ref{done} satisfy. In particular, $(H1)$--$(H4)$ are fulfilled.

	Due to Proposition~\ref{done}, we know that there exists a corresponding to $(\hjk{y},\hjk{u})$  pair $(\hjk{\psi},\hjk{\lambda}=1)$. Since $\hjk{\psi}\equiv 0$ could lead to $\hjk{u}\equiv \sup\{\upsilon\, |\,\upsilon \in U\}=\infty$, we obtain that $\hjk{\psi}$ is positive and there exists a converging to $\hjk{y}$ sequence of  generated by $\hjk{u}$  solutions $y_n$ with $\sign y_n\not\equiv 1$. Notice that,  we might try to seek out
	all positive ${\psi}(0)$ and, calculating the corresponding solution $(y,\psi,u,1)$ to \eqref{sys_x}--\eqref{maxH}, verify the unstability of the sign  of ${y}$. However, since $\hjk{u}$ and $\hjk{\psi}$ are unknown, it is very difficult. 
	
	Thanks to last item of Proposition~\ref{done}, we may   seek out
	a solution $(y(\cdot),p(\cdot))$ with positive $p$ and the unstable  sign  of ${y}$ for the system
		\begin{align}\label{ds1}        
\frac{dy(t)}{dt}=\sqrt{y(t)}-y(t)-\frac{1}{4p^2(t)},\qquad 
\frac{d{p}(t)}{dt}=\Big(\varrho+1-\frac{1}{2\sqrt{y(t)}}\Big)p(t).
\end{align}		
Albeit  the control $\hjk{u}$ and the co-state arc $\hjk{\psi}(t)=\hjk{p}(t)e^{-\varrho t}$ are still unknown, in this way we must verify solutions
to just one dynamical system. Let's do it.

\begin{figure}[!ht]
	\begin{minipage}[h]{8cm}
		\center{\includegraphics[width=240px]{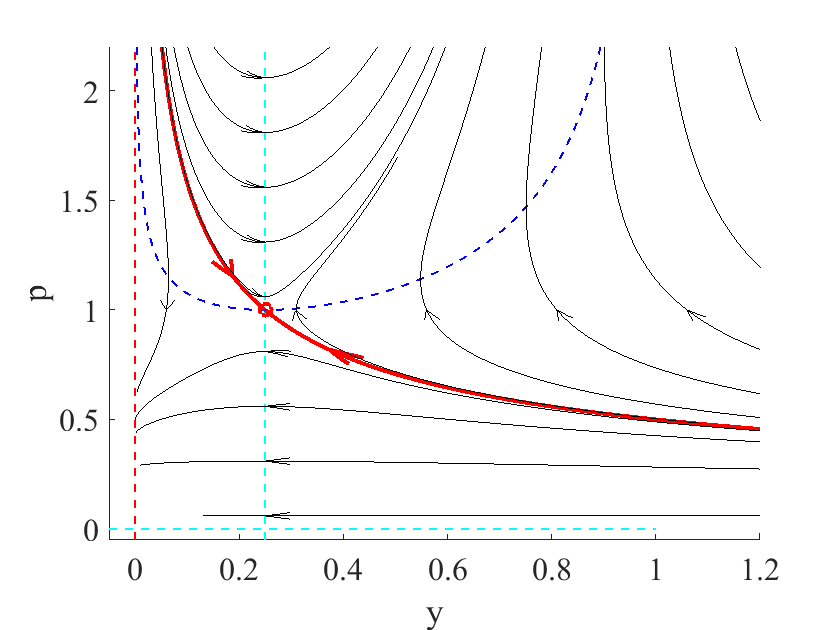}} \\(a) $\varrho=0$.
	\end{minipage}
	\hfill
	\begin{minipage}[h]{8cm}
		\center{\includegraphics[width=240px]{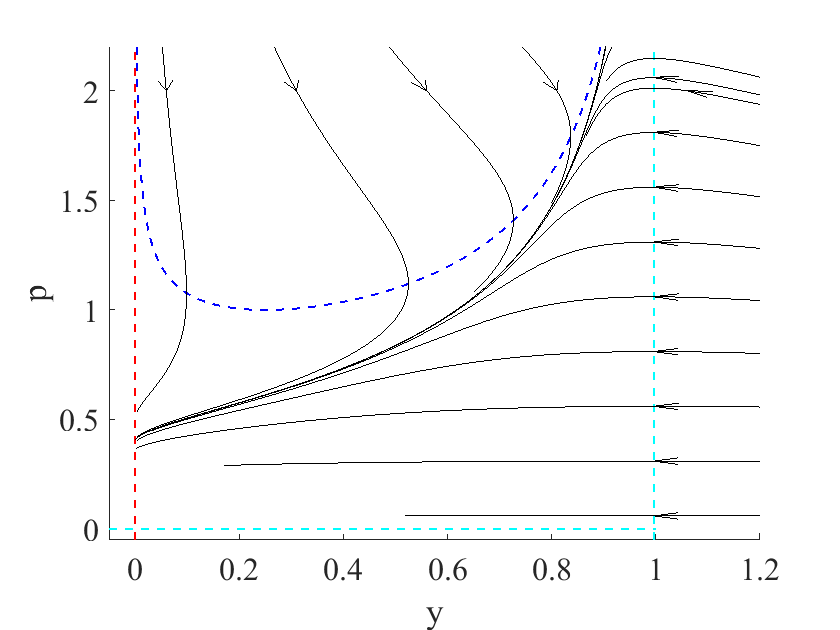}}\\ (b) $\varrho=-1/2$.
	\end{minipage}
	\caption{The typical phase diagrams for solutions $(y(\cdot),p(\cdot))$ to \eqref{ds1} if (a) $\varrho> -1/2$; (b) $\varrho\leq -1/2$.}
	\label{aaalike1}
\end{figure}

		At the beginning, consider  the case  $\varrho>-1/2$ (see Fig.~\ref{aaalike1}(a)).  Since $g_0$ and $-g$ are strongly convex on $\mathbb{R}_+$, 
there exists a  unique stationary point $(x_0,p_0)=((\varrho+1)^{-2}/4,(\varrho+1)/\sqrt{2\varrho+1})$ of dynamical system \eqref{ds} lying  in $\mathbb{R}^2$; furthermore, this point is a saddle point.
Then, the image of the path $t\mapsto(\hjk{y}(t),\hjk{p}(t))$, the set
$\{(\hjk{y}(t),\hjk{p}(t))\,|\,t\in\mathbb{R}\}$, is contained in the unstable manifold \cite{hartman} of \eqref{ds1}.
Hence, $(\hjk{y},\hjk{p})$ is
either  $\{(x_0,p_0)\}$ or  one of two unstable paths of the system,  converging to $(x_0,p_0)$ (see Fig.~\ref{aaalike1}(a)). 	So, in this case,  the transversality condition  \eqref{WAKKa} elicits a unique motion and for each weakly overtaking optimal process $(\hjk{y},\hjk{u})$ with positive motion $\hjk{y}$, its motion
 converges to $(\varrho+1)^{-2}/4$ as $t\uparrow \infty$.	

 In the case  $\varrho\leq -1/2$, the system \eqref{ds1} has no solution
 $(y,p)$ with 
 positive  $\hjk{y}(\cdot)$ (see Fig.~\ref{aaalike1}(b)); therefore,  in the case    $\varrho\leq -1/2$ there exists no locally weakly overtaking optimal process in the optimal control problem.
 \end{example}

	\begin{example}\label{aaa} 
		Again consider  a number $\varrho\in\mathbb{R}$  and a  positive number $x_*$.  Now take	\[g(x)\rav 2x-x^2,\  g_0(\upsilon)\rav-\sqrt{\upsilon} \qquad \forall x\in \mathbb{R},\upsilon\in \mathbb{R}_+.\]
		It gives  the following problem:
		\begin{align*}
		\textrm{maximize\ }            
	&\int_{0}^{\infty} e^{-{{\varrho}} \tau} \sqrt{{u(\tau)}}\, d\tau\\ 
	\textrm{subject to\ }&\frac{d{y}(t)}{dt}={2y(t)-y^2(t)}-u(t)\ \textrm{a.e.},\\
	& y(0)=x_*>0,\ y(t)\in\mathbb{R},\ {u(t)\in U\rav [0;1]},\ {\Limsup_
		{\theta\uparrow\infty} \{\sign y(\theta)\}\subset\{1\}}.
	\end{align*}
	Note that a locally weakly overtaking optimal process  $(\hjk{y},\hjk{u})$ with positive $\hjk{y}$ satisfies the conditions of Proposition~\ref{done}. In particular, $(H1)$--$(H4)$ hold true.
   	
	Due to Proposition~\ref{done}, we know that either $\hjk{u}\equiv 1$ (with $\hjk{\lambda}=1$, $\hjk{\psi}=\hjk{p}=0$), or there exists sequence of  solutions $(y_n,p_n)$ to system
			\begin{align}\label{ds2}        
	\frac{dy(t)}{dt}=2y(t)-y^2(t)-\frac{1}{\max(1,4p^2(t))},\qquad 
	\frac{d{p}(t)}{dt}=(\varrho-2+2y(t))p(t),
	\end{align}		
	 converging to $(\hjk{y},\hjk{p})$  such that, for all natural $n$, one finds a positive  $t$ with
	 $y_n(t)=0$. So, we must seek out the  solutions $(\hjk{y},\hjk{p})$ to \eqref{ds2} with positive $\hjk{y}$ such that either $\hjk{p}\equiv 0$, or $\hjk{p}> 0$ and  the  sign of $\hjk{y}$ is unstable in \eqref{ds2}.

	\begin{figure}[ht]
	\center{\includegraphics[width=340px]{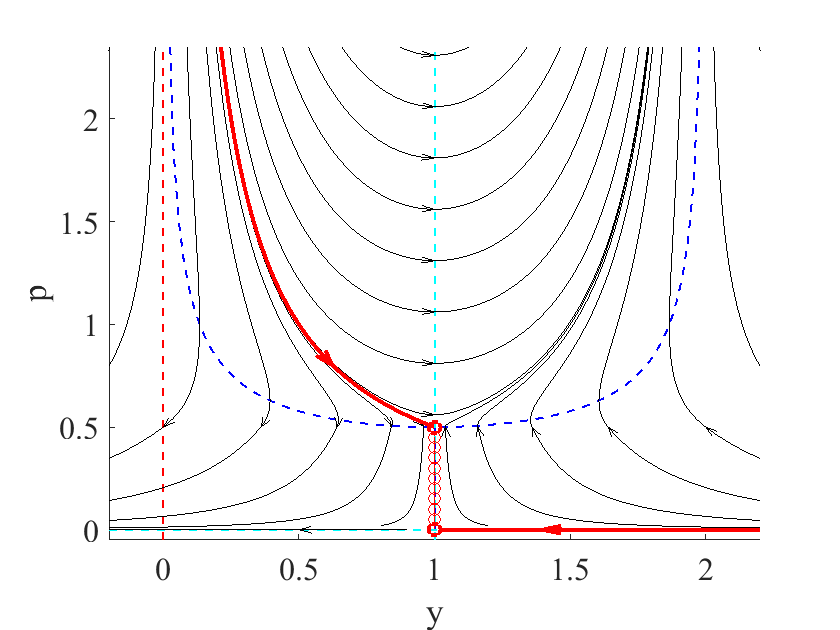}}
	\caption{
		The phase diagram for solutions $(y(\cdot),p(\cdot))$ to \eqref{ds2} in the case $\varrho=0$.
	}\label{aaa0}
\end{figure}

     At the beginning, 
     introduce a solution $({y}^0,0)$ to \eqref{ds2} with  $\hjk{p}\equiv 0$ and $y^0>0$. The direct calculations give
     ${y}^0(t)=\frac{(x_*-1)(t+1)+1}{(x_*-1)t+1}$ for all $t\in\mathbb{R}$; in particular,  ${y}^0\equiv 1$ if $x_*=1$.
          Notice also that, in the case  $|\varrho|<2$, $\varrho\neq 0$,
     there exists a  positive  stationary point 
     $(x_0,p_0)=(1-\varrho/2;\frac{1}{\sqrt{4-\varrho^2}})$; furthermore, this point is  unique in $(0;+\infty)\times(0;+\infty)$ and  is a saddle point. Hence, there exists two  unstable paths of this Hamiltonian system,  converging to $(x_0,p_0)$ (see Fig.~\ref{aaa-1} and~\ref{aaa1}); denote these paths by $y_{\rm{left}}$ and $y_{\rm{right}}$.

     In the case $\varrho=0$ (see  Fig.~\ref{aaa0}) the stationary points of \eqref{ds2}  constitute the interval $\{1\}\times[0;1/2]$ and for all $x_*<1$ there exists a unique path $(y_{\downarrow},p_{\downarrow})$ of  \eqref{ds2}, converging to the point $(1,1/2)$. In this case define 
     $$S\rav\{(y_{\downarrow}(t),p_{\downarrow}(t))\in\mathbb{R}_+\times\mathbb{R}_+\,|\,t\in\mathbb{R}\}\cup([1;\infty)\times\{0\})\cup(\{1\}\times[0;1/2]).$$
     Later we will prove that the image  of $t\mapsto(\hjk{y},\hjk{p})$ is contained in $S$, in particular, $\hjk{y}$ is $y_{\downarrow}$ if $x_*<1$  and  ${y}^0$ otherwise.

     In the case $\varrho<0$ (see  Fig.~\ref{aaa-1}(a,b)) there exists a unique path $(y_{\downarrow},p_{\downarrow})$ of  \eqref{ds2}, converging to the point $(1,0)$. In this case define 
$$S\rav\{(y_{\downarrow}(t),p_{\downarrow}(t))\in\mathbb{R}_+\times\mathbb{R}_+\,|\,t\in\mathbb{R}\}\cup([1;\infty)\times\{0\}).$$
Later  we will prove  that      the image  of $t\mapsto(\hjk{y},\hjk{p})$ is contained in $S$; in particular, $\hjk{y}$ is $y_{\downarrow}$ if $x_*<1$  and  ${y}^0$ otherwise.

\begin{figure}[!ht]
	\begin{minipage}[h]{8cm}
		\center{\includegraphics[width=240px]{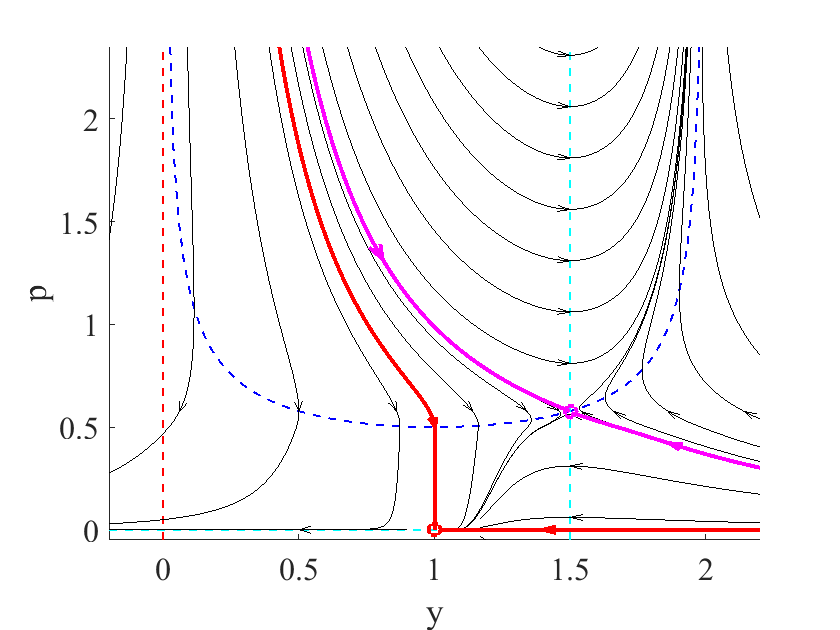}} \\(a) $\varrho=-1$.
	\end{minipage}
	\hfill
	\begin{minipage}[h]{8cm}
		\center{\includegraphics[width=240px]{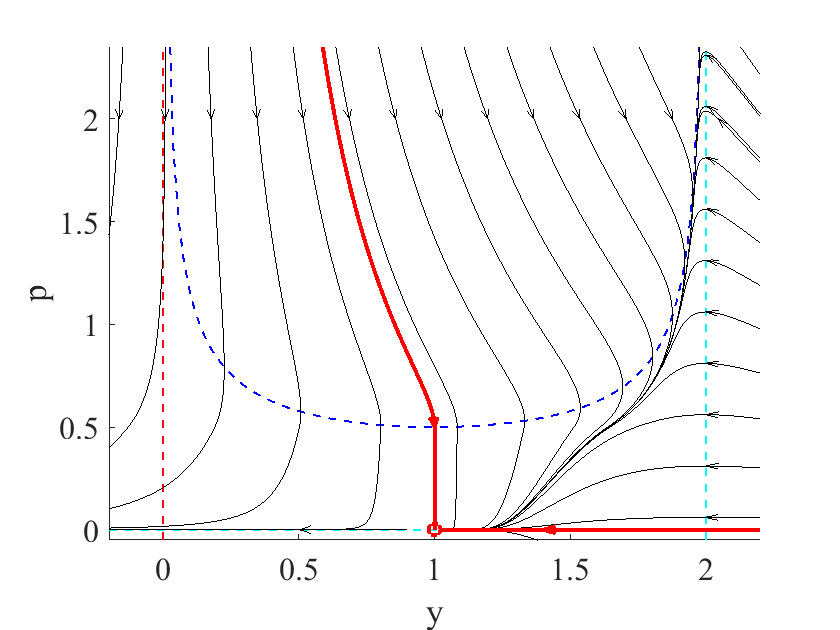}}\\ (b) $\varrho=-2$.
	\end{minipage}
	\caption{The typical phase diagrams for solutions $(y(\cdot),p(\cdot))$ to \eqref{ds2} if (a) $\varrho\in(-2;0)$; (b) $\varrho\leq -2$.}
	\label{aaa-1}
\end{figure}

       In the case $\varrho\in(0;2)$ (see  Fig.~\ref{aaa1}(a))  the path $(y_{\rm{right}},p_{\rm{right}})$ is continuated to $-\mathbb{R}_+$ and its image connects $(1;0)$ and the saddle point $(x_0,p_0)$. In this case define 
\begin{align*}
    S\rav&\{(y_{\rm{left}}(t),p_{\rm{left}}(t))\in\mathbb{R}_+\times\mathbb{R}_+\,|\,t\in\mathbb{R}\}
\cup\{(x_0,p_0)\}\\&\cup\{(y_{\rm{right}}(t),p_{\rm{right}}(t))\in\mathbb{R}_+\times\mathbb{R}_+\,|\,t\in\mathbb{R}\}\cup([1;\infty)\times\{0\}).
\end{align*}
Later  we will prove  that      the image  of $t\mapsto(\hjk{y},\hjk{p})$ is contained in $S$, in particular, 
$\hjk{y}$ is $y_{\rm{right}}$ if $x_0<x_*<1$, $y_{\rm{left}}$ if $x_0>x_*$, $x_0$ if $x_0=x_*$,  and  ${y}^0$ otherwise.

       In the case $\varrho\geq 2$ (see  Fig.~\ref{aaa1}(b)) consider the union of all images of the paths $(\check{y},\check{p})$ with positive $\check{p}$ that  intersect the axe $\{0\}\times(0;+\infty)$. This set is open and
       connected subset of $\mathbb{R}\times(0;+\infty)$. 
        Therefore the right boundary of this subset is the union of the images of paths.
         Since the stationary point $(1,0)$ is unstable and there exists no stationary point on $(0;\infty)^2$, this boundary is the image of a certain path $t\mapsto(y_{\uparrow},p_{\uparrow})$. So, in the case $\varrho\geq 2$ define 
$$S\rav\{(y_{\uparrow}(t),p_{\uparrow}(t))\in\mathbb{R}_+\times\mathbb{R}_+\,|\,t\in\mathbb{R}\}\cup([1;\infty)\times\{0\}).$$
We will prove  that      the image  of $t\mapsto(\hjk{y},\hjk{p})$ is contained in $S$, in particular, 
$\hjk{y}$ is $y_{\uparrow}$ if $x_*<1$ and  ${y}^0$ otherwise.

\begin{figure}[!ht]
	\begin{minipage}[h]{8cm}
		\center{\includegraphics[width=240px]{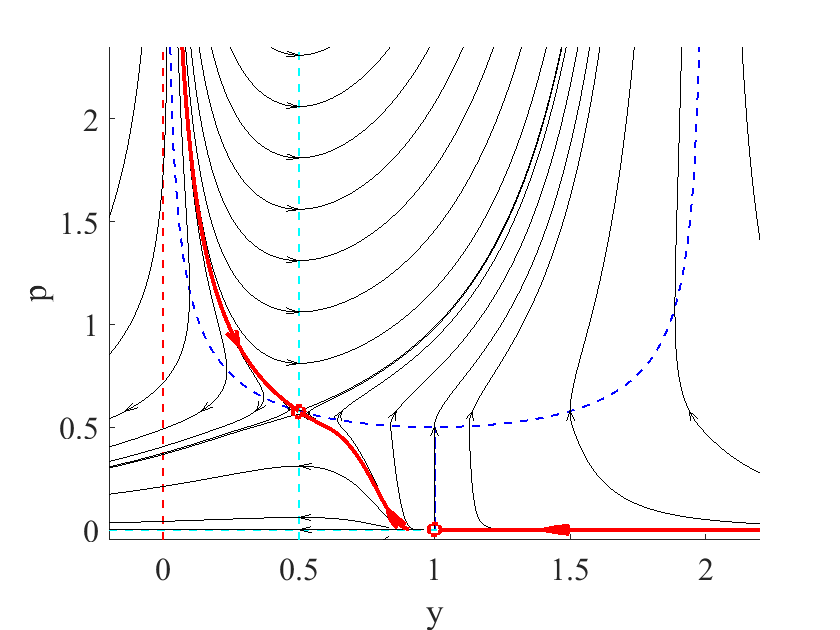}} \\(a) $\varrho=1$.
	\end{minipage}
	\hfill
	\begin{minipage}[h]{8cm}
		\center{\includegraphics[width=240px]{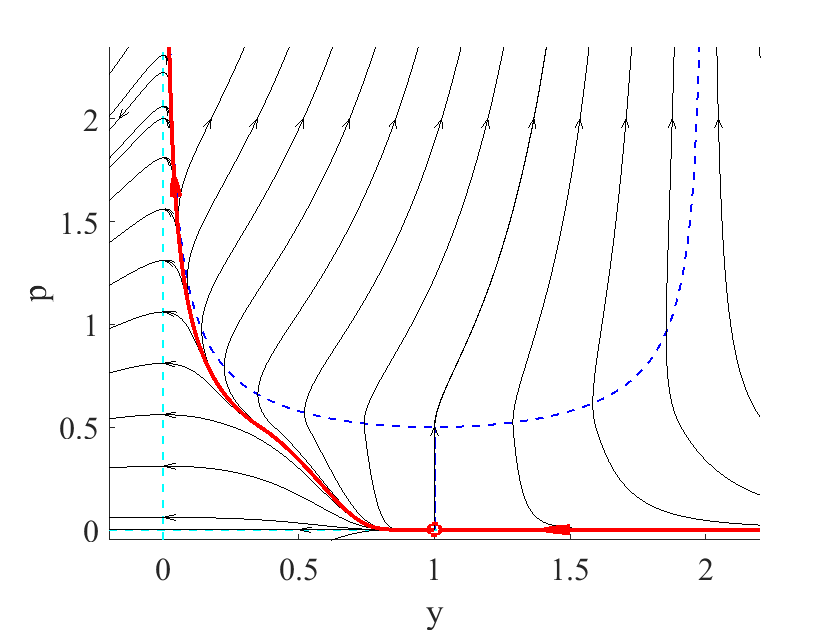}}\\ (b) $\varrho=2$.
	\end{minipage}
	\caption{The typical phase diagrams for solutions $(y(\cdot),p(\cdot))$ of \eqref{ds2} if (a) $\varrho\in(0;2)$; (b) $\varrho\geq 2$.}
	\label{aaa1}
\end{figure}
     
     At last, we begin to prove that  the image  of $t\mapsto(\hjk{y}(t),\hjk{p}(t))$ is contained in $S$ in 
     all five cases.
     Indeed, on the one hand, consider  any solution $(y,p)$ to  \eqref{ds2} such that $(x_*,p(0))$ is  to the left of  $S$. Then, $(y(t),p(t))$ is located to the left of  $S$ for all $t$ and, for large $t$, $p(t)$ is closed to zero and $y(t)$ is negative. Since $\hjk{p}$ must be nonnegative,  $(y,p)$ can't be $(\hjk{y},\hjk{p})$.
     On the other hand,  consider any solution $(y,p)$ to  \eqref{ds2} such that $(x_*,p(0))$ to the right of $S$. For all solutions $(\check{y},\check{p})$ at to $(y,p)$  its image is not intersects with $S$. In particular, the sign of $\check{y}$ as well as the sign of ${y}$ is positive for all positive $t$. It  means that the sign of $y$ is stable. So,  since $p$ is positive,  $(y,p)$  can't be $(\hjk{y},\hjk{p})$ again. Thus,
     we have proved that the image  of $t\mapsto(\hjk{y},\hjk{p})$ is contained in $S$. 
     
     So,  for all initial position $x_*>0$ and every discount rate $\varrho$ there can be at most one  locally weakly overtaking optimal process, because 
     there exists at most one
      solution to the Hamiltonian system satisfying the necessary conditions of Proposition~\ref{done}.
     Since these conditions are the direct consequence of \eqref{WAKKa}, for every
     locally weakly overtaking optimal process to this control problem the
     corresponding relations \eqref{sys_psi}-\eqref{maxH} with boundary condition \eqref{WAKKa} is a complete system of necessary
     conditions. 
     
     Finally, notice that for $\varrho\leq 0$ the motion ${y}^0$ and ${y}_{\downarrow}$ are overtaking optimal because $\hjk{u}=1$ for all sufficiently large $t$. Besides, in the case $\varrho\in(-2;0)$ (see Fig.~\ref{aaa-1}(a)) the constant motion $x_0$ as well as    converging  to  $x_0$ motions $y_{\rm{left}}$ and $y_{\rm{right}}$ can't be  locally weakly overtaking optimal.
     
	\end{example}
	
	The remaining examples illustrate the  borderline of consistent with \eqref{maxH} conditions on  co-state arcs  in the case of the lack of asymptotic constraints.
	Before  we proceed   further, we investigate system \eqref{sys_x}--\eqref{maxH},   corresponding to both examples.
	
	Fix a finite-dimensional real vector space $\mathbb{X}$ and a $C^2$-smooth function $S:\mathbb{R}\times\mathbb{X}\to\mathbb{R}$.  
	For  each positive $\theta$ and points  $x_*,\ x_\theta\in\mathbb{X}$,
	consider the following auxiliary problem:
	\begin{align*}      
	\textrm{minimize\ }            
	&\int_{0}^{\theta}\Big[ \frac{1}{4}\|u(\tau)\|^2+
	\frac{\partial S}{\partial x}\big({\tau},y(\tau)\big)e^{-\tau}u(\tau)+
	\frac{\partial S}{\partial t}\big({\tau},y(\tau)\big)\Big]\, d\tau\\
	\textrm{subject to\ }&\frac{d{y}(t)}{dt}=e^{-t}u(t),\ t\in[0;
	\theta]\ \textrm{a.e.},\\
	& y(0)=x_*,\quad y(\theta)=x_\theta,\quad y(t)\in\mathbb{X},\quad u(t)\in U\rav\mathbb{X}.
	\end{align*}
	
	\begin{subequations}
		Consider  a solution  $({\psi},{\lambda})$ (on $[0;\theta]$) to relations \eqref{sys_psi}--\eqref{maxH} 
		corresponding to a given admissible  
		process $({y},{u})$ of this  auxiliary problem.   It follows   from \eqref{sys_psi} that
		\[  \frac{d\psi(t)}{dt}={\lambda}
		\frac{\partial^2 S}{\partial x^2}\big({t},{y}(t)\big)e^{-t}u(t)+{\lambda}
		\frac{\partial S}{\partial t\partial x}\big({t},{y}(t)\big)=
		{\lambda}\frac{d}{dt}\Big(\frac{\partial S}{\partial x}\big({t},{y}(t)\big)\Big)\]
		for almost all $t\in[0;\theta]$. Then,  one can find a $C\in\mathbb{X}$ satisfying ${\psi}(t)=\lambda\frac{\partial S}{\partial x}(t,{y}(t))+{C}$ for all $t\in[0;\theta]$. 
		Because $u(t)$ minimizes 
		$\frac{\lambda}{4}\|\upsilon\|^2+
		e^{-t}\big(\lambda\frac{\partial S}{\partial x}({t},y(t))-\psi(t)\big)\upsilon=
		\frac{\lambda}{4}\|\upsilon\|^2-e^{-t}C\upsilon$ over all $\upsilon\in\mathbb{X}$,  we have $\lambda>0$. Then, putting $\lambda=1$, one has
		\begin{eqnarray}
		{\psi}(t)=\frac{\partial S}{\partial x}(t,{y}(t))+{C},\quad{u}(t)=2e^{-t}{C},\quad {y}(t)=x_*+(1-e^{-2t}){C},
		\label{946}\\
		J(x_*,0,{u};t)-S(t,y(t))+S(0,x_*)=\frac{1-e^{-2t}}{2}||{C}||^2=\frac{||{y}(t)-x_*||^2}{2(1-e^{-2t})}
		\label{953}
		\end{eqnarray}  
		for all $t\in[0;\theta]$. In particular, since $J(x,0,{u};t)-S(t,\rty(x,0,{u},t))+S(0,x)$ is independent of $x\in\mathbb{X}$, we also have   
		\begin{equation}\label{951}
		\frac{\partial J}{\partial x}(y(0),0,{u};t)=\frac{\partial S}{\partial x}(t,y(t))-
		\frac{\partial S}{\partial x}(0,x_*)=\psi(t)-\psi(0)\qquad \forall t\in[0;\theta].
		\end{equation}
		Note also that no co-state arc satisfies  relations  \eqref{sys_psi}--\eqref{maxH} with ${\lambda}=0$.
	\end{subequations} 
	
	Having all required formulae,
	let us finish considering this auxiliary problem and return to examples of infinite-horizon control problems.
	
	The following example will show that condition \eqref{AK} might  fail if       the gradient  of the limit of $\hjk{J}$ does not coincide with the limit of gradients of $\hjk{J}$  at $x_*$ (cf. \eqref{4004}).
	
	\begin{example}\label{aa}
		Assume that   the function  \(\mathbb{X}\ni x\mapsto\limsup_{\theta\uparrow\infty} S(\theta,x)\)
		attains its minimum  at the point $x_*=0$.

		Then, the assumptions $(E'_{\sup})$ and $(E_{\sup})$ hold true and the admissible control process $(\hjk{y},\hjk{u})=(0,0)$ is weakly overtaking optimal in the problem
		\begin{align*}        
		\textrm{minimize\ }            
		&\int_{0}^{\infty}\Big[ \frac{1}{4}\|u(\tau)\|^2+ 
		\frac{\partial S}{\partial x}\big({\tau},y(\tau)\big)e^{-\tau}u(\tau)+
		\frac{\partial S}{\partial t}\big({\tau},y(\tau)\big)\Big]\, d\tau\\
		\textrm{subject to\ }&\frac{d{y}(t)}{dt}=e^{-t}u(t)\ \textrm{a.e.},\\
		& y(0)=0,\ y(t)\in\mathbb{X},\ u(t)\in U=\mathbb{X}.
		\end{align*}
		Hence there exists a co-state arc $\hjk{\psi}\in C(\mathbb{R}_+,\mathbb{X}^*)$ satisfying the corresponding to $(\hjk{y},\hjk{u})$ relations \eqref{sys_psi}-\eqref{maxH} with $\hjk{\lambda}=1$; moreover, \eqref{946} holds for a  certain $\hjk{C}$.  It follows from $\hjk{u}\equiv 0$ that $\hjk{C}=0$ and $\hjk{\psi}(t)=\frac{\partial S}{\partial x}(t,0)$ for all positive $t\geq 0$.
		
		Consider the transversality condition \eqref{AK}. By
		$\frac{\partial \hjk{J}}{\partial x}(0;\theta)=\psi(\theta)-\psi(0)$,
		the co-state arc $\hjk{\psi}$ satisfies this condition iff the co-state arc
		$\hjk{\psi}(\theta)=\frac{\partial S}{\partial x}(\theta,0)$ converges to zero as $\theta\uparrow \infty$.
		Note that, in this example, condition~\eqref{AK} is equivalent  to Shell's condition \cite{shell,sagara}:  $\psi(\theta)\to 0$. By comparison,  $\frac{\partial S}{\partial x}(\theta,0)=0$ holds for all $\theta$ large enough iff  the
		Arrow-like condition \cite{ssbook,sagara} holds:
		\(\displaystyle \limsup\nolimits_{\theta\uparrow\infty}\hjk{\psi}(\theta)(\hjk{y}(\theta)-{y}(\theta))\geq 0\)
		for all admissible control processes $(y,u)$. 
		
		So, in Example~\ref{aa}, the corresponding to a weakly overtaking optimal process $(\hjk{y},\hjk{u})=(0,0)$ relations \eqref{sys_psi}--\eqref{maxH} have  a solution satisfying the transversality condition \eqref{AK} iff the function $S$ satisfies the following additional asymptotic assumption: the gradients
		$\frac{\partial S}{\partial x}(\theta,0)$ converge to zero as $\theta\uparrow\infty.$
		For instance, this  is false if we take \[\mathbb{X}\rav\mathbb{R},\qquad\mathbb{R}_+\times\mathbb{R}\ni(t,x)\mapsto S(t,x)\rav e^{-t}\sin(e^{t}x)-e^{-x^2}.\]
		
		In this case,  the cost functional $J$ and all its derivatives are bounded for all control processes. In particular, there could not be $\lambda=0$, i.e., 
		all solutions to the Pontryagin Maximum Principle are not abnormal whenever $x_*$. Furthermore, the corresponding infinite-horizon problem has the unique overtaking optimal  process 
		(moreover,  a unique strongly optimal \cite{car1}, a unique classical optimal \cite{slovak}, and a unique (O)-optimal \cite{stern}  process), this process has a unique (up to a positive factor) solution to  \eqref{sys_x}--\eqref{maxH}, and the corresponding transversality condition \eqref{AK} is well defined;
		after all, this condition is not  consistent with \eqref{maxH}. Thus, the necessity of \eqref{AK} depends primarily on the asymptotics of $\frac{\partial \hjk{J}}{\partial x}$ rather than the qualitative properties of system  \eqref{sys_x}--\eqref{maxH}.
	\end{example}

	The dynamics and the integrand  in the last example are linear in $x$, whilst the functions  $S$ and $\frac{\partial S}{\partial x}$ are periodic. It is enough that for a given $x_*$  in the corresponding infinite-horizon control problem the dimension of the set of optimal processes coincides with $\dim \mathbb{X}$. 
	By this reason, 
	their co-state arcs at $t=0$ also form a ball in $\mathbb{X}^*$.  Inclusion \eqref{WAKKa} as a unified necessary transversality condition  has to 
	take this into account.
	\begin{example}\label{1}
		Put $\mathbb{X}\rav\mathbb{R}^2$. Define the map $S$ as follows: 
		\[\mathbb{R}_+\times\mathbb{R}^2\ni(t,x_1,x_2)\mapsto S(t,x_1,x_2)\rav {x}_1\sin(t)-{x}_2\cos(t).\]  
		Then, the corresponding infinite-horizon control problem is	
		\begin{align*}    
		\textrm{minimize\ }            
		&\int_{0}^{\infty}\Big[ \frac{\|u(\tau)\|^2}{4}+(e^{-\tau}{u}_1(\tau)+{y}_2(\tau))\sin(\tau)\\
		&\qquad\qquad\qquad\qquad+({y}_1(\tau)-e^{-\tau}{u}_2(\tau))\cos(\tau)
		\Big]\, d\tau\\
		\textrm{subject to\ }&\frac{d{y_1}(t)}{dt}=e^{-t}u_1(t), \frac{d{y_2}(t)}{dt}=e^{-t}u_2(t)\ \textrm{a.e.\ }t\geq 0,\\
		&(y_1,y_2)(0)=x_*,\ (y_1,y_2)(t)\in\mathbb{R}^2,\ (u_1,u_2)(t)\in U=\mathbb{R}^2.
		\end{align*}
		
		Consider a weakly overtaking optimal process $(\hjk{y},\hjk{u})$   in this problem. Note that $(H0)$--$(H6)$ and $(E'_{\sup})$ are fulfilled.
		By Theorem~\ref{9}, there exists a nonzero solution $(\hjk{\psi},\hjk{\lambda})$ to system \eqref{sys_psi}--\eqref{maxH} and \eqref{WAKKc}, corresponding to this process. 
		Note that for each positive $\theta$, this pair is also the solution to this system for the auxiliary control problem  with $x_\theta=\hjk{y}(\theta).$
		Then, first, $\hjk{\lambda}$ is positive, and one can take $\hjk{\lambda}=1$; second, by \eqref{946},
		one find a  $\hjk{C}\in\mathbb{X}^*$ satisfying
		$\hjk{\psi}(t)=\frac{\partial S}{\partial x}(t,\hjk{y}(t))+\hjk{C}$ for all nonnegative $t$. 
		Hence, by \eqref{951}, condition \eqref{WAKKc} is equivalent to
		\[-\frac{\partial S}{\partial x}(0,x_*)-\hjk{C}=-\hjk{\psi}(0)\in \co\big\{(\sin\theta,-\cos\theta)\,\big|\,\theta\in[0;2\pi]\big\}-\frac{\partial S}{\partial x}(0,x_*),\] i.e.,
		$||\hjk{C}||\leq 1$.
		Thus, according to \eqref{WAKKc}, 
		every weakly overtaking optimal process  satisfies \eqref{946} with some $\hjk{C}$, $||\hjk{C}||\leq 1$.
		
		We claim that each process with this property is a weakly overtaking optimal. 
		Fix an admissible 
		process $(\bar{y},\bar{u})$
		satisfying \eqref{946} for some $\bar{C}$, $||\bar{C}||\leq 1$.
		Consider another admissible control process $(y,u)$.
		By  \eqref{953}, the function
		$J(x_*,0,\bar{u};\theta)-S(\theta,\bar{y}(\theta))+S(0,x_*)$  converges to $||\bar{C}||^2/2$  as $\theta\uparrow\infty$. 
		Then, it is necessary to find a $\phi\in[0;2\pi]$ satisfying 
		\[\limsup_{n\to\infty}\big[J(x_*,0,u;\phi+2\pi n)-S(\phi,x_*+\bar{C})+S(0,x_*)-{||\bar{C}||^2}/{2}\big]\geq 0.\]
		Note that $J(x_*,0,u;\theta)$ tends to infinity as $\theta\uparrow\infty$ if the total variation of $y$ is unbounded. Therefore, one can assume that the motion $y$ converges  to a $x_*+C_\infty\in\mathbb{X}$ as $t\to\infty$. 
		For every positive $\theta$, there exists an admissible control process $(y_\theta,u_\theta)$ optimal in the auxiliary problem with $x_\theta=y_\theta(\theta)$; 
		in particular, $J(x_*,0,u;\theta)\geq J(x_*,0,u_{\theta};\theta)$. Hence, by \eqref{953}, it follows from $y_\theta(\theta)\to C_\infty$ that one has
		\[\limsup_{n\to\infty}J(x_*,0,u;\phi+2\pi n)\geq S(\phi,
		x_*+C_\infty
		)-S(0,x_*)+
		{||C_\infty||^2}/{2}.\]  
		Then, 
		we must find a $\phi\in[0;2\pi]$ so that the number
		\[
		S(\phi,x_*+C_\infty)-S(\phi,x_*+\bar{C})+\frac{||C_\infty||^2-||\bar{C}||^2}{2}
		=S(\phi,C_\infty-\bar{C})+\frac{||C_\infty||^2-||\bar{C}||^2}{2}\]
		is nonnegative. Note that for each vector $z\in\mathbb{X}$, one finds a $\phi\in[0;2\pi]$ such that $S(\phi,z)=||z||$. Therefore, it is required  to prove only 
		$2||x-\bar{C}||\geq ||\bar{C}||^2-||x||^2=\big(\|\bar{C}\|-\|x\|\big)\big(\|\bar{C}\|+\|x\|\big)$ for all vectors $x=C_\infty\in\mathbb{R}^2$.
		The latter  is evident at all $x$ where $||x||+||\bar{C}||\leq 2$,  while at vectors $x$ where $||x||>2-||\bar{C}||$ it follows from $||\bar{C}||^2\leq 1\leq (2-||\bar{C}||)^2<||x||^2$ because of $||\bar{C}||\leq 1$. 
		
		Thus, it is shown that  every admissible control process satisfying \eqref{946} with some $\bar{C}$, $||\bar{C}||\leq 1$,
		is weakly overtaking optimal.  Then,  each co-state arc satisfying \eqref{WAKKa} generates the unique  weakly overtaking optimal process and each weakly overtaking optimal process  possesses its own co-state arc satisfying \eqref{WAKKa}.

		Summing up,  for every  weakly overtaking optimal process in this example, there exists a unique co-state arc satisfying \eqref{maxH}. Therefore, each  necessary condition on co-state arcs must either satisfy each of them
		or distinguish them. Due to  the linearity  of the dynamics and integrand in $x$, any  necessary condition depending only  on a co-state arc is satisfied  for all such co-state arcs. In this example, all such co-state arcs $\psi$ satisfy  $||\psi(0)+(0,1)||\leq 1$. Since
		inclusion \eqref{WAKKa} also points to the same set, in this example this inclusion    becomes  the tightest of all boundary conditions  on the co-state arcs consistent with \eqref{maxH}.
	\end{example}
	
	\section{Subdiffentials of lower and upper limits of scalar functions}
\label{e}

Let a family of maps ${W}_\theta$,  $\theta\geq 0$, from a finite-dimensional  
space   $\mathbb{X}$ to $\mathbb{R}\cup\{+\infty\}$ be given.
Define the maps $W_{\inf}$  and $W_{\sup}$ from $\mathbb{X}$ to $\mathbb{R}\cup\{-\infty,+\infty\}$ by the rules
\begin{equation}\label{infsup}W_{\inf}(x)\rav \liminf_{\theta\uparrow\infty} {W}_\theta (x),\qquad
W_{\sup}(x)\rav \limsup_{\theta\uparrow\infty} {W}_\theta (x)
\qquad\forall x\in\mathbb{X}.
\end{equation}
 
 For every nonempty subset 
 ${{\mathcal{X}}}$  of a finite-dimensional  
space   $\mathbb{X}$, define the maps 
$\lscinf{\mathcal{X}} W$ and $\lscsup{{\mathcal{X}}} W$ from $\mathbb{X}$ to $\mathbb{R}\cup\{-\infty,+\infty\}$ as follows:
\begin{equation}\label{infsupOmega}
(\lscinf{\mathcal{X}} W)(\check{x})\rav \lsc \liminf_{\substack{\theta\uparrow\infty,\\ {{\mathcal{X}}}\ni x\to \check{x}}} {W}_{\theta} (x),\quad 
(\lscsup{\mathcal{X}} W)(\check{x})\rav \lsc \limsup_{\substack{\theta\uparrow\infty,\\ {{\mathcal{X}}}\ni x\to \check{x}}} {W}_{\theta} (x)
\end{equation}
if $\check{x}\in \cl {{\mathcal{X}}}$ and $+\infty$ otherwise. Note that both maps are  lower semicontinuous.

First, consider subdifferentials of the lower epigraphical limit of $W_{\theta}$. 

Following \cite[Sect. 7.3]{RW}, for every sequence of $g_n:{\mathbb{X}}\to\mathbb{R}\cup\{-\infty,+\infty\}$, denote by $\displaystyle\eliminf_{n\uparrow\infty} g_n$ 
	and $\displaystyle\elimsup_{n\uparrow\infty} g_n$ 
	the epigraphical lower and upper
	limits of $g_n$: 
	for all $x\in\mathbb{X}$
	\begin{align*}
	\eliminf_{n\uparrow\infty} g_n(x)&\rav \lim_{\varkappa\downarrow
		0}\liminf_{n\uparrow\infty}\inf_{||x-z||\leq \varkappa}g_n(z)\in\mathbb{R}\cup\{-\infty,+\infty\},\\
	\elimsup_{n\uparrow\infty} g_n(x)&\rav\lim_{\varkappa\downarrow 0}\limsup_{n\uparrow\infty}\inf_{||x-z||\leq \varkappa}g_n(z)\in\mathbb{R}\cup\{-\infty,+\infty\}.
	\end{align*}
	When these two functions coincide, we say  that the functions $g_n$ epi-converge  to 
	$$\elim_{n\uparrow\infty} g_n\rav\eliminf_{n\uparrow\infty} g_n=\elimsup_{n\uparrow\infty} g_n.$$
	Note that the epigraphical lower and upper limits is lower semicontinuous \cite[Proposition 7.4(a)]{RW}.
	

The first lemma is the consequence of the corresponding results of \cite{slope}. 
\begin{lemma}\label{mymy19}
	Let  for a given neighborhood  ${G}\subset\mathbb{X}$ maps ${W}_\theta:\mathbb{X}\to\mathbb{R}$  be lower semicontinuous on $G$ for all positive $\theta$. 
	Assume that 
	\begin{align}\label{hitro1}
	\eliminf_{\theta\uparrow\infty} W_\theta(z)= W_{\inf}(z)\qquad \forall z\in {G}.
	\end{align}
	
	Then, for all $\check{z}\in {G}$ satisfying $|W_{\inf}(\check{z})|<+\infty$, one has 
	\begin{equation}\label{14666}
	{\partial} W_{\inf}(\check{z})\subset
	\bigcap_{\substack{(\theta_n)_{n\in\mathbb{N}}\in\mathbb{R}_+^{\mathbb{N}},\ \theta_n\uparrow\infty,\\ 
			\dist( \check{z},W_{\inf}(\check{z});\Gr W_{\theta_n})\to 0}}
	\Limsup_{\substack{n\uparrow\infty,\ z\to\check{z},\\ |W_{\theta_n}(z)-W_{\theta_n}(\check{z})|\to 0}
	}
	\hjk{\partial} W_{\theta_n}(z),
	\end{equation}
	i.e., for every limiting subgradient
	$\xi\in{\partial} W_{\inf}(\check{z})$ and 
	 every unbounded increasing sequence of positive numbers $\theta_n\in\mathbb{R}_+$ satisfying 
	$\dist( \check{z},W_{\inf}(\check{z});\Gr W_{\theta_n})\to 0$
	there exists sequences of
	points $z_n\in \mathbb{X}$ and gradients
	$\zeta_n\in\hjk{\partial} W_{\theta_n}(z_n)$  satisfying
	  $\|z_n-\check{z}\|\to 0$, $\|\zeta_n-\check{z}\|\to 0$, and 
	$|W_{\theta_n}(z_n)-W_{\theta_n}(\check{z})|\to 0$ as $n\uparrow\infty$.
\end{lemma}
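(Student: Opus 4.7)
The plan is to read the statement as a direct consequence of the standard Fr\'echet-subdifferential stability theory for epigraphical lower limits from \cite{slope}. Assumption \eqref{hitro1} says precisely that the pointwise lower limit $W_{\inf}$ coincides with the epi-lower limit of the family $\{W_\theta\}_{\theta\in\mathbb{T}}$ on $G$, so the limit function and the approximants are compatible in exactly the sense needed for an Attouch-type subgradient extraction. The openness of $G$ together with the lower semicontinuity of each $W_t$ means all subdifferential calculus is local and routine.

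Fix $\check{z}\in G$ with $|W_{\inf}(\check{z})|<+\infty$, a subgradient $\xi\in\hjk{\partial}W_{\inf}(\check{z})$, and an unbounded increasing sequence $(\theta_n)\subset\mathbb{T}$ with $W_{\theta_n}(\check{z})\to W_{\inf}(\check{z})$. By the smooth variational description of the Fr\'echet subdifferential, one can produce a $C^1$ function $\phi$ on a neighbourhood of $\check{z}$ such that $\nabla\phi(\check{z})=\xi$, $\phi(\check{z})=W_{\inf}(\check{z})$, and the function $z\mapsto W_{\inf}(z)-\phi(z)-\|z-\check{z}\|^2$ attains a strict local minimum equal to zero at $\check{z}$. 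Introduce the perturbed functions
\[
h_n(z)\rav W_{\theta_n}(z)-\phi(z)-\|z-\check{z}\|^2.
\]
By construction $h_n(\check{z})\to 0$, while \eqref{hitro1} gives $\liminf_{n\uparrow\infty}\inf_{\|z-\check{z}\|\leq\varkappa}h_n(z)\geq 0$ for every sufficiently small $\varkappa>0$.

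These two facts allow the application of Ekeland's variational principle on a shrinking family of closed balls $\overline{B(\check{z},\varkappa_n)}$ with $\varkappa_n\downarrow 0$ and tolerances $\varepsilon_n\downarrow 0$, calibrated so that $\varepsilon_n\ll\varkappa_n$. This produces $z_n$ with $z_n\to\check{z}$, $h_n(z_n)\to 0$, lying in the interior of the ball, and furnishing a local minimizer of $z\mapsto h_n(z)+\varepsilon_n\|z-z_n\|$. Because $\phi$ and $\|\cdot-\check{z}\|^2$ are smooth, the Fr\'echet-subgradient sum rule at $z_n$ produces a vector $\zeta_n\in\hjk{\partial}W_{\theta_n}(z_n)$ of the form $\zeta_n=\nabla\phi(z_n)+2(z_n-\check{z})+O(\varepsilon_n)$, hence $\zeta_n\to\nabla\phi(\check{z})=\xi$. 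The convergence $h_n(z_n)\to 0$ together with the smoothness of $\phi$ forces $W_{\theta_n}(z_n)\to W_{\inf}(\check{z})$; combined with $W_{\theta_n}(\check{z})\to W_{\inf}(\check{z})$ this yields $|W_{\theta_n}(z_n)-W_{\theta_n}(\check{z})|\to 0$, placing $\xi$ in the $\Limsup$ on the right-hand side of \eqref{14666}.

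The principal technical point is the calibration of the parameters $\varkappa_n,\varepsilon_n$ so that the Ekeland point does not reach the boundary of the ball (where the sum rule would pick up an unwanted normal-cone term) and so that the residual $O(\varepsilon_n)$ term from the variational principle actually tends to zero; once the smooth variational representation of $\xi$ is available, this calibration is the standard device of \cite{slope} and everything else is routine bookkeeping. Since the extraction works for an arbitrary choice of $(\theta_n)$ satisfying the stated condition, inclusion \eqref{14666} follows.
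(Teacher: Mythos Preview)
Your approach is correct and genuinely different from the paper's. The paper does not run a variational principle on the given sequence; instead it argues by contradiction, uses epigraphical compactness \cite[Theorem~7.6]{RW} to extract a subsequence along which $W_{\theta_{n_k}}$ epi-converges to some $\widetilde W$, observes that $\xi\in\hjk{\partial}\widetilde W(\check z)$ (since $\widetilde W\geq W_{\inf}$ with equality at $\check z$), and then invokes the stability theorem \cite[Proposition~2.2(v), Theorem~5.3(ii)]{slope} for epi-convergent sequences as a black box. Your direct Ekeland extraction essentially unpacks that black box and is more self-contained; the paper's route is shorter only because it cites. Your key lower estimate $\liminf_n\inf_{\|z-\check z\|\leq\varkappa}h_n\geq 0$ is correct: if it failed one would find, by compactness of the closed ball, points $z_{n_k}\to z^*$ with $\limsup_k h_{n_k}(z_{n_k})<0$, contradicting $\liminf_k W_{\theta_{n_k}}(z_{n_k})\geq W_{\inf}(z^*)\geq\phi(z^*)+\|z^*-\check z\|^2$.

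One minor wrinkle: the Ekeland penalty $\|\cdot-z_n\|$ is not smooth at $z_n$, so your sum-rule step only yields $\nabla\phi(z_n)+2(z_n-\check z)$ as an $\varepsilon$-Fr\'echet subgradient of $W_{\theta_n}$, not an exact one. The cleanest fix is to replace Ekeland by direct minimisation of $W_{\theta_n}-\phi+\|\cdot-\check z\|^2$ over a \emph{fixed} small closed ball (no shrinking needed); the same lower-bound argument forces the minimiser to the interior, and Fermat's rule then delivers an exact $\zeta_n\in\hjk{\partial}W_{\theta_n}(z_n)$ with $\zeta_n\to\xi$ and $W_{\theta_n}(z_n)\to W_{\inf}(\check z)$.
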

\doc 
	Consider a point $\check{z}\in {G}$.
	
	Let $\mathcal{T}(\check{z})$ be the set of all unbounded increasing sequences of positive $\theta_n$ such that one has
	$\dist( \check{z},W_{\inf}(\check{z});\Gr W_{\theta_n})\to 0$ and the sequence of maps
	\begin{equation}
	\label{555535}
	\mathbb{X}\ni z\mapsto\ W_{\theta_n}(z)
	\end{equation}
	epi-converges. Fix such a sequence of $\theta_n$, consider the corresponding epi-limit 
	\begin{equation}\label{1265}
	\mathbb{X}\ni z\mapsto\widetilde{W}_{\{\theta_n\}}(z)\rav\elim_{n\uparrow\infty} W_{\theta_n}(z);
	\end{equation}
	this function is lower semicontinuous
	by \cite[Proposition 7.4(a)]{RW}.
	
	 Fix a subgradient
	$\xi\in\hjk{\partial} W_{\inf}(\check{z})$. 
	By \cite[Theorem~1.27]{mord5}, 
	since $\mathbb{X}$ is finite-dimensional, 
	there exists a $C^1$-smooth function $g:\mathbb{X}\to\mathbb{R}$ and a neighbourhood $G'\subset G$ of the point $\check{z}$
	such that one has $\xi=\frac{\partial g}{\partial z}(\check{z})$, ${W}_{\inf}(\check{z})=g(\check{z})$, and $\lsc{W}_{\inf}(z)-\lsc{W}_{\inf}(\check{z})\geq g(z)-g(\check{z})$  for all $z\in G'$.

	 Now, on the one hand, \eqref{hitro1} entails  	
	$\widetilde{W}_{\{\theta_n\}}({z})\geq W_{\inf}({z})$ for all $z\in G$; on the other hand,
	$\dist( \check{z},W_{\inf}(\check{z});\Gr W_{\theta_n})\to 0$
	leads to $\widetilde{W}_{\{\theta_n\}}(\check{z})\leq W_{\inf}(\check{z})$.
	Then, one has $\widetilde{W}_{\{\theta_n\}}(\check{z})=W_{\inf}(\check{z})=g(\check{z})$ and $\widetilde{W}_{\{\theta_n\}}(z)-\widetilde{W}_{\{\theta_n\}}(\check{z})\geq g(z)-g(\check{z})$ for all
	$z\in G'$. By \cite[Theorem~1.27]{mord5}, 
	$\xi=\frac{\partial g}{\partial z}(\check{z})$ lies in $\hjk{\partial}\widetilde{W}_{\{\theta_n\}}(\check{z})$.

	Recall that the sequence of functions  $W_{\theta_n}$ is 
	asymptotically locally equicoercive \cite[Definition~4.4]{evol}  	
	if for a bounded sequence of $x_k\in\mathbb{X}$, from $$\sup_{k\in\mathbb{N}} W_{\theta_{n(k)}}(x_k)<+\infty$$  for a given unbounded sequence of $n(k)$, it follows that there exists a converging subsequence of the sequence of $x_k.$
	Then,
	the sequence of $W_{\theta_n}$ has this property,  since  $\mathbb{X}$ is finite-dimensional.  
	According to \cite[Proposition 2.2(v) and Theorem~5.3(ii)]{slope}, it follows  that
	\[\xi\in\hjk{\partial} \widetilde{W}_{\{\theta_n\}}(\bar{z})\subset
	\Limsup_{{z\to \bar{z},\ n\uparrow\infty,\ W_{\theta_n}(z)\to \widetilde{W}_{\{\theta_n\}}(\check{z})}}
	\hjk{\partial} {W}_{\theta_n}(z)\qquad \forall \bar{z}\in {G}\]
	holds 
	for all sequences
	$(\theta_n)_{n\in\mathbb{N}}\in\mathcal{T}(\check{z})$ because of the epigraphical convergence  of \eqref{555535} to $\widetilde{W}_{\{\theta_n\}}$.
	Hence,
	the convergence     	 $W_{\theta_n}(\check{z})\to \widetilde{W}_{\{\theta_n\}}(\check{z})=
	{W}_{\inf}(\check{z})$ with
	$\xi\in\hjk{\partial}\widetilde{W}_{\{\theta_n\}}(\check{z})$
	gives
	\begin{equation} \label{573}    
	\xi\in
	\bigcap_{(\theta_n)_{n\in\mathbb{N}}\in\mathcal{T}(\check{z})}
	\Limsup_{\substack{n\uparrow\infty,\ z\to\check{z},\\ |W_{\theta_n}(z)-W_{\inf}(\check{z})|\to 0}
	}\hjk{\partial}{W}_{\theta_n}(z).
	\end{equation}
	Since the right side of this inclusion is upper semicontinuous, we obtain that \eqref{573} holds true for all 
	$\xi\in{\partial} W_{\inf}(\check{z})$.
	
	To prove \eqref{14666},   
	suppose to the contrary  that one can find a positive  $\epsi$ and an        
	unbounded increasing sequence of positive $\theta_n$ satisfying 
	$$\dist( \check{z},W_{\inf}(\check{z});\Gr W_{\theta_n})\to 0$$
	such that
	one has
	$
	|W_{\theta_n}({z}_n)-W_{\inf}(\check{z})|+||\zeta_n-\xi||+||z_n-\check{z}||>\epsi$ 	for all  sequences of
	points $z_n\in {G}$  and gradients $\zeta_n\in\hjk{\partial}{W}_{\theta_n}(z_n)$. 
	Then, the sequence of $\theta_n$ (and every its subsequence) would not satisfy \eqref{573}.
	Hence the sequence of maps \eqref{555535} would not have any epi-converging subsequence.
	However,
	since  $W_{\inf}(\check{z})$ is finite, the  sequence of ${W}_{\theta_n}$ does not escape epigraphically to the horizon, and,
	by \cite[Theorem 7.6]{RW}, possesses an
	epi-converging subsequence. We get a contradiction.  
	
	Thus,  Lemma \ref{mymy19} has been proved. 
\bo 
	

\begin{lemma}\label{mymy20}
	    Let    $C$  be a nonempty subset of $\mathbb{X}$.
		Let  
		maps ${W}_\theta:\mathbb{X}\to\mathbb{R}$  be  $1$-Lipschitz continuous on $\mathbb{X}$.

	Then, for all $\check{z}\in C$ satisfying $|(\lscinf{C} W)(\check{z})|<+\infty$, one has 
	\begin{equation}\label{14667}
	{\partial} (\lscinf{C} W)(\check{z})\subset N(\check{z};\cl C)+
		\bigcap_{\substack{(\theta_n)_{n\in\mathbb{N}}\in\mathbb{R}_+^{\mathbb{N}},\ \theta_n\uparrow\infty,\\ 
		|(\lscinf{C} W)({z})-W_{\theta_n}(z)|\to0}}
	\Limsup_{\substack{n\uparrow\infty,\ z\to\check{z},\\ |W_{\theta_n}(z)-W_{\theta_n}(\check{z})|\to 0}
	}
	\hjk{\partial} W_{\theta_n}(z).
	\end{equation}
\end{lemma}	
\doc 	
      
    First, note that
    1-Lipschitz continuity of all the $W_\theta$ as well as $W_{\inf}(z)$ yields that
 $$\lsc(W_\theta+{\imath}_C)=W_\theta+\lsc{\imath}_C=W_\theta+{\imath}_{\cl C},\quad\lsc(W_{\inf}+{\imath}_C)=W_{\inf}+{\imath}_{\cl C}.$$
     In particular, for all $z\in {\cl C}$ one has
     \begin{align}\label{hitro22}
     (\lscinf{C} W)({z})= 
	\eliminf_{\theta\uparrow\infty} (W_\theta(z)+{\imath}_C)(z)=
	W_{\inf}(z)+{\imath}_{\cl C}(z).
	\end{align}
	Furthermore, $\dist(z,(\lscinf{C} W)({z});\Gr (W_{\theta_n+\imath_{C}}))\to0$ iff
	$|(\lscinf{C} W)({z})-(W_{\theta_n}+\imath_{C}))(z)|\to0$ iff
	$|(\lscinf{C} W)({z})-W_{\theta_n}(z)|\to0$ for all $z\in C$.
	
	 Second, for all positive $\theta$, the fuzzy sum rule \cite[Theorem~3.3.3]{BorZhu} leads to
	 \[\hjk{\partial}(W_\theta+{\imath}_{\cl C})({z})\subset
	 \Limsup_{\cl C\ni\hjk{z}\to{z}}
	 \hjk{N}(\hjk{z};\cl C)+\Limsup_{\hjk{z}\to{z}}
	 \hjk{\partial}W_\theta(\hjk{z})\qquad\forall {z}\in \cl C.\]
	 Furthermore, by \cite[Theorem~1.6]{mord5} we also have
	 \begin{equation} \label{4220}
	 \hjk{\partial}(W_\theta+{\imath}_{\cl C})({z})\subset
	 N({z};\cl C)+\Limsup_{\substack{\hjk{z}\to {z},\\ |W_{\theta}(\hjk{z})-
	 		W_{\theta}({z})|\to 0}}
	 \check{\partial}W_\theta(\hjk{z})\qquad\forall {z}\in \cl {C}. 
	 \end{equation}

       At last, note that \eqref{hitro22} is condition    \eqref{hitro1} with $W_\theta+{\imath}_{\cl C}$ instead of $W_\theta$. Now, for all every $\check{z}\in C$,
inclusion \eqref{14666} with $W_\theta+{\imath}_{\cl C}$ instead of $W_\theta$ is
\begin{align*}{\partial} (\lscinf{C}W)(\check{z})&\subset
\bigcap_{\substack{(\theta_n)_{n\in\mathbb{N}}\in\mathbb{R}_+^{\mathbb{N}},\ \theta_n\uparrow\infty,\\ 
|(\lscinf{C} W)({z})-W_{\theta_n}(z)|\to0}}
\Limsup_{\substack{n\uparrow\infty,C\ni z\to\check{z},\\ |W_{\theta_n}(z)-W_{\theta_n}(\check{z})|\to 0}
}
\hjk{\partial} (W_{\theta}+{\imath}_{\cl C})(z).
\end{align*}    
Applying 
\eqref{4220}, we obtain \eqref{14667}.
	Lemma \ref{mymy20} has been proved. 
\bo 

	In \cite[Theorem 6.1(i)]{trieman},  the following upper-estimate of Fr\'echet subdifferentials of  the upper epigraphical limit of lower semicontinuous functions $W_\theta$ was shown:
\begin{align*}
\hjk{\partial} \Big(\elimsup\nolimits_{\theta\uparrow\infty} W_\theta\Big)(\check{z})&\subset \co\Limsup_{\theta\uparrow\infty, z\to \check{z}} \hjk{\partial}{W}_{\theta}(z)\\&= \bigcap_{\varkappa>0}\co\cl\bigcup_{\theta>1/\varkappa,\ {z\in \mathbb{X},\ ||z-\check{z}||<\varkappa}}
\hjk{\partial}{W}_{\theta}(z) \qquad \forall\check{z}\in\mathbb{X}.	
\end{align*}
Repeating the proof of  the previous Lemma  word-for-word and using this inclusion  instead to  \eqref{14666}, we 
obtain the following result.   
\begin{lemma}\label{mymy50}
	    Let    $C$  be a nonempty subset of $\mathbb{X}$.
		Let  
		maps ${W}_\theta:\mathbb{X}\to\mathbb{R}$  be  $1$-Lipschitz continuous on $\mathbb{X}$.
	
	Then, for all $\check{z}\in C$  satisfying $|(\lscsup{C} W)(\check{z})|<+\infty$, one has 
\begin{align}\label{1466}
{\partial} \big(\lscsup{C} W\big)(\check{z})\subset \co N(\check{z};\cl C)+\co\Limsup_{\theta\uparrow\infty, z\to \check{z}} \hjk{\partial}{W}_{\theta}(z).
\end{align}
\end{lemma}
 
	The following lemma is the keystone of this section.
	
	\begin{lemma}\label{key}
Let a family of nonempty closed  subsets $\Omega_\theta\subset \mathbb{X}$,  $\theta> 0$, 
be given.
Consider also the closed sets 
$\Omega_{\inf}$  and
$\Omega_{\sup}$ defined as follows:
\begin{eqnarray*}
\Omega_{\inf}\rav \Liminf_{\theta\uparrow\infty} \Omega_\theta=\cl\bigcup_{\theta>0}\bigcap_{\theta'>\theta}\Omega_{\theta'},\qquad
\Omega_{\sup}\rav \Limsup_{\theta\uparrow\infty} \Omega_\theta=\bigcap_{\theta>0}\cl \bigcup_{\theta'>\theta}\Omega_{\theta'}.\end{eqnarray*}
 
  Then, for every subset $C\subset \mathbb{X}$ and point $\check{z}\in C$
  \begin{align}
  \label{eq:0co}
    {N}(\check{z};\Omega_{\inf}\cap C)&\subset \co {N}(\check{z};\cl C)+\co\Limsup_{{z\to\check{z},\theta\uparrow\infty}}   \hjk{N}({z};\Omega_{\theta})\ \textrm{if}\ \check{z}\in\Omega_{\inf};\\
  \label{eq:0cap}
  {N}(\check{z};\Omega_{\sup}\cap C)&\subset {N}(\check{z};\cl C)+
 \bigcap_{\substack{(\theta_n)_{n\in\mathbb{N}}\in\mathbb{R}_+^{\mathbb{N}},\; \theta_n\uparrow\infty,\\ 
		\dist(\check{z};\Omega_{\theta_n})\to  0}}
 \Limsup_{{n\uparrow\infty,\;z\to\check{z}}
}  \hjk{N}({z};\Omega_{\theta_n})\ \textrm{if}\ \check{z}\in\Omega_{\sup}.
  \end{align}
	\end{lemma}
		\doc 
Define $W_\theta\rav\dist(\cdot;\Omega_{\theta})$ for all positive  $\theta$.
For all $z\in \mathbb{X}$ these functions satisfy the following equalities:
\begin{align*}
    \dist(z;\Omega_{\inf}\cap C)=\elimsup_{\theta\uparrow\infty}\dist(z;\Omega_{\theta}\cap C)=(\lscsup{C} W)(z),
    \\
\dist(z;\Omega_{\sup}\cap C)=\eliminf_{\theta\uparrow\infty}\dist(z;\Omega_{\theta}\cap C)=(\lscinf{C} W)(z).
\end{align*}
Now, from \eqref{14667} and \eqref{1466} it follows that 
\begin{align}
{\partial} \dist(\check{z};\Omega_{\inf}\cap C)&\subset N(\check{z};\cl C)
+
\bigcap_{\substack{(\theta_n)_{n\in\mathbb{N}}\in\mathbb{R}_+^{\mathbb{N}},\ \theta_n\uparrow\infty,\\ 
		\dist(\check{z};\Omega_{\theta_n})\to  0}}
 \Limsup_{\substack{n\uparrow\infty,\\ z\to\check{z}}
} \hjk{\partial}{W}_{\theta_n}(z)\quad \forall\check{z}\in\Omega_{\inf};\\
{\partial} \dist(\check{z};\Omega_{\sup}\cap C) &\subset \co N(x;\cl C)+\co\Limsup_{\theta\uparrow\infty, z\to \check{z}} \hjk{\partial}{W}_{\theta}(z) \qquad \forall\check{z}\in\Omega_{\sup}.
\end{align}
Now, \eqref{eq:0cap} and \eqref{eq:0co} follow from equalities \eqref{133}.

  The key lemma has been proved.
\bo 

The next two lemmata do not follow from similar results in \cite{trieman,mordukhovich2013subdifferentials,perez2019subdifferential}.
	\begin{lemma}\label{cap}
		Let  maps ${W}_\theta:\mathbb{X}\to\mathbb{R}\cup\{-\infty,+\infty\}$ be lower semicontinuous for all positive $\theta$.
 Given  a subset ${{\mathcal{X}}}\subset\mathbb{X}$, a  point $\check{z}\rav(\check{x},\check{y})\in \epi (\lscinf{\mathcal{X}} W)$, and 
		a normal $(\zeta,-\lambda)$ in ${N}(\check{z};\epi (\lscinf{\mathcal{X}} W))$.  
     Assume also that $\check{y}=W_\theta(\check{x})$	for all positive $\theta$.

\begin{subequations}
		Then:
	\begin{list}{}{}
\item{(i)}
\begin{align}     \label{a:0cap}
(\zeta,-\lambda)\in {N}(\check{x};\cl \mathcal{X})\times\{0\}+
\bigcap_{{(\theta_n)_{n\in\mathbb{N}}\in\mathbb{R}_+^{\mathbb{N}}, \theta_n\uparrow\infty}}
\Limsup_{n\uparrow\infty,\ z\to\check{z}}
\hjk{N}({z};\epi W_{\theta_n}).
\end{align}
\item{(ii)}
Furthermore, 
\begin{align}     \label{c:0cap}
	\zeta\in {N}(\check{x};\cl\mathcal{X})+
\bigcap_{{(\theta_n)_{n\in\mathbb{N}}\in\mathbb{R}_+^{\mathbb{N}}, \theta_n\uparrow\infty
}}\Limsup_{\substack{n\uparrow\infty,\;x\to\check{x},\;\lambda'\downarrow\lambda,\\ W_{\theta}(x)\to\check{y}}}
	\lambda'\hjk{\partial} W_{\theta_n}(x)
\end{align}
if    all the $W_\theta$ are Lipschitz continuous on a given neighbourhood $G$ of $\check{x}$.
\end{list}
\end{subequations}     	
\end{lemma}	
\doc 
	 At the beginning, consider the case when $W_{\inf}$ is continuous.

 Put $C\rav\mathcal{X}\times\mathbb{R}$ and $\Omega_\theta\rav\epi W_\theta$ for every positive $\theta$. 
	Then, the continuity of $W_{\inf}$ gives
 $\Omega_{\sup}\cap \cl C=\epi (\lscinf{\mathcal{X}} W)$. 
 Now, by Lemma~\ref{key}, for 
  every point $\check{z}\rav(\check{x},\check{y})\in \epi (\lscinf{\mathcal{X}} W)$, inclusion
  \eqref{eq:0cap} holds. Hence,
   every normal $(\zeta,-\lambda)$ in ${N}(\check{z};\epi (\lscinf{\mathcal{X}} W))$ satisfies \eqref{a:0cap}. So, 
   \eqref{a:0cap} has been proved.

To prove \eqref{c:0cap}, note that  the cone 
$\hjk{N}(x,g(x);\epi g)$ of  a Lipschitz continuous function $g$ is $\cup_{r\geq  0}r(\hjk{\partial}g(x) \times\{-1\})$.
 Also 
 $\hjk{N}(z;\epi g)$ is contained in $\{0\}$ if the point $z$ does not lie in the graph of $g$.
  Hence,
 we may rewrite the inclusion \eqref{a:0cap} as
 \begin{align*}
 (\zeta,-\lambda)\in {N}(\check{x};\cl\mathcal{X})\times\{0\}+
\bigcap_{\substack{(\theta_n)_{n\in\mathbb{N}}\in\mathbb{R}_+^{\mathbb{N}},\\ \theta_n\uparrow\infty
}}\Limsup_{\substack{n\uparrow\infty,\;x\to\check{x},\;\lambda'>0,\\ W_{\theta}(x)\to\check{y}}}
\lambda'(\hjk{\partial} W_{\theta_n}(x)\times\{-1\}).
 \end{align*}
 Hence a sequence of   $\lambda'_n$ has to converge to $\lambda$ and \eqref{c:0cap} has been proved.

In the general case, consider maps $\widetilde{W}_\theta=\dist(\cdot;\epi W_\theta)$. These maps as well $\widetilde{W}_{\inf}$ are
1-Lipschitz continuos. Therefore, inclusion \eqref{c:0cap} holds for $\lscinf{C} \widetilde{W}$ with $\lambda'=1$. The substitution of \eqref{133} gives \eqref{a:0cap} for $\lscinf{\mathcal{X}} W$. Repeating the previous paragraph literally, we obtain
\eqref{c:0cap}.
\bo 
	\begin{lemma}\label{co}
	Let  maps ${W}_\theta:\mathbb{X}\to\mathbb{R}\cup\{-\infty,+\infty\}$ be lower semicontinuous for all positive $\theta$.
	Given  a subset ${{\mathcal{X}}}\subset\mathbb{X}$, a  point $\check{z}\rav(\check{x},\check{y})\in \epi (\lscsup{\mathcal{X}} W)$, and 
	a normal $(\zeta,-\lambda)$ in ${N}(\check{z};\epi (\lscsup{\mathcal{X}} W))$.  

\begin{subequations}
	Then:
	\begin{list}{}{}
		\item{(i)}
		\begin{align}     \label{a:0co}
			(\zeta,-\lambda)\in \co{N}(\check{x};\cl\mathcal{X})\times\{0\}+\co
			\Limsup_{{z\to\check{z},\  \theta\uparrow\infty}}
			\hjk{N}({z};\epi W_{\theta}).
		\end{align}
		\item{(ii)}
		Furthermore, 
		\begin{align}     \label{c:0co}
			\zeta\in \co{N}(\check{x};\cl\mathcal{X})+\co
		\Limsup_{\substack{\theta\uparrow\infty,\ x\to\check{x},\ \lambda'\to\lambda,\\W_{\theta}(x)\to\check{y}}}
			\lambda'\hjk{\partial} W_{\theta}(x)
		\end{align}
		if    all the $W_\theta$ are Lipschitz continuous on a given neighbourhood $G$ of $\check{x}$.
		\item{(iii)}
		\begin{align}     \label{d:0co}
	\zeta\in {N}(\check{x};\cl\mathcal{X})+\lambda\co
	\Limsup_{\substack{\theta\uparrow\infty,\ x\to\check{x},\\W_{\theta}(x)\to\check{y}}}
	\hjk{\partial} W_{\theta}(x)
\end{align}
if  there exists a neighbourhood $G$ of $\check{x}$ and a positive $R$ such that  all the $W_\theta$ are $R$-Lipschitz continuous on ${G}$.
	\end{list}	
\end{subequations}	
\end{lemma}	
\doc 

      The proof of  \eqref{a:0co} and \eqref{c:0co} 
       follows from \eqref{eq:0co} as well  \eqref{a:0cap} and \eqref{c:0cap} from \eqref{eq:0cap}.

     To prove  \eqref{d:0co}, 
      note that in the case of $R$-Lipschitz continuity of all the $W_\theta|_{G}$, 
      the functions
      $\lsc W_{\theta}|_{G}= W_{\theta}|_{G}$ and     
      $W_{\sup}|_{G}=\big(
      \lscsup{\mathcal{X}} W\big)|_{G}$ have the same property. It follows that
      $\lscsup{\mathcal{X}} W$ coincides with ${\imath}_{\cl\mathcal{X}}+W_{\sup}$ on $G$.
      In addition, since every singular limiting subgradient of 
Lipschitz continuous function $W_{\sup}|_{G}$ is zero,   the subdifferential qualification condition \cite[(2.34)]{mord5} for $W_{\sup}$ and ${\imath}_{\cl\mathcal{X}}$ is applied. Now, by \cite[(2.35) and (2.36)]{mord5} we obtain that 
\[\partial^{\infty}(W_{\sup}+{\imath}_{\cl\mathcal{C}})|_{G}=\partial^{\infty}{\imath}_{\cl\mathcal{C}}|_{G}=
N(\cdot;\cl\mathcal{C})|_{G},\ \partial(W_{\sup}+{\imath}_{\cl\mathcal{C}})|_{G}=\partial W_{\sup}|_{G}.\]
 It means that 
  $$N(\check{x};\epi (\lscsup{\mathcal{X}} W))\!=\!N(\check{x};\epi ({\imath}_{\cl\mathcal{X}}+W_{\sup}))\!=\!
  N(\check{x};\cl{\mathcal{X}})\times\{0\}+N(\check{x},\check{y};\epi W_{\sup}).$$
     Applying \eqref{a:0co} for the function $W_{\sup}|_{G}=(\lscsup{G} W)|_{G}$ and
     using the boundedness of the norms of all Fr\'{e}chet subdifferentials of $W_\theta|_{G}$,
  we obtain 
	\begin{align*}     
		(\zeta,-\lambda)\in N(\check{x};\cl{\mathcal{X}})\times\{0\}+\lambda\co
		\Limsup_{\substack{x\to\check{x},\  \theta\uparrow\infty\\ W_{\theta}({x})\to\check{y}}}
		\hjk{\partial}W_{\theta}({x})\times\{-1\}
	\end{align*}
	for all normal $(\zeta,-\lambda)$ in ${N}(\check{z};\epi
	(\lscsup{\mathcal{X}} W))$.  
	So, \eqref{d:0co} has verified.
	
	The final lemma has been proved.
\bo

	\section{Proofs of Theorems~\ref{9} and \ref{8}}
	\label{ff}
	
	We will prove both theorems simultaneously. The idea of the proof is briefly described as follows.
	At the beginning, following  Halkin's method \cite{Halkin} and ideas of \cite[Section 25.2]{ClarkeNew}, we will reduce the verification of all desired relations on $\mathbb{R}$ with set $U$  to  its verification on finite intervals $[0;n]$ with each finite subset $\Upsilon_n(t)$ of $U$.
	Then, for fixed $n$,  extending $f$ and $f_0$ from $[0;n]$ to $[0;2n]$ by back-track, we will consider a new dynamics function $\bar{f}_{n}$ and integrand $\bar{f}_{0,n}$; this trick ensures the transfer of the right endpoint to $\hjk{y}(0)$.  Next, we will rewrite the corresponding   criteria (the weakly overtaking criterion in Theorem~\ref{9} and the
	overtaking criterion in Theorem~\ref{8}) in terms of a constrained optimisation problem in a finite-dimensional space. Further, we will reduce this problem to some finite-horizon control problem. Proceeding to Halkin's method, we will consider the corresponding Pontryagin Maximum Principle \cite[Theorem 22.26]{ClarkeNew} for this control problem and estimate the happened transversality condition by Lemma~\ref{co} and Lemma~\ref{cap} in the proof of Theorem~\ref{9} and the proof of Theorem~\ref{8}, respectively.
	
	
	\newcorr{ {\bf Step 0.}}

	\corr{
	Consider maps $\mathbb{X}\ni x\mapsto W_\theta(x)\rav \hjk{J}(x;\theta)-\hjk{J}(\hjk{y}(0);\theta)$ for each $\theta>0$.
	Define their partial limits $W_{\sup}$, $W_{\inf}$ from $\mathbb{X}$ to $\mathbb{R}\cup\{-\infty,+\infty\}$ by rule \eqref{infsup}.}
 \newcorr{Set} 
	\begin{align*}
\corr{W_{\textrm{extr}}\rav W_{\sup}\textrm{\ and\ }}\newcorr{(E_{\textrm{extr}})\equiv(E_{\textrm{sup}}) }&\newcorr{\textrm{\ in the proof of Theorem~\ref{9}},}\\
	\corr{W_{\textrm{extr}}\rav W_{\inf}\textrm{\ and\ }}\newcorr{(E_{\textrm{extr}})\equiv(E_{\textrm{inf}}) }&\newcorr{\textrm{\ in the proof of Theorem~\ref{8}.}}
	\end{align*}
   \newcorr{Thus, hypothesis $(E_{\textrm{extr}})$ holds true in both proofs.}

\newcorr{  Notice that $(E'_{\textrm{extr}})$ entails $(E'''_{\textrm{extr}})$ by $\hjk{y}(0)\in \mathcal{C}_{\as}\cap \mathcal{C}_0$ and  $i_{\mathcal{C}_{\as}},i_{\mathcal{C}_{0}}\geq 0$. Furthermore,
     $(E''_{\textrm{extr}})$ with conditions of Theorems~1 and 2, respectively, entails $(E'''_{\textrm{extr}})$. Indeed,
   the local weakly overtaking optimality criterion is assumed in Theorem~1 as well as the local overtaking optimality criterion is assumed in Theorem~2.
   Each criterion leads to  the lower semicontinuity of the corresponding $W_{\textrm{extr}}+i_{\mathcal{C}_{\as}}+i_{\mathcal{C}_0}$ at $\hjk{y}(0)$.
   Since $i_{\mathcal{C}_{0}}$ is zero  around each $x\in\internary \mathcal{C}_0$, $W_{\textrm{extr}}+i_{\mathcal{C}_{\as}}$ is lower semicontinuous at $\hjk{y}(0)$ as well.
   Therefore, 
 assumption $(E''_{\textrm{extr}})$ yields $(E'''_{\textrm{extr}})$. So, we may assume that condition $(E'''_{\textrm{extr}})$ holds true in both proofs.}
 
	{\bf Step 1.}

Let $\Upsilon$ be a finite subset of $\mathcal{U}$.
For all natural $n$ and  nonnegative $t$ 
define the sets  
\begin{align*}
U_n(t)\rav \big\{\upsilon\in U\,\big|\,&
\|f(t,\hjk{y}(t),\upsilon)-f(t,\hjk{y}(t),\hjk{u}(t))\|\ \ \\
&\ +|f_0(t,\hjk{y}(t),\upsilon)|+L_{\upsilon}(t)+L_{\hjk{u}(t)}(t)\leq n
\big\},\\    
\Upsilon_n(t)\rav \big\{u(t)\,\big|\,&u\in\Upsilon,u(t)\in U_n(t)\big\}\cup\{\hjk{u}(t)\}    
\end{align*}
  for all nonnegative $t$.
 In particular,  $\hjk{u}(t)$ lies in $\Upsilon_n(t)$ whenever $t$ and $n$. 
 
Denote by $\mathfrak{Q}_\diamond$ the set of all pair $({\lambda},{\psi})\in [0;1]\times C(\mathbb{R}_+,\mathbb{X}^*)$ satisfying with $(\hjk{y},\hjk{u})$  the equality $\|\psi(0)\|+\lambda=1$, 
	adjoint equation \eqref{sys_psi},  condition  \eqref{400},  and the corresponding condition among  \eqref{WAKKa}--\eqref{WAKKd} (in the proof of Theorem~\ref{9}) or among  \eqref{AKKa}-\eqref{AKKc} (in the proof of Theorem~\ref{8}). One verifies easily that this set is closed. Then, $\mathfrak{Q}_\diamond$ is  compact.

Consider the the following assertion:
\begin{list}{}{}
	\item{$(A)$} 
	for each  finite subset
	$\Upsilon$ of $\mathcal{U}$ and each natural $n$
	there exist a nonzero pair $({\lambda}_{n},{\psi}_{n})\in \{0,1\} \times C([0;n],\mathbb{X}^*)$ 
	satisfying  
		\begin{itemize}
		\item 	the inequality
\begin{equation}\label{937}
	{H}\big(\hjk{y}(t),{\psi}_{n}(t),\upsilon,\lambda_{n},t\big)
	\!\leq\!
	{H}\big(\hjk{y}(t),{\psi}_{n}(t),\hjk{u}(t),\lambda_{n},t\big)
	\end{equation}
	for all $\upsilon\in \Upsilon_{n}(t)$ and almost all $t\in[0;n]$, 
		\item 
as well as 
	 adjoint equation \eqref{sys_psi} with $(\hjk{y},\hjk{u})$, 
	 		\item 
transversality condition  \eqref{400}, 
		\item 
and  the corresponding condition among  \eqref{WAKKa}--\eqref{WAKKd} (in the proof of Theorem~\ref{9}) or among  \eqref{AKKa}-\eqref{AKKc} (in the proof of Theorem~\ref{8}).
	\end{itemize}
\end{list}

In the next two steps
we will show that both  theorems are implied from  the assertion $(A)$.

	{\bf Step 2.}

Assume that assertion $(A)$ is established for all  finite subsets
$\Upsilon$ of $\mathcal{U}$ and all natural $n$.

Fix a such	$\Upsilon$.
Denote by $\mathfrak{Q}_\diamond^{\Upsilon}$ the set of all 
pair $({\lambda},{\psi})\in \mathfrak{Q}_\diamond$ satisfying  inequality \eqref{937} 
 for almost all nonnegative $t$ for each $\upsilon\in\Upsilon$.

 We claim that from $(A)$ it follows that $\mathfrak{Q}_\diamond^{\Upsilon}$ is nonempty and closed.

To prove this, applying the assertion $(A)$ for all natural $n$,  consider the corresponding sequence of $({\lambda}_{n},{\psi}_{n})$.

Scaling each $\psi_{n}$, if necessary, one can also suppose that ${\psi}_{n}(0)=n$ if $\lambda_{n}=0$. 
Then, passing from the sequence of~$n$ to its  subsequence, if necessary, one can assume that\ 
either  all $\lambda_{n_k}$ equal to $1$ and the sequence of $\psi_{n_k}(0)$ converges to a vector $\phi_0\in\mathbb{X}^*$,
or the sequence of $||\psi_{n_k}(0)||$ is unboundedly increasing and the sequence of $\psi_{n_k}(0)/||\psi_{n_k}(0)||$ 
converges to  a nonzero vector  $\phi_0\in\mathbb{X}^*$.
Put $\hjk{\lambda}\rav 1$ and  $r_k\rav 1$ if
 $\lambda_{n_k}\equiv 1$ and put $\hjk{\lambda}\rav 0$ and $r_k\rav 1/||\psi_{n_k}(0)||$ otherwise.
In both cases, the sequence of $(r_k\psi_{n_k}(0),r_k\lambda_{n_k})$ converges to $(\phi_0,\hjk{\lambda})$.

Now, each pair $(r_k\psi_{n_k}(0),r_k\lambda_{n_k})$ satisfies  \eqref{sys_psi}  and inherits all transversality conditions that are verified for $({\psi}_{n_k},{\lambda}_{n_k})$. Further,
by the theorem on continuous dependence of solutions to differential equations on the initial state, the sequence of~$r_k\psi_{n_k}$ converges on  $\mathbb{R}_+$ to the solution~$\hjk{\psi}$ to \eqref{sys_psi} with $\hjk{\psi}(0)=\phi_0$ and $\lambda=\hjk{\lambda}$. Note that
this convergence is uniform on an arbitrary compact interval.
Hence the pair $(\hjk{\psi},\hjk{\lambda})$ also  satisfies \eqref{sys_psi} on the whole $\mathbb{R}_+$. Moreover,   it follows that this pair satisfies inequality
 \eqref{937}  for almost all nonnegative $t$ for each $u\in\Upsilon$.

So,  there exists  a nonzero pair $(\hjk{\lambda},\hjk{\psi})\in \{0,1\} \times C(\mathbb{R}_+,\mathbb{X}^*)$ 
satisfying adjoint equation,  desired transversality conditions, and  inequality
\eqref{937} for almost all nonnegative $t$ for all $\upsilon\in\Upsilon$. Then, the pair $(\lambda_\diamond,\psi_\diamond)\rav\frac{1}{\hjk{\lambda}+\|\hjk{\psi}(0)\|}(\hjk{\lambda},\hjk{\psi})$ lies in $\mathfrak{Q}_\diamond^{\Upsilon}$.

For any converging sequence of $(\lambda^n_\diamond,\psi^n_\diamond)\in \mathfrak{Q}_\diamond^{\Upsilon}$
define $r_n=1$ if $\lambda^n_\diamond=0$ and $r_n=1/\lambda^n_\diamond$ otherwise.
Repeating the previous three paragraphs literally with the sequence of $({\lambda}_{n},{\psi}_{n})\rav r_n(\lambda^n_\diamond,\psi^n_\diamond)$, we obtain that the limit of $(\lambda^n_\diamond,\psi^n_\diamond)$ also  lies in $\mathfrak{Q}_\diamond^{\Upsilon}$.

Thus, $\mathfrak{Q}_\diamond^{\Upsilon}$ is a nonempty and closed subset of $\mathfrak{Q}_\diamond$   for each finite subset $\Upsilon$ of $\mathcal{U}$.

	{\bf Step 3.}

Now we 
show that the both  theorems are implied from $(A)$.

Indeed, for each finite subset $\Upsilon$ of $\mathcal{U}$, the nonempty set
 $\mathfrak{Q}_\diamond^{\Upsilon}$ is  closed in the compact $\mathfrak{Q}_\diamond$.
 By the finite intersection property, there exists a pair $(\lambda_\diamond,\psi_\diamond)$ lies in  $\mathfrak{Q}_\diamond^{\Upsilon}$ for all finite subsets $\Upsilon$.
 	
 We will prove that $(\lambda_\diamond,\psi_\diamond)$ satisfies \eqref{maxH}. 
 Suppose to the contrary  that one can find a subset  of positive measure in which \eqref{maxH} 
 fails.
 Then, for all $n$  large enough, consider the sets
  \[V_n(t)\rav \big\{\upsilon\in U\,\big|\,{H}\big(\hjk{y}(t),{\psi}_\diamond(t),\upsilon,\lambda_\diamond,t\big)
 -
 {H}\big(\hjk{y}(t),{\psi}_\diamond(t),\hjk{u}(t),\lambda_\diamond,t\big)>1/n\big\}\]
 are nonempty on a subset of positive measure.
  Increasing $n$ if necessary, we can guarantee that
  the set
  \[\check{U}_n(t)\rav U_n(t)\cap V_n(t)\]
  is nonempty on a subset $\mathcal{T}$ of positive measure. Since the graph of $\check{U}_n$ is measurable, Aumann's selection theorem \cite{aumann} yields the existence of a measurable function $\check{u}$ having values in $\check{U}_n(t)$ for almost all $t\in\mathcal{T}$. Define $\check{u}(t)=\hjk{u}(t)$ for all $t\notin \mathcal{T}$.
   Then, it follows that \eqref{937} with $\upsilon=\check{u}(t)$ fails
   for $t$ from the set of positive measure, therefore 
   $(\lambda_\diamond,\psi_\diamond)$ does not lies in  $\mathfrak{Q}_\diamond^{\Upsilon}$ if $\Upsilon=\{\check{u}\}$.
 We get the contradiction.
   	
   So,	 $(\lambda_\diamond,\psi_\diamond)$ satisfies \eqref{maxH}. Dividing $(\lambda_\diamond,\psi_\diamond)$ by 
   $\lambda_\diamond$ if $\lambda_\diamond\notin\{0,1\}$, we obtain the desired pair $(\hjk{\lambda},\hjk{\psi})$ with $\hjk{\lambda}\in\{0,1\}$.
   
Thus, it is required  to prove only assertion $(A)$ for each natural $n$ and each finite subset
$\Upsilon$ of $\mathcal{U}$.

	{\bf Step 4}.
	
Fix a finite subset $\Upsilon\subset\mathcal{U}$  and natural $n$ with the corresponding set-valued map $\Upsilon_n:\mathbb{R}\rightrightarrows U$.
For all positive $\alpha\leq n$ denote by $\mathcal{U}_\alpha$ the set of all Lebesgue measurable functions $u:\mathbb{R}_+\to U$ satisfying ${u}|_{[n;\infty)}=\hjk{u}|_{[n;\infty)}$,
 \[\meas \{\tau\in[0;n]\mid u(\tau)\neq\hjk{u}(\tau)\}\leq \alpha,\ \textrm{and}\ u(t)\in \Upsilon_n(t)\ \textrm{for almost all}\ t\in[0;n].\]
 Denote by $\mathbb{X}_\alpha$ the closed $\alpha$-ball centered in $\hjk{y}(0)$.
 At last, take $\beta(n)$ from \eqref{local} in the definition of the corresponding (locally weakly overtaking and locally overtaking) criterion. 

  Introduce $L_{\Upsilon}(t)\rav\sup_{\upsilon\in\Upsilon_n(t)} L_{\upsilon}(t)$ for all $t\in[0;n]$.	Define $M\rav e^{\int_0^n L_{\Upsilon}(\tau)\,d\tau}$. It is finite by $(H2)$ and $(H3)$.

 Consider the neighborhood of  $\Gr\hjk{y}$, the set $\mathbb{G}_\Upsilon=\cap_{\upsilon\in\Upsilon} \mathbb{G}_\upsilon\cap\hjk{\mathbb{G}}$ (see $(H2)$ and $(H3)$). Decreasing $\hjk{\mathbb{G}}$ if necessary, we may get that  $\mathbb{G}_\Upsilon\cap([0;n]\times\mathbb{X})$ is bounded.
 If $(H5)$ holds, decreasing again, we may get that the set $\{(t,x)\in\mathbb{G}_\Upsilon\mid t=0\}$ is contained in $\{0\}\times\mathbb{G}_{\exists}$. Similarly, if $(H6)$ is also fulfilled, this set  may be contained in $\{0\}\times\mathbb{G}_{\partial}$.
 
 Note that, by $(H2)$ and $(H3)$,   for all $u\in\mathcal{U}_n$ the maps $(t,x)\mapsto f(t,x,u(t))$ and $(t,x)\mapsto f_0(t,x,u(t))$ are Lipschitz continuous  on $\mathbb{G}_\Upsilon$ of rank  $L_{\Upsilon}(t)$. Applying the inequality  $\|f(t,x,u)\|\leq \|f(t,\hjk{y}(t),\hjk{u}(t))\|+\|f(t,\hjk{y}(t),u)-f(t,\hjk{y}(t),\hjk{u}(t))\|+L_{\Upsilon}(t)\|x-\hjk{y}(t)\|$, we obtain that the norms of these maps  on $\mathbb{G}_\Upsilon$ are also bounded by the locally summable function $n+\|f(t,\hjk{y}(t),\hjk{u}(t))\|+|f_0(t,\hjk{y}(t),\hjk{u}(t))|+RL_{\Upsilon}(t)$, here $R$ is the diameter of $\mathbb{G}_\Upsilon\cap([0;n]\times\mathbb{X})$. Now, for every $u\in\mathcal{U}_n$   the maps $\rty(x,t,\upsilon;\cdot)$ and $J(x,t,\upsilon;\cdot)$ are well-defined and continuous on an interval if $(t,x)\in{\mathbb{G}}_\upsilon$; furthermore, for every positive $\theta$ the map $x\mapsto J(x,0,\hjk{u};\theta)=\hjk{J}(x;\theta)$
     is continuous around $\hjk{y}(0)$ and, by $(H4)$, is strictly differentiable at this point.
     
     Further,
 decreasing the neighborhood $\mathbb{G}_\Upsilon$ if necessary, on can suppose that the set
 all the graphs of
$\rty(x,t,\hjk{u};\cdot)|_{[0;n]}$, $(t,x)\in \mathbb{G}_\Upsilon$, are containing in $\mathbb{G}_\Upsilon$, i.e., this set is strongly invariant with respect to $(t,x)\mapsto f(t,x,\hjk{u}(t)).$
In addition, there exists a positive $\gamma\leq \min({\beta}(n)/2,1)/2M^2n$ such that, the closed
$2Mn\gamma$-ball centered  in $(t,\hjk{y}(t))$ belongs to $\mathbb{G}_\Upsilon$  for all $t\in[0;n]$.
Now,  all graphs of
$\rty(x,0,\hjk{u};\cdot)|_{[0;n]}$ (with  $x\in \mathbb{X}_\gamma$) are contained in $\mathbb{G}_\Upsilon$.

  At last,  since ${f}$ and ${f}_0$ on ${G}_\Upsilon$ are Lipschitz continuous in $x$ of rank $L_{\Upsilon}(t)$, by definitions of  $\mathbb{G}_{\Upsilon}$, and $M$,  the inequality
	\begin{align}
	    \label{1907_}
	    \||x_1-x_2||\leq M \|\rty(x_1,t,\hjk{u};0)-\rty(x_2,t,\hjk{u};0)\|
	\end{align}
	holds for all $t\in[0;n]$, $(t,x_1),(t,x_2)\in\mathbb{G}_\Upsilon$.
	
   {\bf Step 5}.
	
	Define   the dynamics $\bar{f}_n:[0;2n]\times\mathbb{X}\times U\to\mathbb{X}$ and the integrand 
	$\bar{f}_{0,n}:[0;2n]\times\mathbb{X}\times U\to\mathbb{R}$
	as follows:
	for all $(t,x,u)\in[0;2n]\times{\mathbb{X}}\times U$,
	\begin{align*}
	(\bar{f}_{n},\bar{f}_{0,n})(t,x,u)=\left\{\begin{array}{cc}
	(f,f_0)(t,x,u)\ &\textrm{if}\ t<n;\\
	-(f,f_0)(2n-t,x,\hjk{u}(2n-t))\ &\textrm{if}\ t\geq n.
	\end{array}\right.
	\end{align*}

   Put 
   \[\widetilde{G}\rav\bigcup_{(t,x,s)\in\mathbb{G}_\Upsilon\times\mathbb{R}}\big\{ (t,x,s),(2n-t,x,s)\big\}.
	\]
	Note that since one has $\widetilde{G}\cap([0;n]\times\mathbb{X} \times\mathbb{R})\subset\mathbb{G}_\Upsilon$ and $(\bar{f}_{n},\bar{f}_{0,n})(2n-t,x,u(t))=-(\bar{f}_{n},\bar{f}_{0,n})(t,x,u(t))$, the function $(\bar{f}_{n},\bar{f}_{0,n})$ on $\widetilde{G}$ inherits the properties $f$ and $f_0$ on $\mathbb{G}_\Upsilon$. In particular,   the vector fields $(\bar{f}_{n},\bar{f}_{0,n})(t,\cdot,u)$, $u\in\Upsilon_n(t)$,
	 are locally Lipschitz continuous in $x$ on $\widetilde{G}$ and its norms are bounded by a summable function of time.
    
    Fix a control $u\in\mathcal{U}_\gamma$ and  $x\in\mathbb{X}$ with $(0,x)\in\mathbb{G}_\Upsilon$.
    Denote by $(\bar{\rty}_n,\bar{\rtw}_n)(x,u;\cdot)$ a  solution  to
    \begin{equation}\label{1789}
	\frac{d\bar{y}(\tau)}{d\tau}=\bar{f}_n(\tau,\bar{y}(\tau),u(\tau)),\ \frac{d\bar{w}(\tau)}{d\tau}=\bar{f}_{0,n}(\tau,\bar{y}(\tau),u(\tau))    
	\end{equation}
	with the initial condition $(\bar{y},\bar{w})(0)=(x,0)$
	on the  maximum existing interval  $I_{u,x}\subset[0;2n]$.
	Notice that this solution is unique if 
	\begin{equation}\label{1907}
	    \Gr (\bar{\rty}_n,\bar{\rtw}_n)(x,u;\cdot)|_{I_{u,x}}\subset \widetilde{G}.
	\end{equation}
	Now,  we will prove that $I_{u,x}=[0;2n]$, \eqref{local}, and \eqref{1907} hold  if either $u\equiv\hjk{u}$ or $x\in\mathbb{X}_\gamma$.

{\bf Step 6}.

	 In the case $u=\hjk{u}$, $x\in\mathbb{G}_\Upsilon$,
	by the construction of $\mathbb{G}_\Upsilon$ and the definition $\bar{f}$, the solution
	$(\bar{\rty}_n,\bar{\rtw}_n)(x,\hjk{u};\cdot)$ exists and this solution is unique on $[0;n]$.
	Since $(\bar{f}_{n},\bar{f}_{0,n})$ and $\widetilde{G}$ are symmetric in $t$, each solution
	$(\bar{y}_n,\bar{w}_n)|_{[0;n]}$
	to \eqref{1789}
	 has the unique continuation on $[0;2n]$ by the rule: 
	\[(\bar{y}_n,\bar{w}_n)(2n-t)=(\bar{y}_n,\bar{w}_n)(t)\quad \forall t\in[0;n].\]
In particular, the motion  \[\hjk{\bar{y}}_n(t)\rav\bar{\rty}\big(\hjk{y}(0),\hjk{u};t\big),\ 
	\hjk{\bar{w}}_n(t)\rav\bar{\rtw}\big(\hjk{y}(0),\hjk{u};t\big)\qquad \forall t\in[0;2n]\]
	satisfies $\hjk{\bar{y}}_n(2n)=\hjk{\bar{y}}_n(0)=\hjk{y}(0)$ and $\hjk{\bar{w}}_n(2n)=\hjk{\bar{w}}_n(0)=0.$

	Consider a $u\in\mathcal{U}_\gamma$,  $x\in\mathbb{X}_\gamma$ with its solution $(\bar{\rty}_n,\bar{\rtw}_n)(x,u;\cdot)$ on the maximal interval $I_{u,x}$. This solution exists and is unique at least until the exit  $\bar{\rty}_n(x,u;\cdot)$ from the bounded set $\widetilde{G}_\Upsilon\cap([0;n]\times\mathbb{R}^n)$.
 Note also that, since 
 $\mathbb{G}_\Upsilon$ is strongly invariant with respect to the generated by $\hjk{u}$ dynamics, for every $t\in[0;n]$ we get $(t,(\bar{\rty}_n,\bar{\rtw}_n)(x,u;t))\in\widetilde{G}$ in the case of $(0,\rty(\bar{\rty}_n(x,u;t),t,\hjk{u};0),0)\in\mathbb{G}_\Upsilon$. The following instance
$$\tau_{u,x}\rav\min\{t\in[0;n]\mid \|\rty(\bar{\rty}_n(x,u;t),t,\hjk{u};0)-\hjk{y}(0)\|=2nM\gamma\}\cup\{n\}$$
lies in $I_{u,x}$ because the closed
$2nM\gamma$-ball centered  in $\hjk{y}(t)$ belongs to $\mathbb{G}_\Upsilon$.
We claim that   $\tau_{u,x}=n$, $[0;n]\subset I_{u,x}$ hold true, and $y(\cdot)\rav\bar{\rty}_n(x,u;\cdot)|_{I_{u,x}}$ satisfies
\eqref{local}; in addition, the corresponding pair $(y(\cdot)|_{[0;n]},u(\cdot))$ is a control process on $[0;n]$.

Indeed,  note that   $(t,y(t))\in \mathbb{G}_\Upsilon$ and $\|\rty(y(t),t,\hjk{u};0)-\hjk{y}(0)\|\leq 2nM\gamma$ hold for all $t\in[0;\tau_{u,x}]$ by the definition of $\tau_{u,x}$. Now, by \eqref{1907_} and $x\in\mathbb{X}_\gamma$, the inequality 
$\|\rty(y(t),t,\hjk{u};t)-\hjk{y}(t)\|\leq M\|\rty(y(t),t,\hjk{u};0)-\hjk{y}(0)\|\leq 2nM^2\gamma<1$ also holds. From  the definition of $U_n(t)$, it follows
\begin{align*}
\Big\|
\frac{d}{dt}(y(\tau)-\rty(y(t),t,\hjk{u};\tau))\big|_{\tau=t}\Big\|=\|{f}(t,y(t),u(t))-{f}(t,{y}(t),\hjk{u}(t))\|\\
\leq (L_{u(t)}(t)+L_{\hjk{u}(t)}(t))\|y(t)-\hjk{y}(t)\|+
\|{f}(t,y(t),u(t))-{f}(t,\hjk{y}(t),u(t))\|\\
<(L_{u(t)}(t)+L_{\hjk{u}(t)}(t))2nM^2\gamma+
\|{f}(t,y(t),u(t))-{f}(t,\hjk{y}(t),u(t))\|
<n
\end{align*}
and $\frac{d}{dt}(\rty(y(t),t,\hjk{u};0)-\hjk{y}(0))\leq Mn$. Now, for almost every $t$, $\frac{d}{dt}\rty(y(t),t,{u};0)=0$ in the case  $u=\hjk{u}(t)$, therefore
the inclusions $u\in\mathcal{U}_\gamma$ and $x\in\mathbb{X}_\gamma$ entail
\begin{align*}
\|y(0)-\rty(y(t),t,\hjk{u};0)\|<
\int_{\{\tau\in[0;t]\,|\,\hjk{u}(\tau)\neq u(\tau)\in\Upsilon_n(\tau)\}} Mn\,dt\leq
Mn\gamma
\end{align*}
and $\|\rty(y(\tau_{u,x}),\tau_{u,x},\hjk{u};0)-\hjk{y}(0)\|\leq \gamma+Mn\gamma< 2nM\gamma$. By the definition it follows that $\tau_{u,x}=n$. Now, the graph of $y|_{I_{u,x}\cap[0;n]}$ is containing in   $\mathbb{G}_\Upsilon$; therefore, $[0;n]\subset I_{u,x}$. Furthermore,  we also obtain $\|\hjk{y}(t)-\rty(x,0,{u};t)\|\leq M\|\hjk{y}(0)-\rty(x,0,{u};0)\|\leq 2nM^2\gamma<{\beta}(n)/2$ for all $t\in[0;n]$. Hence, inequality \eqref{local} has been proved too. Thus, the pair $(y(\cdot)|_{[0;n]},u(\cdot))$ is a control process of the original problem
\eqref{sys0_}--\eqref{sysK_}.
   
	So, $[0;n]\subset I_{u,x}$ and  the point $(n,\bar{\rty}_n(x,u;n))$ lies in $\mathbb{G}_\Upsilon$.
Let $\check{y}$ be the  generated by optimal control $\hjk{u}$
	motion  of  \eqref{sys_x} satisfying $\bar{\rty}_n(x,u;n)=\check{y}(n)$, i.e., $\check{y}(\cdot)=\rty(\bar{\rty}_n(x,u;n),n,\hjk{u};\cdot)$.
	However, $(\bar{\rty}_n,\bar{\rtw}_n)(\check{y}(0),\hjk{u};\cdot)|_{[n;2n]}$ solves \eqref{1789} with 
	$(\bar{\rty}_n,\bar{\rtw}_n)(\check{y}(0),\hjk{u};n)=(\bar{\rty}_n,\bar{\rtw}_n)(x,u;n)$, therefore we obtain \eqref{1907} and $[0;2n]=I_{u,x}$, taking
	\begin{align}
	\label{11301}
	(\bar{\rty}_n,\bar{\rtw}_n)\big(x,u;2n\big)=
	\big(\check{y}(0),J(x,0,{u};n)-J\big(\check{y}(0),0,\hjk{u};n\big)\big),
	\end{align}
	here $\check{y}(\cdot)=\rty(\bar{\rty}_n(x,u;n),n,\hjk{u};\cdot)$.
	
	{\bf Step 7}
	
	Define  the function
	$\bar{H}_n:{\mathbb{X}}\times{\mathbb{R}}\times{\mathbb{R}}\times{\mathbb{R}}\times({\mathbb{X}}\times{\mathbb{R}}\times{\mathbb{R}}\times{\mathbb{R}})^*\times {U}\times[0;2n]\to{\mathbb{R}}$
	by the rule
	\[\bar{H}_n(r,w,x,s,\mu_r,{\mu}_w,\mu_y, \mu_s,u,t)\rav \mu_r1_{u\neq\hjk{u}(t)}+\mu_w\bar{f}_{0,n}\big(t,x,u\big)+\mu_y \bar{f}_n\big(t,x,u\big)\]
	for all
	$(r,w,x,s,\mu_r,{\mu}_w,\mu_y, \mu_s,u,t)\!\in\!{\mathbb{X}}\times{\mathbb{R}}\times{\mathbb{R}}\times{\mathbb{R}}\times({\mathbb{X}}\times{\mathbb{R}}\times{\mathbb{R}}\times{\mathbb{R}})^*\times {U}\times[0,2n].$
	Note that, due to $(H4)$,  this map is strictly  differentiable in $(r,w,x,s)$  if $x=\hjk{y}(t)$,   $u=\hjk{u}(t)$. In particular, one has
	\begin{subequations}
	\begin{align}   
	\frac{\partial \bar{H}_n}{\partial (r,w,s)}(r,w,x,s,{\mu}_r,{\mu}_w,\mu_y,{\mu}_s,u,t)&=(0,0,0)
	\label{2061}
	\end{align}
	for all $ t\in[0;2n]$ and 
	\begin{align}
	\frac{\partial \bar{H}_n}{\partial x}(r,w,x,s,0,{\mu}_w,\mu_y,0,\hjk{u}(t),t)&=-\frac{\partial \bar{H}_n}{\partial x}(r,w,x,s,0,{\mu}_w,\mu_y,0,u,2n-t),
	\label{2063}\\
	\frac{\partial H}{\partial x}(x,\mu_y,\hjk{u}(t),\lambda,t)&=\frac{\partial \bar{H}_n}{\partial x}(r,w,x,s,0,-\lambda,\mu_y,0,\hjk{u}(t),t),
	\label{2062}\\
	\label{2060}   
	H(x,\mu_y,u,\lambda,t)&= \bar{H}_n(r,w,x,s,0,-\lambda,\mu_y,0,u,t)
	\end{align}   
	\end{subequations}	
	for all $t\in[0;n)$ and $\lambda\in\mathbb{R}_+$
	whenever $(r,w,x,s,\mu_r,{\mu}_w,\mu_y,\mu_s,u).$

	 At last, set  $\hjk{\bar{r}}\rav0\in\mathbb{R}$ and $\hjk{\bar{s}}\rav0\in\mathbb{R}.$

	{\bf Step 8}.
	
	Note that 
	\begin{equation}\label{3150}
	W_{\theta}(\hjk{y}(0))=W_{\sup}(\hjk{y}(0))=W_{\inf}(\hjk{y}(0))=0\qquad \forall \theta>0. 
	\end{equation}
	Define  also the functions $\lscsup{\mathcal{\mathcal{C}_{\as}}} W$ and $\lscinf{\mathcal{\mathcal{C}_{\as}}} W$ by rule \eqref{infsupOmega}.
	Set 
		\begin{align*}
\lscextr{\mathcal{C}_{\as}} W\rav\newcorr{\lscsup{\mathcal{C}_{\as}} W}
	\textrm{\ in the proof of Theorem~\ref{9}},\\
	\lscextr{\mathcal{C}_{\as}} W\rav\newcorr{\lscinf{\mathcal{C}_{\as}} W}\textrm{\ in the proof of Theorem~\ref{8}.}
	\end{align*}
\newcorr{Notice that $\lscextr{\mathcal{C}_{\as}} W$ is lower semicontinuous by the definition;
   furthermore,  $(E'''_{\textrm{extr}})$ yields $(\lscextr{\mathcal{C}_{\as}} W)(\hjk{y}(0))=W_{\textrm{extr}}(\hjk{y}(0))=0$.}

	Consider a  $x\in\mathcal{C}_0\cap \mathbb{X}_\gamma$ and  $u\in \mathcal{U}_\gamma\subset\mathcal{U}_n$. By $y(\cdot)$ 
	denote the generated by control ${u}$ motion of \eqref{sys_x} with $x=y(0)$.
	Now, $(y|_{[0;n]},u)$ is a control process and satisfies
	 \eqref{local} by  $\gamma<{\beta}(T)/2$. 
	 Consider also  the generated by optimal control $\hjk{u}$
	motion $\check{y}$   of \eqref{sys_x} satisfying $y(n)=\check{y}(n)$. Now, $y(t)=\check{y}(t)$ for all $t\geq n$; in particular, $\check{y}(0)\in\mathcal{C}_{\as}$ iff $\Limsup_{\theta\uparrow\infty}\{\Lambda(\theta,y(\theta)\}\subset\mathcal{C}_{\infty}$ holds and $J(x,0,u;t)$ is finite for all positive $t$. Since,
	by \eqref{11301}, one has
	$\check{y}(0)=\bar{\rty}_n(y(0),{u};2n)$, we show that
	$(y,u)$ is an admissible control process iff
	$\bar{\rty}_n(y(0),{u};2n)$ 
	lies in $\mathcal{C}_{\as}$.

	Assume that $\bar{\rty}_n(y(0),{u};2n)\in\mathcal{C}_{\as}$.
	In this case, by definition of $J$, for the admissible control process $(y,u)$, we obtain 
	\begin{align*}
	J(y(0),0,u;\theta)\!-\!\hjk{J}(\hjk{y}(0);\theta)
	&\;=\;J(y(0),0,u;n)+J(\check{y}(n),n,\hjk{u};\theta)\!-\!\hjk{J}(\hjk{y}(0);\theta)\\
	&\;=\;J(x,0,u;n)\!-\!\hjk{J}(\check{y}(0);n)+\hjk{J}(\check{y}(0);\theta)\!-\!\hjk{J}(\hjk{y}(0);\theta)\\
	&\ravref{11301}\bar{\rtw}_n(x,{u};2n)+J\big(\bar{\rty}_n(x,{u};2n),0,\hjk{u};\theta\big)\!-\!\hjk{J}\big(\hjk{y}(0);\theta\big)
	\end{align*}
	for all $\theta>n$.  Passing to the corresponding limit as $\theta\uparrow\infty$, we have
	\begin{align*}
		l(\hjk{y}(0))
		\leq 
		l(y(0))\!+\!\bar{\rtw}_n(x,{u};2n)\!+\!W_{\sup}(\bar{\rty}_n(x,{u};2n))\textrm{\;if\;}(\hjk{y},\hjk{u})&\textrm{\;is\;weakly}\textrm{\;\;\;\;\;overtaking\;optimal,}\\
		l(\hjk{y}(0))
		\leq
		 l(y(0))\!+\!\bar{\rtw}_n(x,{u};2n)\!+\!W_{\inf}(\bar{\rty}_n(x,{u};2n))\textrm{\;if\;}(\hjk{y},\hjk{u})&\textrm{\;is\;overtaking\;optimal.}
	\end{align*}
	In the proof of Theorem~\ref{9}, as well as the proof of Theorem~\ref{8}, we obtain
	\begin{align*}
	l(\hjk{y}(0))\leq l(y(0))+\bar{\rtw}_n(y(0),{u};2n)+W_{\textrm{extr}}(\bar{\rty}_n(y(0),{u};2n))
	\end{align*}
	for all control processes $(y,u)$ satisfying  $u\in \mathcal{U}_\gamma$, $\bar{\rty}_n(y(0),{u};2n)\in\mathcal{C}_{\as}$, and  $y(0)\in\mathcal{C}_0\cap \mathbb{X}_\gamma$.
	Further, the infimum of the right side of this inequality is attained  at $(y,u)=\big(\hjk{y},\hjk{u}\big)$ and is equalled to $l(\hjk{y}(0))$.
		So, the point $\big(\hjk{y}(0),\hjk{w}(2n),\hjk{\bar{y}}(2n))=(\hjk{y}(0),0,\hjk{y}(0)\big)$ has to
	\[\textrm{ minimize\ } l(x_0)+w+W_{\textrm{extr}}(x_1)\ \textrm{subject to}\ (x_0,w,x_1)\in\mathcal{Q}_{n}, x_1\in\mathcal{C}_{\as}.\]
	Here,
	\[\mathcal{Q}_{n}\rav\big\{\big(x_0,\bar{\rtw}_n(x_0,u;2n),\bar{\rty}_n(x_0,u;2n)\big)\in{\mathbb{X}_\gamma\times\mathbb{R}\times\mathbb{X}\mid
	u\in\mathcal{U}_\gamma,\ x_0\in\mathcal{C}_0}\cap\mathbb{X}_\gamma\big\}.\]
	
	
	\newcorr{Besides $(\lscextr{\mathcal{C}_{\as}} W)(\hjk{y}(0))=W_{\textrm{extr}}(\hjk{y}(0))=0$ holds true, as noted above, and $\lscextr{\mathcal{C}_{\as}} W$ is a lower semicontinuous envelope of $W_{\textrm{extr}}+i_{{\mathcal{C}_{\as}}}$. This guarantees that the point
	$$\big(\hjk{y}(0),\hjk{w}(2n),\hjk{\bar{y}}(2n))=(\hjk{y}(0),0,\hjk{y}(0)\big)\in\mathcal{Q}_{n}$$ is a $\gamma'$-local	{\cite[Section~22.6]{ClarkeNew}} minimizer for}
	\[\newcorr{\textrm{minimize\ } l(x_0)+w+(\lscextr{\mathcal{C}_{\as}} W)(x_1)\ 
	\textrm{\ subject to\ }	(x_0,w,x_1)\in\mathcal{Q}_{n}
	}\]\newcorr{for a sufficiently small positive $\gamma'.$ Decreasing $\gamma$ if necessary, we can assume that $\gamma\leq \gamma'.$}

		By the definition of $\mathcal{Q}_{n}$, for  each control process 
	$(y,{u})$ with $u\in\mathcal{U}_\gamma$ and  $||y-\hjk{y}||_{C([0,n];\mathbb{X})}<\gamma$,  the process $\big(({\bar{\rty}}_n,{\bar{\rtw}}_n)(y(0),{u};\cdot);{u}(\cdot)\big)$ 
	satisfies $(y(0),({w},y)(2n))\in\mathcal{Q}_{n}$. Since  $s\geq (\lscextr{\mathcal{C}_{\as}} W)(x)$  for all $(x,s)\in\epi (\lscextr{\mathcal{C}_{\as}} W)$, 
	the process $\big(\hjk{\bar{y}}_n,\hjk{\bar{w}}_n,\hjk{{s}}=0,\hjk{u}\big)$ is a $\gamma$-local optimal solution 
	to the following problem
	\begin{align*}
	\textrm{minimize\ }&  l(\bar{y}(0))+\bar{w}(2n)+s(2n)\\
	\textrm{subject to\ }&\frac{d\bar{y}(t)}{dt}=\bar{f}_n(t,\bar{y}(t),u(t)),\ 
	\frac{d\bar{w}(t)}{dt}=\bar{f}_{0,n}(t,\bar{y}(t),u(t)),\ \frac{ds(t)}{dt}=0,
	\\
	&\bar{w}(0)=0,\ \bar{y}(0)\in\cl\mathcal{C}_0,\ (\bar{y},s)(2n)\in\epi (\lscextr{\mathcal{C}_{\as}} W),\ u\in\mathcal{U}_n, \\
	&\meas \{\tau\in[0;n]\mid u(\tau)\neq\hjk{u}(\tau)\}\leq \gamma.
	\end{align*}
	Notice that the map $t\mapsto\meas\{\tau\in[0;t]\mid u(\tau)\neq\hjk{u}(\tau)\}$
	is the solution to the equation
	$\frac{dr(t)}{dt}=1_{u(t)\neq\hjk{u}(t)}$ with the initial condition $r(0)=0$. Now,  
	the process $\big(\hjk{\bar{y}}_n,\hjk{\bar{w}}_n,\hjk{{s}}=0,\hjk{{r}}=0,\hjk{u}\big)$ is a $\gamma$-local
		minimizer 
	for 
	\begin{align*}
	\textrm{minimize\ }&  l(\bar{y}(0))+\bar{w}(2n)+s(2n)\\
	\textrm{subject to\ }&\frac{d\bar{y}(t)}{dt}=\bar{f}_n(t,\bar{y}(t),u(t)),\ 
	\frac{d\bar{w}(t)}{dt}=\bar{f}_{0,n}(t,\bar{y}(t),u(t)),\\
	&\frac{dr(t)}{dt}=1_{u(t)\neq\hjk{u}(t)},\ \frac{ds(t)}{dt}=0,\  
	\\
	&(r,\bar{w})(0)=0,\ \bar{y}(0)\in\mathcal{C}_0,\  (\bar{y},s)(2n)\in\epi (\lscextr{\mathcal{C}_{\as}} W),\ u\in\mathcal{U}_n.
	\end{align*}
	\newcorr{At last, since  $\lscextr{\mathcal{C}_{\as}} W$ is lower semicontinuous at $\hjk{y}(0)$, in this problem the set $\mathcal{C}_0$ can be replaced by $\cl\mathcal{C}_0$; so,  $\big(\hjk{\bar{y}}_n,\hjk{\bar{w}}_n,\hjk{{s}}=0,\hjk{{r}}=0,\hjk{u}\big)$ is a $\gamma$-local	minimizer for}
  \begin{align*}
	\newcorr{\textrm{minimize\ }}&  \newcorr{l(\bar{y}(0))+\bar{w}(2n)+s(2n)}\\
	\newcorr{\textrm{subject to\ }}&\newcorr{\frac{d\bar{y}(t)}{dt}=\bar{f}_n(t,\bar{y}(t),u(t)),\ 
	\frac{d\bar{w}(t)}{dt}=\bar{f}_{0,n}(t,\bar{y}(t),u(t)),}\\
	&\newcorr{\frac{dr(t)}{dt}=1_{u(t)\neq\hjk{u}(t)},\ \frac{ds(t)}{dt}=0,\ }  
	\\
	&\newcorr{(r,\bar{w})(0)=0,\ \bar{y}(0)\in\cl\mathcal{C}_0,\  (\bar{y},s)(2n)\in\epi (\lscextr{\mathcal{C}_{\as}} W),\ u\in\mathcal{U}_n.}
	\end{align*}
Note that $\bar{H}_n$ is the
Hamilton--Pontryagin function to this problem.
	
	{\bf Step 9}.

	Now we may apply the Pontryagin Maximum Principle \cite[Theorem~22.26]{ClarkeNew}, because its assumptions \cite[Hypothesis 22.25]{ClarkeNew} were  implied from $(H2)$ and $(H3)$ in Step 5. Furthermore, due to $(H4)$ we may look for co-state arc among solutions to the adjoint equation instead of the adjoint inclusion.

	Due to \cite[Theorem~22.26]{ClarkeNew}, there exist   a number $\lambda_n\in\{0,1\}$ and a co-state arc $(\mu_r,{\mu}_y,\mu_w, \mu_s)\in C([0;2n],(\mathbb{R}\times\mathbb{X}\times\mathbb{R}\times\mathbb{R})^*)$  with  
	\[(\mu_r(t),{\mu}_y(t),\mu_w(t),\mu_s(t),\lambda_n)\neq 0 \qquad \forall t\in[0;2n]\]
	such that the following transversality conditions hold
	\begin{align*}   
	&(\mu_r,{\mu}_w,\mu_y, \mu_s)(0)\!\in\!\{(0,0)\}\times\lambda_n \partial l(\hjk{\bar{y}}(0))\times\{0\}+\mathbb{R}\times\mathbb{R}\times N(\hjk{\bar{y}}(0);\cl\mathcal{C}_0)
	\times\{0\},\\
	-&(\mu_r,{\mu}_w,\mu_y, \mu_s)(2n)\!\in\!\lambda_n(0,1,0,1)+\{(0,0)\}\times N\big((\hjk{\bar{y}},\hjk{s})(2n);\epi(\lscextr{\mathcal{C}_{\as}} W)),
	\end{align*}
	and such that the co-state arc $(\mu_r,{\mu}_w,\mu_y,\mu_s)$ satisfies the adjoint equation: 
	\begin{multline}\label{inc}
	-\frac{d(\mu_r,\mu_w,\mu_y,\mu_s)}{dt}(t)=\frac{\partial\bar{H}_n}{\partial(r,w,x,s)}\big(\hjk{{r}}(t),\hjk{\bar{w}}_n(t),\hjk{\bar{y}}_n(t),\hjk{{s}}(t),\mu_r(t),\mu_w(t),\mu_y(t),\mu_s(t),\hjk{u}(t),t\big),
	\end{multline}
	as well as  the Pontryagin maximum condition on $[0;2n]$:
	\begin{equation}\label{mH}
	\hjk{u}(t)\in \arg\max_{\upsilon\in \Upsilon_n(t)}\bar{H}_n\big(\hjk{{r}}(t),\hjk{\bar{w}}_n(t),\hjk{\bar{y}}_n(t),\hjk{{s}}(t),\mu_r(t),\mu_w(t),\mu_y(t),\mu_s(t),\upsilon,t\big).
	\end{equation}
	
	Note that, by \eqref{inc} and \eqref{2061},
	functions $\mu_r$,
	$\mu_w$, and $\mu_s$ are constant, therefore, one has $\mu_s\equiv 0$ and $\mu_r\equiv 0$ by
	 the first  and second transversality conditions respectively. Further, by
	the symmetry of $(\bar{f}_{n},\bar{f}_{0,n})$ and $\bar{H}_n$
	in $t$ (see \eqref{2063}),
	the co-state arc $\mu_y|_{[0;n]}$ as a generated by $(\hjk{\bar{y}},\hjk{u})$ solution to \eqref{inc} satisfies $\psi_n(0)=\mu_y(0)=\mu_y(2n)$.

	Then, according to   $\psi_n(0)=\mu_y(0)=\mu_y(2n)$, $\hjk{\bar{y}}(0)=\hjk{\bar{y}}(2n)=\hjk{y}(0)$, and  $\hjk{r}=\hjk{s}=0$, we may rewrite the transversality conditions as follows:
	\begin{align*}   
	\mu_y(0)&\in\lambda_n \partial l(\hjk{y}(0))+ N(\hjk{y}(0);\cl\mathcal{C}_0),\\
	-({\mu}_w,\mu_y,0)(0)
	&\in
	\lambda_n(1,0,1)+\{0\}\times N\big(\hjk{y}(0),0;\epi(\lscextr{\mathcal{C}_{\as}} W))).
	\end{align*}
	Set $\psi_n=\mu_y|_{[0;n]}$, then
	the first transversality condition entails  \eqref{400}, 
	the second transversality condition leads to $\lambda_n=-\mu_w$ and
	\begin{equation}   
	-(\psi_n(0),\lambda_n)\in N\big(\hjk{y}(0),0;\epi(\lscextr{\mathcal{C}_{\as}} W)),\qquad\label{2982}
	\end{equation}
	at the same time \eqref{inc} yields \eqref{sys_psi}  on $[0;n]$ by \eqref{2062}.
	Further, for every $t\geq 0$, since $({\mu}_r(t),{\mu}_w(t),\mu_y(t),\mu_s(t),\lambda_n)=(0,-\lambda_n,\psi_n(t),0,\lambda_n)$ is nonzero, 
	the pair $(\psi_n(t),\lambda_n)$ is also nonzero.
	
	Note that $\hjk{\bar{y}}_n|_{[0;n]}= \hjk{y}|_{[0;n]}$ holds. Then, by \eqref{2060},
	\begin{align*}   
	\bar{H}_n\big(\hjk{{r}}(t),\hjk{\bar{w}}_n(t),\hjk{\bar{y}}_n(t),\hjk{{s}}(t),\mu_r(t),\mu_w(t),\mu_y(t),\mu_s(t),\upsilon,t\big)
	={H}\big(\hjk{y}(t),\psi_n(t),\upsilon,\lambda_n,t\big)
	\end{align*}
	holds
	for almost all $t\in[0;n]$ and all $\upsilon\in \Upsilon_n(t)$. Therefore, 
	the Pontryagin maximum condition \eqref{mH} on $[0;n]$ implies \eqref{937} with $\psi_n$ for all $\upsilon\in\Upsilon_n(t)$ and almost all $t\in[0;n]$.

	Thus, conditions \eqref{400} and \eqref{937} have also been proved
	and we need to  consider only one of transversality conditions among
	 \eqref{WAKKa}--\eqref{WAKKd} (in the proof of Theorem~\ref{9}) or \eqref{AKKa}-\eqref{AKKc} (in the proof of Theorem~\ref{8}).

	To prove Theorem~\ref{8}, note that under the corresponding assumption the required inclusion among \eqref{AKKa}--\eqref{AKKc} for \eqref{2982} holds true by one of the relations \eqref{a:0cap}--\eqref{c:0cap} in Lemma~\ref{cap}.
	Theorem~\ref{8} has been proved. 
	
	To prove Theorem~\ref{9}, note that, 	since $W_{\theta}(\hjk{y}(0))=W_{\sup}(\hjk{y}(0))=0$ for all positive $\theta$ by \eqref{3150},  one of the relations \eqref{a:0co}--\eqref{d:0co} in  Lemma~\ref{co} with the corresponding additional assumption
	entails the required  inclusion among \eqref{WAKKa}--\eqref{WAKKd} for \eqref{2982}.
	Theorem~\ref{9} has been proved.

		\begin{remark}\label{nonconvex}
		Note that in the proofs of Theoremae~\ref{9} and \ref{8}, under conditions $(H0)$--$(H4)$ and $(E_{\textrm{extr}})$, we deduce \eqref{2982}: 
		\begin{equation}   \label{2275}
	-(\hjk{\psi}(0),\hjk{\lambda})\in N\big(\hjk{y}(0),0;\epi\lsc\limsup_{\theta\uparrow\infty}\big(\hjk{J}(\cdot;\theta)-\hjk{J}(\hjk{y}(0);\theta)+\imath_{\mathcal{C}_{\as}}\big)\big)
	\end{equation}
	in the case of weakly overtaking optimal process $(\hjk{y},\hjk{u})$ as well as 
		\begin{equation}   \label{2277}
	-(\hjk{\psi}(0),\hjk{\lambda})\in N\big(\hjk{y}(0),0;\epi\lsc\liminf_{\theta\uparrow\infty}\big(\hjk{J}(\cdot;\theta)-\hjk{J}(\hjk{y}(0);\theta)+\imath_{\mathcal{C}_{\as}}\big)\big)
	\end{equation}
	in the case overtaking optimal process $(\hjk{y},\hjk{u})$. 
		 Conditions \eqref{2275} and \eqref{2277} are  very awkward for applications, but can be stronger than \eqref{WAKKa} and \eqref{AKKa} respectively.
	\end{remark}

	\section*{Some questions instead of concluding remarks}
	The key feature of this paper is the boundary conditions~\eqref{WAKKa} and \eqref{AKKa}, necessary for the weakly overtaking criterion and the overtaking criterion without any {\it a priori} assumptions on asymptotics. 
	In addition, in these necessary conditions the optimal control is not assumed to be bounded, the behavior of the dynamics and the integrand with respect to $t$ and $u$ is measurable, not necessarily continuous. The  condition obtained in Corollary~\ref{55}  cuts out the unique co-state arc under assumption~\eqref{4004};  however, one would like to have asymptotic assumptions that guarantee such uniqueness but are easier to test than~\eqref{4004}. Moreover, although  formula~\eqref{WAKKa}, as an analog of the Clarke subdifferential, matches in its form the necessary conditions of optimality 
	that are customary in the  variational analysis, it is only convenient to apply it to simple models (see~Example~\ref{ggg}). 
	So, this condition needs to be improved to the level of Proposition~\ref{done}. Besides, any simple assumptions making these transversality	conditions 	
	sufficient would be useful.

	\section*{Acknowledgements}
	I would like to express my gratitude
	to Boris Mordukhovich and Alexander Kruger for a valuable discussion in
	during the writing this article.

\end{document}